\def\namedlabel#1#2{\begingroup
    #2%
    \def\@currentlabel{#2}%
    \phantomsection\label{#1}\endgroup
}
\newcommand\Sc {\mathcal{S}}
\newcommand\Nc {\mathcal{N}}
\newcommand\Norm {\mathcal{N}}
\newcommand\E {\mathbb{E}}
\newcommand\Pbb {\mathbb{P}}
\newcommand\R {\mathbb{R}}
\newcommand\Var {\ensuremath{\operatorname{\mathrm{Var}}}}
\newcommand\eps {\varepsilon}
\newcommand\tr {\ensuremath{\operatorname{\mathrm{tr}}}}
\newcommand\diag {\ensuremath{\operatorname{\mathrm{diag}}}}
\newcommand\uptok {{0:k}}
\newcommand\ktoinf {{k:\infty}}
\DeclareMathOperator*{\argmin}{argmin}
\newenvironment{proofsketch}{\par\noindent{\bf Proof sketch\ }}{\hfill\BlackBox\\[2mm]}
\author{\name Alexander Tsigler \email alexander\_tsigler@berkeley.edu\\
 \addr Department of Statistics\\
  University of California, Berkeley\\
  367 Evans Hall, Berkeley, CA 94720-3860\\
 \AND
 \name Peter L.~Bartlett  \email peter@berkeley.edu\\
 \addr Departments of Statistics and Computer Science\\
  University of California, Berkeley and Google Research\\ 367 Evans Hall, Berkeley, CA 94720-3860
}
\begin{document}
\title{Benign overfitting in ridge regression}
\maketitle
\begin{abstract}%
\iffalse
{\color{blue} Old}

Classical learning theory suggests that strong regularization is needed to learn a class with large complexity. This intuition is in contrast with the modern practice of machine learning, in particular learning neural networks, where the number of parameters often exceeds the number of data points. It has been observed empirically that such overparameterized models can show good generalization performance even if trained with vanishing regularization. The aim of this work is to understand theoretically how this effect can occur, by studying the setting of ridge regression. We provide non-asymptotic generalization bounds for overparameterized ridge regression that depend on the arbitrary covariance structure of the data, and show that those bounds are tight for a range of regularization parameter values.
We identify when small or negative regularization is sufficient for obtaining small excess risk. On the technical side, our bounds only require the data vectors to be i.i.d. sub-gaussian, while most previous work assumes independence of the components of those vectors.

{\color{blue} New}
\fi
In many modern applications of deep learning the neural network has many more parameters than the data points used for its training. Motivated by those practices, a large body of recent theoretical research has been devoted to studying overparameterized models. One of the central phenomena in this regime is the ability of the model to interpolate noisy data, but still have test error lower than the amount of noise in that data. \cite{benign_overfitting} characterized for which covariance structure of the data such a phenomenon can happen in linear regression if one considers the interpolating solution with minimum $\ell_2$-norm and the data has independent components: they gave a sharp bound on the variance term and showed that it can be small if and only if the data covariance has high effective rank in a subspace of small co-dimension. We strengthen and complete their results by eliminating the independence assumption and providing sharp bounds for the bias term. Thus, our results  apply in a much more general setting than those of \cite{benign_overfitting}, e.g., kernel regression, and not only characterize how the noise is damped but also which part of the true signal is learned.  Moreover, we extend the result to the setting of ridge regression, which allows us to explain another interesting phenomenon: we give general sufficient conditions under which the optimal regularization is negative. 

\end{abstract}
\begin{keywords}
 ridge regression, overparameterization, interpolation, generalization, concentration inequalities, high-dimensional probability.
\end{keywords}
%\tableofcontents

\section{Introduction}
\label{sec::introduction}
\subsection{Motivation and our contribution}
The bias-variance tradeoff is well known in statistics and machine learning. The classical theory suggests that large models overfit the data and that one needs significant regularization to make them generalize. This intuition is, however, in contrast with the empirical study of modern machine learning techniques. It was repeatedly observed that even models with enough capacity to exactly interpolate the data can generalize with little regularization, or no regularization at all \citep{Belkin15849, ZhangBHRV16}. In some cases, the best value of the regularizer can be zero \citep{Liang2018JustIK} or even negative \citep{Kobak2020OptimalRP} for such models. 

The aim of this paper is to provide a theoretical understanding of these phenomena, and to do that we consider %perhaps the simplest statistical model 
one of the simplest settings
in which they can be observed---ridge regression in dimension $p$ with $n < p$ i.i.d. noisy observations. Despite being a classical statistical %model
methodology, ridge regression and its ridgeless limit are still not completely understood in such a regime: when $n < p$ classical theory suggests that the regularization parameter should be large enough to provide additional capacity control (see, e.g., \cite{Hsu_random_design_ridge} and references therein).  The basis of our work was set by  \cite{benign_overfitting}, who studied  the variance term for ridgeless regression with $n < p$ under the additional assumption that the data vectors have independent components. The main discovery of their work is that the variance term can be small if and only if there exists $k^* \ll n$ such that if one removes the first $k^*$ largest eigenvalues of the covariance operator, the remaining tail of the sequence of eigenvalues  has large effective rank compared to $n$. In our work we start afresh and use the same separation of eigendirections from the very beginning, which allows us to substitute the independence assumption by a weaker assumption on the condition number of the Gram matrix of the tails of the data vectors. Moreover, we show how the same separation of the eigenvalues gives tight bounds for the bias term too. Finally, by virtue of algebra, our argument extends very easily to the setting of ridge regression, which allows for comparison with the above mentioned classical results and investigation of the case when the regularization is even less than zero. We show that we extend  (with different constants) the results of \cite{Hsu_random_design_ridge} to a larger range of regularization parameters, and give general conditions under which negative regularization is optimal and can provide arbitrarily high multiplicative gain in excess risk. 

The structure of the paper is the following: in Section~\ref{sec::related work}, %subsection \ref{sec::related work}
we provide an overview of the field of overparameterized ridge regression. We postpone a more technical overview to Section \ref{sec::detailed comparison}, where we also explain how our paper relates to other works. We start the presentation of our results with introducing the setting of ridge regression in Section \ref{sec::ridge setup main body}. After that, we use Section \ref{sec::separation story} to introduce the separation of eigendirections and define the relevant important objects: Subsection  \ref{sec::regimes of linear regression} shows two simple sketches aimed at building up intuition, Subsection \ref{sec::benign overfitting story} explains the results of \cite{benign_overfitting} in terms of that intuition and Subsection \ref{sec::our contribution intuition} explains how our work completes the story. 
%We find this discussion very instructive and helpful to understand 
The aim of this discussion is to elucidate
the meaning behind the rigorous assumptions and results that we show in Section \ref{sec::main result main body}. Then Section \ref{sec:assumption on A_k} provides a more technical discussion of the main assumption. Section \ref{sec::proof structure main body} provides an outline of the proof and explains where it uses the assumption that the data is sub-Gaussian. 
%Section \ref{sec::alternative form of the bound} shows how an alternative form of the main bound arises as a side product of the proof, and \pbnote{`allows to make' sounds like French translated to English.}allows us to prove some useful corollaries and compare the bound to other results.
In Section~\ref{sec::alternative form of the bound} we note that as a side product of the proof an alternative form of the main bound arises, which makes it convenient to compare our bounds to the results of other papers.
In Section \ref{sec::negative regularization main body}, we derive the sufficient conditions for optimality of negative regularization. Finally, we conclude the paper with Section \ref{sec::conclusions}.

\subsection{Related work}
\label{sec::related work}

Motivated by the empirical success of overparametrized models, there has recently been a flurry of work aimed at  understanding theoretically whether the corresponding effects can be seen in overparametrized linear regression; see, e.g.,  \citep{Liang2019OnTR, Muthukumar2019HarmlessIO, Belkin2019TwoMO, Bibas,  Nakkiran2019MoreDC, NIPS2019_8753, zhou2021uniform, pmlr-v119-negrea20a} and other references in this section. 

The results that aim at characterizing the generalization performance of linear methods can be split roughly into three categories.
The first category is results that give exact expressions of the excess risk in the asymptotic setting with ambient dimension and the number of data points going to infinity, while their ratio goes to a constant, and the spectral density of the covariance operator converges weakly to some limiting distribution \citep{Dobriban2015HighDimensionalAO, Hastie2019SurprisesIH, wu2020optimal, richards2020asymptotics}. 

The second category is results that make strong assumptions on the distribution of  data (e.g., that data vectors have i.i.d.\ components or come from a uniform distribution on a sphere) and derive bounds on excess risk of  linear regression with some specific features, or kernel regression with a kernel that has some specific properties \citep{ montanari2020interpolation, ghorbani2020neural, mei2019generalization, ghorbani2020linearized,liang2020multiple}. Some of these results are also asymptotic, and some are non-asymptotic.  

The third category is results that prove non-asymptotic bounds depending on the arbitrary structure of the covariance of the data. This is the category to which this paper belongs.  We already mentioned the work of \cite{benign_overfitting}. The other works in this category are  \citep{Kobak2020OptimalRP}, \citep{chinot2021robustness}, \citep{Derezinski} and \citep{derezinski2020precise}.

 We provide more detailed comparison and discuss some technical aspects in Section \ref{sec::detailed comparison}.

%\subsection{Subsequent work}
There have been many related works since the arXiv version of this paper ~\citep{tb-borr-20} was posted  \citep{mei2021generalization, mei2021learning, ghosh2021stages, misiakiewicz2021learning, bmr-dlsp-21, celentano2021minimum, JMLR:v22:20-603, narang2021classification, mcrae2021harmless, shamir2022implicit, koehler2021uniform, bunea2022interpolating} etc. 
 \citet{hastie2020surprises} obtained a finite sample version of the asymptotic results of the old version of their paper \citep{Hastie2019SurprisesIH}. In Section~\ref{sec::comparison with Hsu and Hastie}  we provide an explicit comparison with our results. More recently, \citet{mei2021generalization} obtained generalization bounds for kernel ridge regression under similar assumptions to those we consider here (see their Assumption 1). \citet{koehler2021uniform} used the idea of separating the firs $k$ eigendirections of the covariance to study excess risk of minimum norm interpolators with arbitrary norms and Gaussian data. \citet{bmr-dlsp-21} obtained results which belong to the intersection of the first and the second categories which we described in Section \ref{sec::related work} (see their 
Theorem 4.1). \citet{shamir2022implicit} constructed an example of a misspecified setting (i.e., the noise is not independent from the data) in which our results don't hold even though the condition number of the matrix $A_k$ is a constant (see their Example 1).

\section{Ridge regression setup}
\label{sec::ridge setup main body}
The learning problem we consider is ridge regression. Its goal is to learn an unknown real-valued function on $\R^p$ given noisy observations of its values in  $n$ points. We operate in the overparameterized regime, i.e., $p > n$. 

\subsection{Covariate model}

We assume that the data set consists of $n$ i.i.d. vectors sampled from some distribution on $\R^p$, whose mean is zero. Throughout the paper $x$ denotes an independent draw from that distribution.
Denote $X \in \R^{n\times p}$ to be the matrix whose rows are the (transposed) data vectors.

Our results depend on the spectrum of the covariance matrix $\Sigma = \E[xx^\top]$. We fix an orthonormal basis in which $\Sigma$ is diagonal:
\begin{equation}
\label{eq::lambdas definition}
\Sigma = \diag(\lambda_1, \lambda_2, \dots, \lambda_p),
\end{equation}
where $\lambda_1 \geq \lambda_2 \geq \dots \geq \lambda_p$ is the non-increasing sequence of eigenvalues of $\Sigma$.

We assume sub-Gaussianity: denote $Z := X\Sigma^{-1/2}$ (whitened data matrix). Rows of $Z$ are isotropic centered i.i.d. random vectors. We assume that rows of $Z$ are sub-Gaussian with sub-Gaussian norm $\sigma_x$ as defined in Appendix \ref{sec::sub-Gaussianity appendix}. 

Sub-Gaussianity is a classical assumption, which provides a convenient framework for controlling deviations of various quantities of interest (see \cite{vershynin_hdp} for an introduction). We discuss whether it is actually needed in Section \ref{sec::role of sub-Gaussianity}.

\subsection{Response model}
Denote $y\in \R^n$ to be the vector whose coordinates are noisy measurements of the values of an unknown function in the corresponding data points. We assume that the true function is linear with coefficients $\theta^*\in \R^p$, i.e., 
\[
y = X \theta^* + \eps,
\]
where $\eps$ is the noise vector. We assume that  components of $\eps$ are i.i.d. centered random variables with variance $v_\eps^2$. 

\subsection{Learning procedure}
Ridge regression with regularization parameter $\lambda$ is a classical learning algorithm that estimates $\theta^*$ from $X, y$ according to the following formula:
\[
\hat{\theta}( y) := X^\top(X X^\top + \lambda I_n)^{-1}y.
\]
See Appendix \ref{sec::ridge appendix} for a discussion. The matrix $\lambda I_n + XX^\top$ will play an important role in our analysis, so we denote
\[
A := \lambda I_n + XX^\top.
\]
In the ridgeless case ($\lambda = 0$), $A$ is the Gram matrix of the data. Ridge regularization shifts all its eigenvalues by $\lambda$.

\subsection{Excess risk and its bias-variance decomposition}
The quantity of interest is excess risk
that we define in the following way: recall that $x$ is a new data point from the same distribution as rows of $X$. The error that our predictor incurs on this data point is $x^\top(\hat{\theta}(y) - \theta^*)$. We define excess risk as the average squared error over the population, i.e.,
\[
\E_x\left[(x^\top(\hat{\theta}(y) - \theta^*))^2\right] = \|\hat{\theta}(y) - \theta^*\|_{\Sigma}^2,
\]
where we define $\|x\|_M := \sqrt{x^\top M x}$ for any positive semi-definite (PSD) matrix $M$ and any vector $x$ of the corresponding dimension.

Note that $\hat{\theta}( y)$ is linear in $y$, which allows us to write
\begin{gather*}
\hat{\theta}( y) = \hat{\theta}(X\theta^*) +  \hat{\theta}( \eps),\\
\E_\eps\left[\|\hat{\theta}(y) - \theta^*\|_{\Sigma}^2\right] = \|\hat{\theta}(X\theta^*) - \theta^*\|_{\Sigma}^2 + \E_\eps\left[\|\hat{\theta}(\eps) \|_{\Sigma}^2\right],\\
\|\hat{\theta}(y) - \theta^*\|_{\Sigma}^2 \leq 2(\|\hat{\theta}(X\theta^*) - \theta^*\|_{\Sigma}^2 + \|\hat{\theta}(\eps) \|_{\Sigma}^2).
\end{gather*}

The term $\|\hat{\theta}(X\theta^*) - \theta^*\|_{\Sigma}^2$ is the error in the noiseless regime; it is caused by rows of $X$ not spanning the whole space and by regularization. The term $\|\hat{\theta}(\eps) \|_{\Sigma}^2$ is the error of learning the zero function from pure noise. One can see that these two terms nicely decouple from each other and can be studied separately. Moreover, note that $\|\hat{\theta}(\eps) \|_{\Sigma}^2$ is a quadratic form in $\eps$. Its expectation scales linearly with $v_\eps^2$ (variance of the noise):
\[
\E_\eps\left[\|\hat{\theta}(\eps) \|_{\Sigma}^2\right] = v_\eps^2\tr(A^{-1} X\Sigma X^\top A^{-1}).
\]
If the noise is sub-Gaussian with sub-Gaussian norm $\sigma_\eps$, then by Lemma \ref{lm::quadratic form concentration} from the appendix for some absolute constant $c$ and any $t > 1$, with probability at least $1-ce^{-n/c}$,
\begin{align*}
\|\hat{\theta}(\eps) \|_{\Sigma}^2 =& \eps^\top A^{-1} X\Sigma X^\top A^{-1}\eps\\
\leq& ct\sigma_\eps^2\tr(A^{-1} X\Sigma X^\top A^{-1}).
\end{align*}

Therefore, both expectation and deviations of the term $\|\hat{\theta}(\eps) \|_{\Sigma}^2$ are controlled by the quantity $\tr(A^{-1} X\Sigma X^\top A^{-1})$. Thus, we define:
\begin{equation}
\label{eq::bias-variance definition}
\begin{array}{rcccl}
B := & \|\hat{\theta}(X\theta^*) - \theta^*\|_{\Sigma}^2 & =&\|(X^\top A^{-1} X - I_p)\theta^*\|^2_{\Sigma}&\text{ --- bias,}\\[1mm]
V := & \E_\eps\left[\|\hat{\theta}(\eps) \|_{\Sigma}^2/v_\eps^2\right] &= &\tr(A^{-1} X\Sigma X^\top A^{-1})&\text{ --- variance.}
\end{array}
\end{equation}

These quantities don't depend on the distribution of the noise. The goal of this paper is to provide sharp non-asymptotic bounds for them.

\section{The story of separating the first $k$ eigendirections and our contribution}
\label{sec::separation story}
\subsection{Essentially high-dimensional linear regression vs. essentially low-dimensional}
\label{sec::regimes of linear regression}
Before we present our results, we develop some intuition by considering two easy scenarios: "essentially low-dimensional" and "essentially high-dimensional". For each scenario we will do an informal computation of the excess risk and give a geometric interpretation. 

\begin{itemize}
\item {\bf  Essentially low-dimensional linear regression.} Consider least squares regression in which  data lives in $\R^k$ and $k \ll n$: $X \in \R^{n \times k}$ with i.i.d. centered rows from a distribution with covariance $\Sigma\in \R^{k\times k}$ and $y = X\theta^* + \eps,$ where $\eps$ has i.i.d. centered components with variances $v_\eps^2$. Our estimator of choice in this regime is OLS:
\[
\hat{\theta} = \arg \min_{\theta} \|X\theta - y\|^2 = \arg \min_{\theta} \|X(\theta - \theta^*) - \eps\|^2
\]
As $\theta$ takes all possible values in $\R^k$, $X(\theta - \theta^*)$ takes all possible values in the span of columns of $X$, which means that
\[
X(\hat\theta - \theta^*) = \Pi_X \eps,
\]
where $\Pi_X$ is the projection on the span of columns of $X$.
This allows us to write the following informal computation, which leads to the classical $k/n$ rate:
\[
v_\eps^2k = \E_\eps\|\Pi_X \eps\|^2 = \E_\eps\|X(\hat\theta - \theta^*)\|^2 =  \E_\eps\left[(\hat\theta - \theta^*)^\top \underbrace{X^\top X}_{\approx n \Sigma}  (\hat\theta - \theta^*)\right],
\]
\[
v_\eps^2 \cdot k/n \approx (\hat\theta - \theta^*)^\top \Sigma (\hat\theta - \theta^*) = \E_{x \sim \Nc(0, \Sigma)} \langle x, \hat\theta - \theta^*\rangle^2.
\]

Here we used the informal transition $\|n^{-1}X^\top X -  \Sigma\|\approx 0$ --- the population covariance matrix is well-approximated by the sample covariance matrix uniformly in all directions.
If $k \ll n$ this results holds with very few additional assumptions (see \citep{10.1093/imrn/rnx067} and references therein). 

What we have obtained is an example of a classical argument:  the training error $\|X(\hat\theta - \theta^*)\|^2$ is a good proxy for the population error $\|\Sigma^{1/2}(\hat\theta - \theta^*)\|^2$ uniformly over all $\hat\theta \in \R^k$, and the model helps eliminate the noise because it gets projected on a subspace of low dimension. The larger the model, the more error comes from the noise. 

Such a result leads to a classical bias-variance trade-off: the larger the model is, the better it can approximate the true dependence, but also the more noise it picks up. A classical cartoon is shown in Figure \ref{fig:learning cartoon}: Figures \ref{fig:underfitting}--\ref{fig:overfitting} show the result of performing least squares regression with features $\{\cos(m\pi x)\}_{m=0}^p$. As the number of features grows, the ability of the model to approximate the signal grows too, but at the cost of increasing sensitivity to the noise. As the number of features approaches  the number of data points (the "interpolation threshold"), this leads to overfitting.

\item {\bf Essentially high-dimensional linear regression.} Now consider linear regression in which $p \gg n$, but with isotropic data: assume that the matrix $X$ has i.i.d. standard normal elements and $y = X\theta_*+ \eps$ where $\eps \sim \Norm(0_n, v_\eps^2I_n)$ --- independent from $X$.  We consider the minimum $\ell_2$-norm interpolating solution:
\[
\hat\theta= \argmin_{\theta \in \R^p: X\theta = y}\|\theta\| = X^\top(XX^\top)^{-1}y = X^\top(XX^\top)^{-1}(X\theta^* + \eps).
\]

According to our definitions of bias and variance from Equation \eqref{eq::bias-variance definition}
with $\lambda=0$,
\begin{align*}
B =& \|\bigl(I_p - X^\top (XX^\top)^{-1}X\bigr) \theta^*\|,\\
V =& \E_\eps\|X^\top (XX^\top)^{-1}\eps\|^2/v_\eps^2 = \tr\bigl(\underbrace{XX^\top}_{\approx p I_n})^{-1}.\\
\end{align*}

Here we see the following:  the matrix $ X^\top (XX^\top)^{-1}X$ is the projection on the span of the data. This is a random $n$-dimensional subspace in $p$-dimensional space. Thus, with high probability $\|X^\top (XX^\top)^{-1}X \theta^*\|^2/\|\theta^*\|^2 \approx n/p$, so the projection only preserves an $n/p$ fraction of the energy of the signal. When it comes to the variance term, we can use the same concentration result for the sample covariance as we did in the low-dimensional case, but for the transposed data matrix, meaning $XX^\top \approx pI_n$. Finishing the computation yields
\[
B \approx (1 - n/p) \|\theta^*\|^2, \quad \E_\eps V \approx  n/p.
\]
We see that the signal is almost not learned at all in this regime (the bias term is close to the full energy of the signal), but  the noise is also damped by the factor $p/n$.

The geometric interpretation is as follows:  if $p \gg n$, the span of $n$ data points is almost orthogonal to $\theta^*$ with high probability. The data just does not measure $\theta^*$ in most directions, so almost the whole signal is lost. On the other hand, despite the noise fully propagating into in-sample predictions, a new data point $x$ is also almost orthogonal to all the old data points with high probability, so those noisy predictions don't influence the prediction in $x$. Overall, despite interpolating the data, we effectively learn a zero estimate out of sample. The zero estimator can be a very good estimator, e.g., if the true signal is zero. This hints at the possibility of learning via high-dimensional interpolation:  the model can use the directions in which the signal is not learned to smear the noise over them. 

The learning cartoon for this regime is given in Figures \ref{fig:isotropic overparameterization}--\ref{fig:benign overfitting}: as the number of cosine features becomes large compared to the number of data points, the learning procedure predicts zero out of sample, despite interpolating the values in sample. However, if we add multiplicative weights to the cosine features, down-weighting higher frequencies, it causes the minimum norm solution to learn the low frequency signal and interpolate the noise using the high frequency components. 

\end{itemize}

\begin{figure}
    \def\figscale{0.48}
     \centering
     \begin{subfigure}[t]{\figscale\textwidth}
         \centering
         \includegraphics[width=\textwidth]{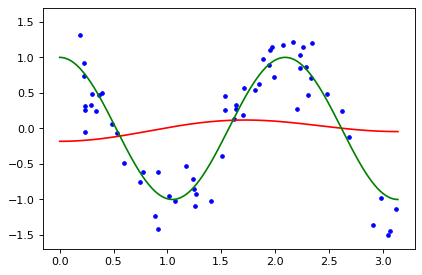}
         \caption{Features $\{\cos(mx)\}_{m=1}^2$: underfitting. A linear combination of features cannot approximate the true dependence.}
         \label{fig:underfitting}
     \end{subfigure}
     \hfill
     \begin{subfigure}[t]{\figscale\textwidth}
         \centering
         \includegraphics[width=\textwidth]{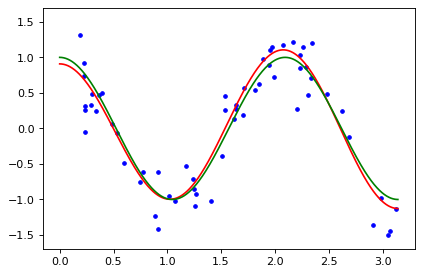}
         \caption{Features $\{\cos(mx)\}_{m=1}^3$: the best fit.  This is the minimum number of features that span the true dependence.}
         \label{fig:best fit}
     \end{subfigure}
     \newline
     \begin{subfigure}[t]{\figscale\textwidth}
         \centering
         \includegraphics[width=\textwidth]{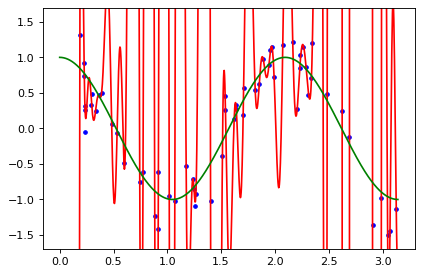}
         \caption{Features $\{\cos(mx)\}_{m=1}^{50}$: overfitting. As the number of features approaches the number of data points, the effect of the noise becomes stronger.}
         \label{fig:overfitting}
     \end{subfigure}
     \hfill
     \begin{subfigure}[t]{\figscale\textwidth}
         \centering
         \includegraphics[width=\textwidth]{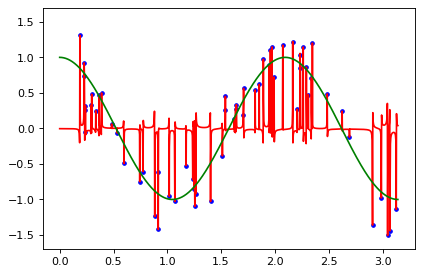}
        \caption{Features $\{\cos(mx)\}_{m=1}^{2000}$: isotropic overparameterization. As the number of cosine features grows above the interpolation threshold, the learned solution goes to zero out of sample. }
        \label{fig:isotropic overparameterization}
    \end{subfigure}
    \newline
    \begin{subfigure}[t]{\figscale\textwidth}
         \centering
         \includegraphics[width=\textwidth]{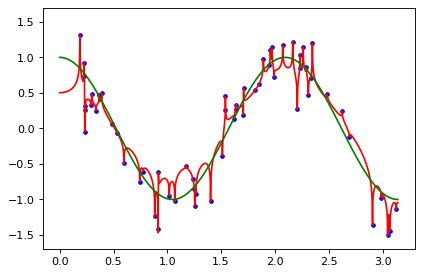}
         \caption{Features $\{\cos(mx)/m\}_{m=1}^{2000}$: benign overfitting. Adding weights to cosine features results in interpolating the noise with high frequency features and learning the signal with low frequency features.}
         \label{fig:benign overfitting}
     \end{subfigure}
     \hfill
     \begin{subfigure}[t]{\figscale\textwidth}
         \centering
         \includegraphics[width=\textwidth]{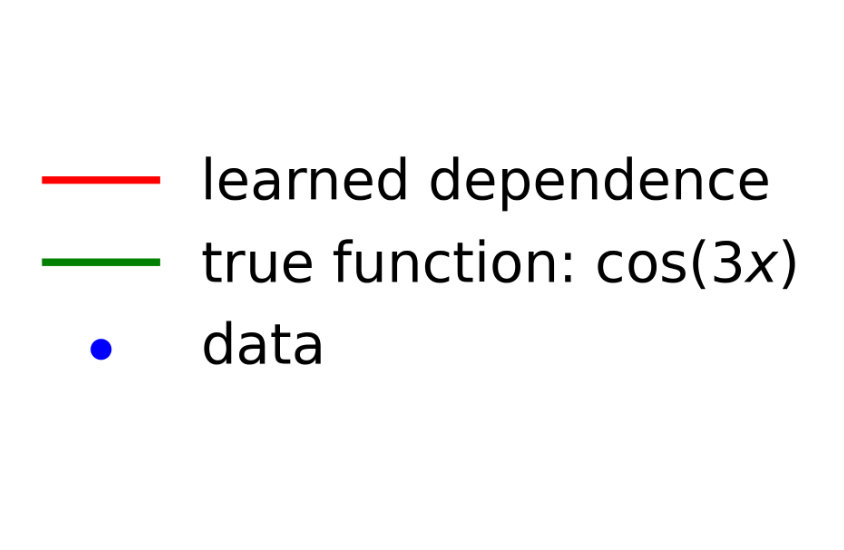}
         \caption{Legend for all the plots.}
         \label{fig:legend}
     \end{subfigure}
    \caption{Learning $\cos(3 x)$ using linear regression with different featurizations. The data points $(x_i, y_i)_{i=1}^{60}$ were generated i.i.d. such that $x_i$ have uniform distribution on $[0, \pi]$ and $y_i$ have normal distribution with mean $\cos(3 x_i)$ and standard deviation $0.4$. The OLS estimator was used when $n$ is larger than the number of features, and the minimum norm interpolating solution was used otherwise.}
    \label{fig:learning cartoon}
\end{figure}

\subsection{The ground provided by the previous work}
\label{sec::benign overfitting story}

 \cite{benign_overfitting} studied  the variance term for ridgeless regression under the additional assumption that the data vectors have independent components. To give an overview of their results, introduce the following quantities: for any $k \in \{0, 1, 2, \dots, p-1\}$ define
 \[
 r_k := \frac{1}{\lambda_{k+1}}\left(\lambda + \sum_{i > k} \lambda_i\right), \quad \rho_k := r_k/n.
 \]
In the ridgeless setting, meaning $\lambda = 0$, $r_0$ is a well-known effective rank of the matrix $\Sigma$,
 $r_k$ is the effective rank of the same matrix, but after restricting it to the span of its last $p-k$ eigenvectors, and $\rho_k$ measures how large that effective rank is compared to the number of data points.  
 
 Given this notation,  \cite{benign_overfitting} defined $k^*$ as the minimum $k$ for which $\rho_k$ is larger than a universal constant. Their result is then that if such a $k^*$ doesn't exist or if $k^*/n$ is at least a constant, then $V$ is lower bounded by a constant. Otherwise, they show that with high probability
 $V$ is equal up to a constant factor to the following quantity:
 \[
 \frac{k^*}{n} + \frac{n\sum_{i > k^*}\lambda_i^2}{\left(\sum_{i > k^*}\lambda_i\right)^2}.
 \]
 
 Inspection of the proof shows that the "essentially low-dimensional" rate $k/n$ comes from the first $k$ components of the vector $\hat\theta(\eps)$ and the term ${\left(n\sum_{i > k^*}\lambda_i^2\right)}/{\left(\sum_{i > k^*}\lambda_i\right)^2}$ comes from the rest of the components of  $\hat\theta(\eps)$. Note that if one plugs in $\lambda_i = \lambda_j$ for all $i, j > k^*$, then it becomes ${\left(n\sum_{i > k^*}\lambda_i^2\right)}/{\left(\sum_{i > k^*}\lambda_i\right)^2} = n/(p-k^*)$ --- exactly the variance term of the "essentially high-dimensional" regime of Section \ref{sec::regimes of linear regression}. The conclusion of  \cite{benign_overfitting} is therefore that  the only way that an interpolating solution can damp the noise by more than a constant factor is the following: the data is such that after removing $k$ components, it becomes "essentially high-dimensional", meaning that the effective rank of its covariance is large  compared to the number of data points. After that the variance in the first $k$ components is the same as for the classical least squares, and the variance in the rest of the components corresponds to the "essentially high-dimensional" case, where you cannot learn but the noise is still damped.  Note, however, that that story was not complete because only the variance term was bounded  sharply in that work.

 \subsection{Our contribution}
 \label{sec::our contribution intuition}
 We complete the story of  \cite{benign_overfitting} by providing sharp bounds on the bias term, extending the results to the setting of ridge regression with nonzero $\lambda$, and replacing the assumption of independence of the components by a much broader sufficient condition. 
 From our point of view, $k^*$ is the main discovery of \cite{benign_overfitting}.  In our work we also start with separation of the first $k$ eigendirections and show that the same split %allows to obtain 
 leads to a bound for the bias term that is in full alignment with the intuitive explanation given above. 
 
Let's introduce some notation. Recall that we fixed the basis to be the eigenbasis of the covariance in \eqref{eq::lambdas definition}. For any $k \in \{0, 1, \dots, p\}$ we denote $X_\uptok$ and $Z_\uptok$ to be the matrices comprised of the first $k$ columns of $X$ and $Z$ respectively.\footnote{When $k=0$ these matrices are just empty and all the terms that involve $\uptok$ index become zero.}  Analogously, we denote $X_\ktoinf$ and $Z_\ktoinf$ to be the matrices comprised of the last $p-k$ columns of $X$ and $Z$, $\Sigma_\uptok = \diag(\lambda_1, \dots, \lambda_{k})$ and $\Sigma_\ktoinf = \diag(\lambda_{k+1}, \dots, \lambda_{p}).$ For any $\theta\in \R^p$ we denote $\theta_\uptok$ to be the vector comprised of the first $k$ components of $\theta$, and $\theta_\ktoinf$ --- of the remaining components. We choose the $\ktoinf$ notation instead of $k:p$ to emphasize that our results don't depend on $p$, and only the notions of effective dimension implicitly given by the sequence $\{\lambda_i\}_{i=1}^p$ matter. For example, if one increases the dimension to $p' > p$ and pads the sequence $\{\lambda_i\}_{i=1}^p$ with $p' - p$ zeros, our results will still hold.

The central object in our analysis is the following matrix:
\begin{equation}
\label{eq::A_k definition}
A_k := X_\ktoinf X_\ktoinf^\top + \lambda I_n.
\end{equation}
The matrix $X_\ktoinf X_\ktoinf^\top$ is the Gram matrix of the data after removing the first $k$ components. $A_k$ is obtained from that Gram matrix by shifting all eigenvalues by the ridge regularization parameter $\lambda$. 

In \cite{benign_overfitting}, the crucial step was to show that the singular values of $A_k$ are within a constant factor of each other for $k = k^*$ (see their Lemma 5). When the components of data vectors are independent, such control over the condition number is a consequence of high effective rank. In this paper, the roles of effective rank and condition number of $A_k$ are reversed. We prove sharp bounds assuming that there is some oracle that guarantees that with high probability all eigenvalues of $A_k$ are within a constant factor of each other. Independence of components is not needed. Moreover, such control implies that $\rho_k$ is at least a constant, which, in turn, implies sharpness of the bounds. In other words, we provide a more general condition under which the tail of the data is "essentially high dimensional" --- instead of assuming independent components and high effective rank, only oracle control of condition number of $A_k$ is needed. In Section \ref{sec:assumption on A_k} we provide an extensive discussion of this assumption: we show that a version of a small-ball condition for the tails of the data is required and that a  stronger version of the same condition is sufficient if the data is sub-Gaussian.

The bound that we obtain for the bias term is given informally by the following expression:
\[
B \approx  \|\theta^*_\ktoinf\|_{\Sigma_\ktoinf}^2 +  \|\theta_\uptok^*\|_{\Sigma_\uptok^{-1}}^2\left(\frac{\lambda + \sum_{i > k} \lambda_i}{n}\right)^2.
\]
One can see how it aligns with the %story about 
intuition of ``essentially low-dimensional'' and ``essentially high-dimensional'' parts: one cannot estimate the signal in the high dimensional part, so almost all of its energy $\|\theta^*_\ktoinf\|_{\Sigma_\ktoinf}^2$ goes into the error. When it comes to the low-dimensional part, the high-dimensional part acts as a ridge regularizer for it, so the bias in the first $k$ components is the same as that of ridge regression with regularization coefficient $\lambda + \sum_{i > k} \lambda_i$ (i.e., the full regularization is equal to the explicitly imposed part $\lambda$ plus "implicit regularization", which is equal to the energy of the tail.)

Our extension of the results to the ridge regression scenario allows us to answer the following question: can it happen that the "essentially high dimensional part" is too high dimensional, meaning that it provides too much regularization and negative $\lambda$ is needed to compensate for that? In Section \ref{sec::negative regularization main body}, we show that this indeed can happen and that the following is sufficient for it to be true: the noise and the energy of the signal in the tail (components $\ktoinf$) are small compared to the signal in the spiked\footnote{Here we use the word "spiked" as in the "spiked covariance models", which usually assume that the eigenvalues of $\Sigma_\ktoinf$ are all equal and of smaller order than eigenvalues of $\Sigma_\uptok$. One way to interpret our results is that only spiked-covariance-like models can exhibit benign overfitting, and we derive general conditions for a model to be spiked-covariance-like.}  part (components $\uptok$),  but the effective rank of the tail abruptly becomes much larger than $n$.

\subsection{Additional notation}
\label{sec::additional notation}
For any symmetric matrix $M \in \R^{n \times n}$ and any $i \in \{1, 2, \dots, n\}$ we write $\mu_i(M)$ for the $i$-th largest eigenvalue of $M$. For example, $\mu_1(M)$ is its largest eigenvalue, and $\mu_n(M)$ is the smallest. We write $M[i,j]$ for the element of $M$ standing in the $i$-th row and $j$-th column.

Throughout the paper the following objects will be needed: for any $i$ denote $z_i$ to be the $i$-th column of $Z$. Then define 
\[
A_{-i} := X_{0:i-1}X_{0:i-1}^\top + X_{i:\infty}X_{i:\infty}^\top + \lambda I_n = \lambda I_n + \sum_{j \neq i}\lambda_j z_j z_j^\top,
\]
an analogue of the matrix $A$, but we throw away the $i$-th component of the data vectors. Denote also
\[
\rho_k(0) = \frac{1}{n\lambda_{k+1}}\sum_{i > k}\lambda_i,
\]
the ratio of the effective rank of the tail to the number of data points without taking regularization $\lambda$ into account.  

Finally,  denote our data points to be $\{x^i\}_{i=1}^n$, i.e., $X^\top = [x^1, \dots, x^n] \in \R^{p \times n}$.

For the readers convenience, we compile all the notation in Appendix~\ref{sec::notations appendix}.
\section{Main results}
\label{sec::main result main body}

As we explained in the previous section, the central objects in our proof are $A_k$ and $\rho_k$.  In principle, any control of the spectrum of $A_k$ leads to some upper bound on $B$ and $V$ (see our Theorem \ref{th::main upper main body}), the question is when that bound is tight. The intuitive answer is the following: the bound is tight when the condition number of $A_k$ is a constant and $k$ is chosen correctly, meaning that either $\rho_k$ is a constant or  $k$ is the smallest number such that $\rho_{k}$ is larger than a constant (i.e., $k = k^*$).\footnote{Note that there may be several values of $k$ that satisfy these conditions. Applying our upper bound for any of those $k$ will yield the same result up to a constant factor.} Our arguments, however, only support this intuition when the following technical assumption holds for some constant $\gamma < 1$:
\begin{enumerate}[leftmargin=3cm]
\item[\namedlabel{as::gamma}{{\it NoncritReg}}($k, \gamma$)] Assume that $\lambda > -\gamma \sum_{i > k} \lambda_i$.
\end{enumerate}
The reason why this assumption is needed is that as $\lambda$ approaches $-\sum_{i > k}\lambda_i$, $\E A_k$ approaches zero. It still can be possible to bound the eigenvalues of $A_k$ with high probability in such regime, but their magnitude will be smaller, and some error terms that were dominated before become significant. We do investigate such a regime in Section~\ref{sec::negative regularization main body}, where we show that negative regularization may give better rates than any value of non-negative regularization, but we only provide an upper bound there. 
For all the results we discuss in this section, we make  Assumption  \ref{as::gamma}$(k, \gamma)$.

The focus of our work was  to obtain the tight upper bound on the excess risk under minimal assumptions. Such minimal assumption turns out to be 
\begin{enumerate}[leftmargin=3cm]
\item[\namedlabel{as::condition number of A_k}{{\it CondNum}}($k, \delta, L$)] Assume that with probability at least $1-\delta$ the matrix $A_k$ is positive-definite (PD)  with condition number at most $L$.
\end{enumerate}  We provide a thorough discussion of this assumption in Section \ref{sec:assumption on A_k}, for example we derive sufficient and almost matching necessary conditions for it to hold when the distribution is sub-Gaussian. The reason why we don't just assume those sufficient conditions is that we believe that sub-Gaussianity is not essential for our results to hold, as we discuss in Section \ref{sec::role of sub-Gaussianity}. Moreover, the matrix $A_k$ is the central object in our argument, and making an assumption on its condition number explicitly makes  presentation easier.

A careful reader will notice that we have just stated that another condition is needed for the bound to be tight: $k$ should be chosen in the right way. This, however, can be achieved by shifting $k$ to $k^*$ if necessary: indeed, assumptions  \ref{as::gamma}$(k, \gamma)$ and \ref{as::condition number of A_k}$(k, \delta, L)$ imply a constant lower bound on $\rho_k$ (see Corollary \ref{cor::upper bound given L main body}). That means that either $\rho_k$ is a constant, or it is more than a constant, i.e., $k > k^*$. In the latter case one can shift from $k$ to $k^*$ meaning that Assumption \ref{as::condition number of A_k}$(k^*, \delta', L')$ also holds with modified constants $\delta', L'$ (see Lemma \ref{lm::k star must give eigvals} for the exact statement). Now applying the upper bound (Corollary \ref{cor::upper bound given L main body}) with $k = k^*$ gives tight result, as given by the following
\begin{theorem}
\label{th::showcase upper}
Fix any constants $b > 0,$ $ \gamma \in [0, 1),$ $L > 0.$ Denote \[
k^* = \min\{\kappa: \rho_\kappa > b\}.
\]
There exists a constant $c$ which only depends on $\sigma_x$, $b$, $\gamma$, $L$ such that  the following holds: suppose \ref{as::gamma}$(\bar{k}, \gamma)$ and \ref{as::condition number of A_k}$(\bar{k}, \delta, L)$ are satisfied for some $\bar{k} < n/c$ and $\delta < 1-ce^{-n/c}$. Take $k = \min(\bar{k}, k^*)$.  Then  with probability at least $1 - ce^{-n/c} - \delta$
\begin{align}
B/c \leq& \|\theta^*_\ktoinf\|_{\Sigma_\ktoinf}^2 +  \|\theta_\uptok^*\|_{\Sigma_\uptok^{-1}}^2\left(\frac{\lambda + \sum_{i > k} \lambda_i}{n}\right)^2\label{eq::bias bound main},\\
V/c \leq& \frac{k}{n} + \frac{n\sum_{i > k} \lambda_i^2}{\left(\lambda + \sum_{i > k} \lambda_i\right)^2}.\label{eq::variance bound main}  
\end{align}
Moreover $\rho_k \geq c^{-1}$,  \ref{as::gamma}$(k, \gamma)$ holds, and there exist $L', c'$ that only depend on $\sigma_x, b, \gamma, L$ s.t. \ref{as::condition number of A_k}$(k, \delta + c'e^{-n/c'}, L')$ holds.\footnote{That is, the assumptions still hold if we substitute $\bar{k}$ by $k$, but  with different $L, \delta$. Further we will see that satisfaction of these assumptions implies tightness of the bounds for the chosen $k$.}
\end{theorem}
\begin{proof}
In this proof let's call any quantities that only depend on $\sigma_x$, $\gamma$, $b$ and $L$ "constants". 
 First of all, if $\bar{k} \leq k^*$ then $k = \bar{k}$. Since we are given that \ref{as::gamma}$(\bar{k}, \gamma)$  and \ref{as::condition number of A_k}$(\bar{k}, \delta, L)$ are satisfied, we immediately get that \ref{as::gamma}$(k, \gamma)$ and \ref{as::condition number of A_k}$(k, \delta + c'e^{-n/c'}, L')$ are satisfied with $L' = L$ and any $c' > 0$. However, if $\bar{k} > k^*$ then $k = k^*$ and by Lemma \ref{lm::k star must give eigvals} \ref{as::gamma}$(k, \gamma)$ and \ref{as::condition number of A_k}$(k, \delta + c'e^{-n/c'}, L')$  are still satisfied for some constants $c', L'$. Note that the larger the constants, the looser the assumptions, so we can take our final choice of $c', L'$ to be the maximum over two cases. 

Now that we know that \ref{as::gamma}$(k, \gamma)$ and \ref{as::condition number of A_k}$(k, \delta + c'e^{-n/c'}, L')$ are satisfied, by Corollary \ref{cor::upper bound given L main body}, there is a constant $c_1$ such that $\rho_k > 1/c_1$ and with probability at least $1 - c_1e^{-n/c_1} - c'e^{-n/c'} - \delta$ 
\begin{align*}
B/c_1 \leq& \|\theta^*_\ktoinf\|_{\Sigma_\ktoinf}^2 +  \|\theta_\uptok^*\|_{\Sigma_\uptok^{-1}}^2\left(\frac{\lambda + \sum_{i > k} \lambda_i}{n}\right)^2,\\
V/c_1 \leq& \frac{k}{n} + \frac{n\sum_{i > k} \lambda_i^2}{\left(\lambda + \sum_{i > k} \lambda_i\right)^2}.  \\
\end{align*}
Taking $c \geq c_1 + c'$ gives the first part.
\end{proof}

Algebraically, under Assumption  \ref{as::condition number of A_k}$(k, \delta, L)$  all eigenvalues of $A_k^{-1}$ are within a constant factor of each other, so one can pull its operator norm from the expressions and obtain an upper bound without losing tightness. This strategy, however, doesn't produce lower bounds, so we derive them in a different way: we decompose bias and variance into sums with respect to individual coordinates of the predictor, and bound each term in each sum from below.  Because of that, we impose different assumptions, namely 
\begin{enumerate}[leftmargin=3cm]
\item[\namedlabel{as::independent components}{{\it IndepCoord}}] Assume that all elements of matrix $X$ are independent (i.e., data vectors have independent coordinates).
\end{enumerate}
for the variance term, and 
\begin{enumerate}[leftmargin=3cm]
\item[\namedlabel{as::exchangeable components}{{\it ExchCoord}}] Assume that the sequence of coordinates of $\Sigma^{-1/2}x$ is exchangeable (any deterministic permutation of the coordinates of whitened data vectors doesn't change their distribution).
\item[\namedlabel{as::prior on theta^*}{{\it PriorSigns}}($\bar{\theta}$)] Assume that $\theta^*$ is sampled from a prior distribution in the following way: one starts with vector $\bar{\theta}$ and flips signs of all its coordinates with probability $0.5$ independently.
\end{enumerate}
for the bias term. Because of this mismatch in assumptions, our lower bounds don't show that our upper bound is always tight. What they show is that one needs some specific knowledge about the distribution to obtain better bounds. We provide a more detailed discussion of the relations between those assumptions in Section \ref{sec::lower bounds main body}. The lower bounds themselves are given by the following
\begin{theorem}
\label{th::showcase lower}
Fix any constants $b > a > 0$,   $ \gamma \in [0, 1),$ $L > 0.$ Denote \[
k^* = \min\{\kappa: \rho_\kappa > b\}.
\]
There exists a constant $c$ which only depends on $\sigma_x$, $a$, $b$, $\gamma$, $L$ such that all the following hold:
\begin{enumerate}
\item For any $k \in \{0, 1, \dots, k^*\}$ under assumptions \ref{as::independent components},  \ref{as::gamma}$(k, \gamma)$, if $\rho_k > a$ then with probability at least $1 - 2\delta - ce^{-c/n}$
\[
V \geq\frac{1}{c}\left( \frac{k}{n} + \frac{n\sum_{i > k} \lambda_i^2}{\left(\lambda + \sum_{i > k} \lambda_i\right)^2}\right).
\]

\item  For any $k \in \{1, 2, \dots, k^*\}$ under assumptions \ref{as::gamma}$(k, \gamma)$, \ref{as::condition number of A_k}$(k, \delta, L)$, \ref{as::prior on theta^*}$(\bar{\theta})$ and \ref{as::exchangeable components}, if $\rho_k > a$ then  with probability at least $1 - 2\delta - ce^{-c/n}$
\[
\E_{\theta^*}B \geq\frac{1}{c}\left( \|\bar{\theta}_\ktoinf\|_{\Sigma_\ktoinf}^2 +  \|\bar{\theta}_\uptok\|_{\Sigma_\uptok^{-1}}^2\left(\frac{\lambda + \sum_{i > k} \lambda_i}{n}\right)^2\right),
\]
where $\E_{\theta^*}$ denotes expectation over a random draw of $\theta^*$ from the distribution described in assumption \ref{as::prior on theta^*}$(\bar{\theta})$.\footnote{ Note that under this distribution $\|\bar{\theta}_\ktoinf\|_{\Sigma_\ktoinf}  = \|{\theta^*}_\ktoinf\|_{\Sigma_\ktoinf}$ and $\|\bar{\theta}_\uptok\|_{\Sigma_\uptok^{-1}} = \|{\theta^*}_\uptok\|_{\Sigma_\uptok^{-1}}$ almost surely.}
\end{enumerate}
\end{theorem}
\begin{proof}
 Lemma \ref{lm::var lower main body} gives a lower bound for $V$, and Lemmas~\ref{lm::bias lower main body} and \ref{lm::exchangeable coordinates plus condition number of A_k} give the lower bound for B. Those lower bounds have the desired probability, but different algebraic form. To bring them to the same form as the upper bounds one needs the right $k$ to be chosen. We assumed that $\rho_k > a$. Moreover, since $k \leq k^*$ by definition of $k^*$ we either have $\rho_k \leq b$ or $k = k^*$. In both of those cases Theorem \ref{th:upper same as lower main body} guarantees that  these lower bounds are the same as what we need up to multiplicative constants that only depend on $\sigma_x$, $\gamma$, $a$, $b$ and $L$.
\end{proof}

One can notice from this proof that having separate arguments for the lower bounds results in a different algebraic form of the same bound. This different form turns out to be convenient to draw explicit connections between our results and results from earlier works. We do that in Section \ref{sec::alternative form of the bound}.

\section{Effective ranks and control of the spectrum of $A_k$}
\label{sec:assumption on A_k}
The central assumption that we need to compute the excess risk is Assumption  \ref{as::condition number of A_k}$(k, \delta, L)$, which provides control over condition number of $A_k$. In this section we discuss when this assumption is known to be satisfied and what are the necessary conditions for it to happen. 
\subsection{Effect of $\lambda$ on the condition number}
Recall that $A_k = X_\ktoinf X_\ktoinf^\top + \lambda I_n$, so its spectrum is the shift by $\lambda$ of the spectrum of $X_\ktoinf X_\ktoinf^\top$, the random matrix that is equal to the Gram matrix of the projected data. There are therefore three ways of establishing a constant upper bound on the condition number of $A_k$:
\begin{enumerate}
\item Establish an upper bound $\bar{\mu}$ on $\mu_1(X_\ktoinf X^\top_\ktoinf)$ and take $\lambda > \bar{\mu}/c$ for some constant $c > 0$. In this case, the singular values of $A_k$ are all equal to $\lambda$ (and greater than $\bar{\mu}$) up to a constant multiplier.
\item Establish upper and lower bounds $\bar{\mu}$ and $\underline{\mu}$ on $\mu_1(X_\ktoinf X^\top_\ktoinf)$  and $\mu_n(X_\ktoinf X^\top_\ktoinf)$ respectively, such that $\bar{\mu}/\underline{\mu}$ is a constant. Then take $\lambda > -\underline{\mu}/c$ for some constant $c > 1$.  In this case, the singular values of $A_k$ are all equal to $\bar{\mu}$ (or $\underline{\mu}$) up to a constant multiplier.
\item  Establish upper and lower bounds $\bar{\mu}$ and $\underline{\mu}$ on $\mu_1(X_\ktoinf X^\top_\ktoinf)$  and $\mu_n(X_\ktoinf X^\top_\ktoinf)$ respectively, and take $\lambda = -\underline{\mu} + \Diamond$, where $\Diamond \geq c(\bar{\mu} - \underline{\mu})$ for a constant $c > 0$.
In this case, the singular values of $A_k$ are all equal to $\Diamond$ up to a constant multiplier.  This case can be substantially different from the previous case when the singular values of $X_\ktoinf X^\top_\ktoinf$ are very well concentrated, i.e., the gap $\bar{\mu} - \underline{\mu}$ is of smaller order than $\underline{\mu}$ itself. In this case $\Diamond$ can be a smaller order term.
\end{enumerate}

Our bounds are sharp when assumption \ref{as::gamma}($\gamma$) is satisfied for some $\gamma < 1$, i.e., in the first and the second case above. The third case is quite rare because it requires very good concentration of the spectrum of $X_\ktoinf X^\top_\ktoinf$. Moreover, in this case $\lambda$ is very close to the critical negative value under which it is impossible to even guarantee that $A_k$ is PD as it becomes negative definite in expectation.  We use this regime to investigate how negative regularization can improve excess risk by more than a constant factor in Section \ref{sec::negative regularization main body}. However, we don't expect our bounds to always be sharp in this regime. 

Therefore, we focus our attention on the first two cases. In Section \ref{sec::A_k necessary conditions informal} we discuss informally what conditions on the distribution are necessary to bound $\mu_1(X_\ktoinf X^\top_\ktoinf)$ and $\mu_n(X_\ktoinf X^\top_\ktoinf)$, and show how notions of high effective rank and norm concentration condition arise.  In Section \ref{sec::condition number via sub-Gaussianity} we combine those bounds for sub-Gaussian data with the choice of $\lambda$ to provide necessary and almost matching sufficient conditions for the condition number of $A_k$ to be constant under sub-Gaussianity. In Section \ref{sec::A_k heavy tailed} we show that sub-Gaussianity is not actually required for the condition number of $A_k$ to be controlled with high probability: Theorem \ref{th::A_k heavy tailed} states that norm concentration condition and a modified version of high effective rank condition are sufficient even if the data  only has bounded $4 + \eps$ moments.

\subsection{Informal necessary conditions}
\label{sec::A_k necessary conditions informal}
There are several easy observations that help understand what is needed for the condition number of $A_k$ to be bounded. 

\begin{enumerate}
\item 
The first observation is that $X_\ktoinf X^\top_\ktoinf\succeq \lambda_{k+1} z_{k+1} z_{k+1}^\top$, where $z_{k+1}$ is the first column of $Z_\ktoinf$ ---a vector with $n$ i.i.d. coordinates with unit variance. By the law of large numbers, $\|z_{k+1}\|^2 \approx n$, meaning that $\|\lambda_{k+1} z_{k+1} z_{k+1}^\top\|\approx\lambda_{k+1}n$. Therefore, $\bar{\mu} \gtrsim \lambda_{k+1}n$.

\item The second observation is that  the diagonal elements of $X_\ktoinf X^\top_\ktoinf$ are squared norms of the tails of data vectors. Recall that we denoted the data points to be $\{x^i\}_{i=1}^n$. We can write
\[
(X_\ktoinf X^\top_\ktoinf)[i,i] = \|x^i_\ktoinf\|^2\text{ --- i.i.d. r.v's.}
\]
Once again, by the law of large numbers, $
\tr(X_\ktoinf X^\top_\ktoinf) \approx n\sum_{i > k} \lambda_i,$ which implies that $\bar{\mu} \gtrsim \sum_{i > k} \lambda_i \gtrsim \underline{\mu}$. 
Combining it with the first observation shows that $\bar{\mu}$ and $\underline{\mu}$ can only be within a constant multiplier of each other when $\sum_{i > k} \lambda_i \geq c\lambda_{k+1}n$ for some constant $c$. This is exactly the high effective rank condition $\rho_k > c$ for $\lambda = 0$.

\item The third observation is that the diagonal elements of a  PD matrix themselves provide bounds on the singular values:
\[
\mu_n(X_\ktoinf X^\top_\ktoinf)\leq \min_{i \in [n]} (X_\ktoinf X^\top_\ktoinf)[i,i] \leq \max_{i \in [n]} (X_\ktoinf X^\top_\ktoinf)[i,i] \leq \mu_1(X_\ktoinf X^\top_\ktoinf).
\]

Therefore, to control condition number of $X_\ktoinf X^\top_\ktoinf$ by a constant $L$ with probability $1-\delta$, it is necessary to guarantee that
\[
\max_i\|x^i_\ktoinf\|^2 \leq L \min_j \|x^j_\ktoinf\|^2,
\]
i.e., $n$ independent random draws of the random variable $\|x_\ktoinf\|^2$  should all lie within a constant factor of some value, meaning that the norm of the tail of a data vector should be within a constant factor of a fixed value  with probability $(1-\delta)^{1/n}$.
\end{enumerate}

\subsection{Controlling condition number under sub-Gaussianity}
\label{sec::condition number via sub-Gaussianity}
Sub-Gaussianity of the data implies an upper bound on $\mu_1(A_k)$, but doesn't help with $\mu_n(A_k)$. To see this one can consider a well-known construction: take a sub-Gaussian distribution and construct another distribution in the following way: to sample from this new distribution take a vector from the old distribution and multiply it by $\sqrt{2}$ with probability $1/2$ and by zero otherwise. The new distribution is still sub-Gaussian with the same covariance, but the Gram matrix of $n$ i.i.d. samples from it is degenerate with probability at least $1-2^{-n}$. Therefore, an additional assumption is needed to lower bound $\mu_n(A_k)$. As we already mentioned in Section \ref{sec::A_k necessary conditions informal}, we need norm concentration. Since sub-Gaussianity allows to bound the norm from above, it reduces to a version of the small-ball condition: $\|x_\ktoinf\|$ should be lower-bounded with high probability. The formal result is given by the following
\begin{restatable}[Controlling $\mu_1(A_k)/\mu_n(A_k)$ under sub-Gaussianity]{lemma}{controllingAk}\label{lm::controlling A_k iff small ball and rank}
For any $\gamma \in [0, 1)$ and $\sigma_x > 0$ there exists $c> 0$ that only depends on $\sigma_x$ and $\gamma$ such that under Assumption  \ref{as::gamma}$(k, \gamma)$  the following holds: for any $L \geq 1$
\begin{itemize}
\item If $\rho_k \geq L^2$ and with probability at least $(1 - \delta)^{1/n}$
\[
\lambda + \|x_\ktoinf\|^2 \geq \frac{c}{L}\left(\lambda + \E\|x_\ktoinf\|^2\right), 
\]
then with probability at least $1 - \delta - ce^{-n/c}$ 
\[
\mu_n(A_k) \geq L^{-1}\mu_1(A_k). 
\]
\item Suppose that it is known that with probability at least $ce^{-n/c}$ $\mu_n(A_k) \geq L^{-1}\mu_1(A_k)$. Then $\rho_k \geq \frac{1}{cL}$ and with probability at least $\left(1 - ce^{-n/c}\right)^{1/n}$
\[
\lambda + \|x_\ktoinf\|^2 \geq \frac{1}{cL}\left(\lambda + \E\|x_\ktoinf\|^2\right).
\]
\end{itemize}
\end{restatable}
The proof is given in Appendix \ref{sec::singular values}. One can see that both the necessary and the sufficient conditions are that $\rho_k$ is lower bounded by a constant and a version of small-ball condition that says that the regularized squared norm of the data exceeds a constant fraction of its expectation with probability $(1-\delta)^{1/n}$. There is, however, a gap in those constants.

\subsection{Heavy-tailed case}
\label{sec::A_k heavy tailed}
The following is a direct corollary of Theorem 2.1 from \cite{heavy_tailed_columns_guedon}
\begin{theorem}
\label{th::A_k heavy tailed}
Suppose that the distribution of the tail satisfies the following two assumptions:
\begin{enumerate}
\item {\bf Norm concentration:} For some $\delta \in (0, 1/n)$, $L > 1$ and $M > 0$  
\[
\Pbb(L^{-1} \leq \|x_\ktoinf\|/M \leq L) \geq 1-\delta.
\]
\item {\bf Heavy-tailed effective rank:}
for some  $h > 4$ denote  $r_{h,k} > 0$ to be the maximum number such that for any $a \in \Sc^{p-k-1}$ and $t > 0$
\[
\Pbb\left(\frac{\sqrt{r_{h,k}}\left|a^\top x_\ktoinf\right|}{M} > t\right) \leq t^{-h}.
\]
\end{enumerate}
There exists a constant $c$ that only depends on $h$ such that
with probability at least $1 - cn^{1-h/4} - n\delta$
\begin{align*}
\mu_1(X_\ktoinf X_\ktoinf^\top) \leq& M^2 \left(L^2 + cL^2\left(n^{1-h/4} +  \sqrt{\frac{n}{r_{h,k}L^2}} + \frac{n}{r_{h,k}L^2}\right)\right),\\
\mu_n(X_\ktoinf X_\ktoinf^\top) \geq& M^2 \left(L^{-2} - cL^2\left(n^{1-h/4} +  \sqrt{\frac{n}{r_{h,k}L^2}} + \frac{n}{r_{h,k}L^2}\right)\right).
\end{align*}
\end{theorem}
\begin{proof}
First, note that by union bound with probability at least $1 - n\delta$ all the diagonal elements of the matrix $X_\ktoinf X_\ktoinf^\top$ belong to the segment $[L^{-2}M^2, L^2M^2]$. 
Next, take the bound on $B_k$ from the Case 1 of  Theorem 2.1 from \cite{heavy_tailed_columns_guedon} with the following choice of their parameters: $k = N$, $\tau = 1$, $\lambda = p$, $\sigma = 1 + p/4$, $t = \sqrt{n}$.
Use that bound for vectors $\sqrt{r_{h,k}}x^i_\ktoinf/M$. Note that that  $B_k$ is exactly the  operator norm of the off-diagonal part of $r_{h,k}X_\ktoinf X_\ktoinf^\top/M^2$.
\end{proof}

%This theorem implies that even in heavy-tailed case, as long as the ratio of the norms of the tails to the typical size of their one-dimensional projections (which reduces to the notion of effective rank in sub-Gaussian case) stays much larger than $n$, then the condition number of matrix $X_\ktoinf X_\ktoinf^\top$ will be close to 1.

The quantity $r_{h,k}$ that we introduced in  Theorem \ref{th::A_k heavy tailed} can be interpreted as a notion of effective rank for heavy tailed distributions. Indeed, one can write
\[
\sqrt{r_{h, k}} = \frac{M}{\inf\left\{\tau: \forall a \in \Sc^{p-k-1} \forall t>0\; \Pbb\left(\left|a^\top x_\ktoinf\right|/\tau > t\right) \leq t^{-h}\right\}}.
\]
--- the ratio of the typical norm of the random vector to the scale of the worst case deviations of its one-dimensional projection. This is completely analogous to our usual definition of the effective rank: $r_k = \lambda_{k+1}^{-1}\sum_{i > k}\lambda_i$. Indeed, in sub-Gaussian case $\sqrt{\sum_{i > k}\lambda_i}$ is the typical value of the norm of the vector $x_\ktoinf$, and $\sqrt{\lambda_{k+1}}$ is up to constant the largest sub-Gaussian norm of its one-dimensional projection. We see that the conditions under which the eigenvalues of $X_\ktoinf X_\ktoinf^\top$ are within a constant factor of each other with high probability remain the same even in the heavy-tailed case: the norm of $\|x_\ktoinf\|$  concentrates within a constant factor of a fixed quantity, and the heavy-tailed effective rank $r_{h, k}$ should be large compared to the number $n$ of data points.

\section{Structure of the proof and role of sub-Gaussianity}
\label{sec::proof structure main body}
\subsection{Upper bound}

The core of our argument is Theorem \ref{th::main upper main body} given below. There are two important things to note about it: first, it only requires sub-Gaussianity and matrix $A_k$ being positive semidefinite (which always holds with probability $1$ for non-negative $\lambda$). Second, its proof decomposes very clearly into two parts: an algebraic part, which only requires $A_k$ being PD and holds with probability $1$ conditionally on  this event, and a probabilistic part, where standard concentration results are directly plugged into the algebraic bounds. Because of this decomposition, it is straightforward to track how the sub-Gaussianity is used and how it can be relaxed. We provide the sketch of the proof to show these details.
\begin{restatable}{theorem}{mainthm}\label{th::main upper main body}
There exists a (large) constant $c$, which only depends on $\sigma_x$, s.t.\ for any $k < n/c$  with probability at least $1 - ce^{-n/c}$, if the matrix $A_k$ is PD, then 

\begin{align*}
B/c \leq& \|\theta^*_\ktoinf\|_{\Sigma_\ktoinf}^2\left(1 +  \frac{\mu_1(A_k^{-1})^2}{\mu_n(A_k^{-1})^2} + n\lambda_{k+1}\mu_1(A_k^{-1})\left(1 +\max(0, -\lambda)\mu_1(A_k^{-1})\right) \right)\\
+& \|\theta_\uptok^*\|_{\Sigma_\uptok^{-1}}^2\left(\frac{1}{n^2\mu_n(A_k^{-1})^2} +  \frac{\lambda_{k+1}}{n}\frac{\mu_1(A_k^{-1})}{\mu_n(A_k^{-1})^2}\left(1 +\max(0, -\lambda)\mu_1(A_k^{-1})\right) \right), \\
V/c \leq& \frac{\mu_1(A_k^{-1})^2}{\mu_n(A_k^{-1})^2}\frac{k}{n} + n \mu_1(A_k^{-1})^2 \sum_{i > k} \lambda_i^2.  \\
\end{align*}
\end{restatable}
\begin{proofsketch}
The full %rigorous 
proof of Theorem \ref{th::main upper main body} can be found in Section~\ref{subsec::upper} of the appendix. The following is a sketch of its derivation.

Recall the following notation: for any $y$
\[
\hat\theta(y) = X^\top(\lambda I_n + XX^\top)^{-1}y.
\]

As explained in Section \ref{sec::benign overfitting story}, \cite{benign_overfitting} introduced the notion of $k^*$ for which the  behaviour of the variance term in the first $k^*$ coordinates is qualitatively different than in the rest of the coordinates. Their argument, however, relies crucially on independence of the components of the data. The main idea that allowed us to get rid of that assumption and to obtain the tight bound for the bias term  was to separate the first $k$ coordinates from the very beginning and to use some sort of uniform convergence argument in that low-dimensional subspace. 

The crucial tool that allowed us to realise this idea turned out to be the following algebraic identity that we prove in Section \ref{sec::identity} of the appendix:
\[
\hat\theta( y)_\uptok + X_\uptok^\top A_k^{-1} X_\uptok\hat\theta( y)_\uptok = X_\uptok^\top A_k^{-1}y.
\]
This identity allows convenient access to the error in the first $k$ coordinates (the spiked part).

The argument decomposes clearly into two parts: algebraic and probabilistic. The algebraic part is to decompose the excess risk  (up to a constant multiplier) into four terms and show that the following inequalities hold  on the event that the matrix $A_k$ is PD: \\
\noindent (1) Bias error in the spiked part: 
\begin{align*}
\|\hat\theta( X\theta^*)_\uptok - \theta^*_\uptok\|_{\Sigma_\uptok}
\leq& \frac{\mu_1(A_k^{-1})}{\mu_n(A_k^{-1})}\frac{\mu_1\left(\Sigma^{-1/2}_\uptok X_\uptok^\top X_\uptok\Sigma_\uptok^{-1/2} \right)^{1/2}}{\mu_k\left(\Sigma^{-1/2}_\uptok X_\uptok^\top X_\uptok\Sigma_\uptok^{-1/2} \right)}\|X_\ktoinf \theta^*_\ktoinf\| \\*
&\qquad {} + \frac{\|\theta_\uptok^*\|_{\Sigma_\uptok^{-1}}}{\mu_n(A_k^{-1})\mu_k\left(\Sigma^{-1/2}_\uptok X_\uptok^\top X_\uptok\Sigma_\uptok^{-1/2} \right)}.
\end{align*}
\noindent (2) Variance error in the spiked part:  
\begin{align*}
\E_{\eps}\|\hat\theta(\eps)_\uptok\|_{\Sigma_\uptok}^2
\leq \frac{\mu_1( A_k^{-1})^2\|X_\uptok\Sigma_\uptok^{-1/2} \|^2}{\mu_n(A_k^{-1})^2 \mu_k\left(\Sigma_\uptok^{-1/2}X_\uptok^\top X_\uptok \Sigma_\uptok^{-1/2}\right)^2}.
\end{align*}
\noindent (3) Variance error in the tail:
\begin{align*}
\E_\eps\|\hat\theta( \eps)_\ktoinf - \theta^*_\ktoinf\|_{\Sigma_\ktoinf}^2\leq  \mu_1(A_k^{-1})^2 \tr(X_\ktoinf\Sigma_\ktoinf X_\ktoinf^\top).
\end{align*}
\noindent (4) Bias  error in the tail:
\begin{align*}
&\frac13\|\hat\theta( X\theta^*)_\ktoinf - \theta^*_\ktoinf\|_{\Sigma_\ktoinf}^2\\
\leq&\|\theta^*_\ktoinf\|_{\Sigma_\ktoinf}^2+\lambda_{k+1} \bigl(1+ \max(0, -\lambda)\mu_1(A_k^{-1})\bigr) \mu_1(A_k^{-1})\|X_\ktoinf\theta^*_\ktoinf\|^2\\
+&\lambda_{k+1} \bigl(1+ \max(0, -\lambda)\mu_1(A_k^{-1})\bigr)\frac{\mu_1(A_k^{-1})}{\mu_n(A_k^{-1})^2} \frac{\mu_1(\Sigma_{\uptok}^{-1/2}X_\uptok^\top X_\uptok\Sigma_{\uptok}^{-1/2})}{\mu_k(\Sigma_{\uptok}^{-1/2}X_\uptok^\top X_\uptok\Sigma_{\uptok}^{-1/2})^2}\|\Sigma_{\uptok}^{-1/2}\theta^*_\uptok\|^2.
\end{align*}

The probabilistic part of the argument is to control the quantities that arise in the algebraic bound with high probability. Namely, we plug in 
\begin{itemize}
\item Concentration of $k$-dimensional sample covariance  with $n$ samples: w.h.p.
\[
\mu_k\left(\frac1n\Sigma_\uptok^{-1/2}X_\uptok^\top X_\uptok \Sigma_\uptok^{-1/2}\right)   \approx_{\sigma_x}
\mu_1\left(\frac1n\Sigma_\uptok^{-1/2}X_\uptok^\top X_\uptok \Sigma_\uptok^{-1/2}\right) \approx_{\sigma_x} 1.
\]
\item Concentration of norm of vectors with i.i.d. components: w.h.p.
\begin{equation*}
\frac1n\|X_\uptok\Sigma_\uptok^{-1/2}\|^2 \lesssim_{\sigma_x} k,\quad 
\frac1n\|X_\ktoinf \Sigma_\ktoinf^{1/2}\|^2\lesssim_{\sigma_x} \sum_{i > k} \lambda_i^2,\quad
\frac1n\|X_\ktoinf\theta^*_\ktoinf\|^2\lesssim_{\sigma_x} \|\theta^*_\ktoinf\|_{\Sigma_\ktoinf}^2.
\end{equation*}
\end{itemize}
After plugging in the probabilistic bounds, the final result is obtained by a straightforward computation. 
\end{proofsketch}

Note that the only probabilistic statements that are used in this proof are concentration of sample covariance in dimension $k$ and concentration of the sum of $n$ i.i.d. random variables. The same concentration results hold with weaker assumptions, but with larger probability. For example, under rather weak moment assumptions only a linear in dimension number of samples is needed for the sample covariance matrix to concentrate within a constant factor of the population covariance, see \cite{10.1093/imrn/rnx067} and references therein. It is also interesting to point out that the "uniform convergence" result that we mentioned in the beginning of this sketch is nothing but the convergence of the empirical covariance matrix $\Sigma_\uptok^{-1/2}X_\uptok^\top X_\uptok \Sigma_\uptok^{-1/2}/n$ to its expectation $I_k$, which is exactly the uniform convergence result that gives the bound in the "essentially low-dimensional" regime from Section~\ref{sec::regimes of linear regression}.

Despite the fact that the bounds of Theorem \ref{th::main upper main body} apply under very general assumptions, we don't expect them to be tight if the condition number of $A_k$ is not bounded by a constant. When some oracle control of the condition number of $A_k$ is provided, the bound becomes the following.
\begin{restatable}{corollary}{upperboundgivenL}\label{cor::upper bound given L main body}
Fix any constants $\gamma \in [0, 1)$ and $L >0$. There exists a constant $c$ that only depends on $\sigma_x$, $\gamma$, $L$ s.t. for any $k < n/c$ and $\delta < 1 - ce^{-n/c}$  under assumptions \ref{as::gamma}$(k, \gamma)$ and \ref{as::condition number of A_k}$(k, \delta, L)$,  it holds that $\rho_k > c^{-1}$, and with probability at least $1 - \delta - ce^{-n/c}$,
\begin{align*}
B/c \leq& \|\theta^*_\ktoinf\|_{\Sigma_\ktoinf}^2 +  \|\theta_\uptok^*\|_{\Sigma_\uptok^{-1}}^2\left(\frac{\lambda + \sum_{i > k} \lambda_i}{n}\right)^2,\\
V/c \leq& \frac{k}{n} + \frac{n\sum_{i > k} \lambda_i^2}{\left(\lambda + \sum_{i > k} \lambda_i\right)^2}.  \\
\end{align*}
\end{restatable}
\begin{proofsketch}
Assumptions \ref{as::gamma}$(k, \gamma)$ and \ref{as::condition number of A_k}$(k, \delta, L)$  imply that all the eigenvalues of $A_k$ are equal to $\lambda + \sum_{i > k} \lambda_i$ up to a multiplicative constant that depends on $L, \gamma, \sigma_x$. Plugging it into Theorem \ref{th::main upper main body} gives the result. The full proof is given in Appendix \ref{subsec::upper}.
\end{proofsketch}
The sub-Gaussianity is used in Corollary \ref{cor::upper bound given L main body} to ensure that $\tr(A_k)$ concentrates around $n\left(\lambda + \sum_{i > k} \lambda_i\right)$. Since the diagonal elements of $A_k$ are i.i.d. random variables, the same concentration would also hold under weaker assumptions with lower but still high probability.

It is also worth mentioning that the story about "essentially high-dimensional" and "essentially low-dimensional" parts is not just an interpretation of the final result: the whole proof strategy is in accordance with it, as we explicitly separate the two parts and bound errors in them separately.

\subsection{Lower bounds}
\label{sec::lower bounds main body}
Our lower bounds have a different form from the upper bounds. We show separately that they match if the condition on effective rank is satisfied. One benefit of this approach is that the lower bounds provide a different form of the same result, which allows for different analysis. We employ it in Section \ref{sec::alternative form of the bound}.

The lower bound for the variance term is given by the following lemma, whose proof is given in Appendix \ref{sec::variance lower appendix}:
\begin{restatable}[Lower bound for the variance term]{lemma}{varlower}\label{lm::var lower main body}
Fix any constant $\gamma \in [0, 1)$. There exists a constant $c$ that only depends on $\sigma_x$ and $\gamma$ s.t. for any $k < n/c$ under  assumptions \ref{as::gamma}$(k, \gamma)$ and \ref{as::independent components}  w.p. at least $1 - ce^{-n/c}$
\[
V \geq \frac{1}{cn}\sum_{i = 1}\min\left\{1, \frac{\lambda_i^2}{ \lambda_{k+1}^2(\rho_k + 1)^2}\right\}.
\]
\end{restatable}
One can see that the assumptions under which the lower bound is proved are different from the assumptions required for the upper bound: we require independent components here. On the one hand, it means that there could be a gap between upper and lower bounds in some particular cases where one can control the condition number of $A_k$ without independence of components. On the other hand, it means that even such strong additional assumption as independence of components does not allow the upper bounds to be improved, which suggests that those specific cases for which the bound is not tight are rare and require even stronger additional assumptions.

The most general lower bound for the bias term that we prove requires the following assumption
\begin{enumerate}[leftmargin=4cm]
\item[\namedlabel{as::lowest eigenvalue of A_-k}{{\it StableLowerEig}}($k, \delta, L$)] Assume that for any $j \in \{1, 2, \dots, p\}$ with probability\footnote{Note that the condition on probability is separate for every $j$, i.e., we don't assume that events hold simultaneously for all $j$.} at least $1-\delta$
\[
\mu_n(A_{-j}) \geq \mu_n(\E A_{k})/L = \left(\sum_{i > k}\lambda_i + \lambda\right)/L,
\]
and that $\lambda > - \sum_{i > k}\lambda_i$.
\end{enumerate}
Then the bound is given by the following lemma, whose proof is given in Appendix \ref{sec::bias lower appendix}
\begin{restatable}[Lower bound for the bias term]{lemma}{biaslower}\label{lm::bias lower main body}
Fix any constant $L > 0$. There exists $c$ that only depends on $\sigma_x$ and $L$ s.t. for any $k\in \{1, 2, \dots, p\}$
under assumptions \ref{as::prior on theta^*}$(\bar{\theta})$ and \ref{as::lowest eigenvalue of A_-k}$(k, \delta, L)$  w.p. at least $1 - 2\delta - ce^{-n/c}$
\[
\E_{\theta^*} B \geq \frac1c\sum_i\frac{\lambda_i\bar{\theta}^2_i}{\left(1 + \frac{\lambda_i}{\lambda_{k+1}\rho_k}\right)^2},
\]
where $\E_{\theta^*}$ denotes the expectation over the random draw of $\theta^*$ from the prior distribution described in assumption \ref{as::prior on theta^*}$(\bar{\theta})$.
\end{restatable}
Assumption \ref{as::lowest eigenvalue of A_-k}$(k, \delta, L)$ is formally not comparable to Assumption \ref{as::condition number of A_k}$(k, \delta, L)$, but informally if $k \geq 1$ then \ref{as::lowest eigenvalue of A_-k}$(k, \delta, L)$ is weaker: indeed, the matrix $A_{-i}$ is obtained from the matrix $A$ by subtracting $\lambda_i z_i^\top z_i$, while the matrix $A_k$ is obtained from $A$ by subtracting $\sum_{i = 1}^k \lambda_i z_i^\top z_i$, i.e., the sum of $k$ "largest" of the terms $  \lambda_i z_i^\top z_i$. Therefore, the matrix $A_{-i}$ is "larger" than $A_k$, and controlling its lowest singular value should be easier. The following lemma, whose proof is given in Appendix \ref{sec::bias lower appendix}, formalizes this argument under Assumption \ref{as::exchangeable components}:
\begin{restatable}{lemma}{ExchCoordPlusCondNum}\label{lm::exchangeable coordinates plus condition number of A_k}
For any  $\gamma < 1$  there exists a constant $c$ that only depends on $\gamma$ and $\sigma_x$ such that if assumptions \ref{as::condition number of A_k}$(k, \delta, L)$, \ref{as::gamma}$(k, \gamma)$ and \ref{as::exchangeable components} are satisfied for some $L \geq 1$ and $k \in \{ 1, 2, \dots, p\}$, then \ref{as::lowest eigenvalue of A_-k}$(k, \delta +2e^{-n/c}, cL)$ is also satisfied.
\end{restatable}

When it comes to averaging over the prior given by the assumption  \ref{as::prior on theta^*}$(\bar{\theta})$, it just means that it is impossible to obtain a better lower bound without some specific knowledge of how signs of components of $\theta^*$ interact with the probability distribution of the data. 

\subsection{Connecting upper and lower bounds}
One slight inconvenience with our approach of imposing oracle control over the spectrum of $A_k$ via Assumption \ref{as::condition number of A_k}$(k, \delta, L)$ is the following: what if the oracle provides control for the wrong value of $k$? There can in principle be many values of $k$ for which such oracle control is possible, with not all of them giving the right point where the behaviour changes from "essentially low-dimensional" to "essentially high-dimensional". As an example, consider the isotropic setting with $p \gg n$: one can exclude any number $k$ of components such that $p-k \gg n$ and still be able to control the condition number. 

First of all, in accordance with the result of \cite{benign_overfitting}, the following theorem shows that the "right $k$" is the $k$ that is not larger than $k^*$.
\begin{restatable}[The lower bound is the same as the upper bound]{theorem}{uppersameaslower}
\label{th:upper same as lower main body}
Denote
\begin{align*}
\underline{B}&:= \sum_i\frac{\lambda_i|\theta^*_i|^2}{\left(1 + \frac{\lambda_i}{\lambda_{k+1}\rho_k}\right)^2},\\
\overline{B}&:= \|\theta^*_\ktoinf\|_{\Sigma_\ktoinf}^2 +  \|\theta_\uptok^*\|_{\Sigma_\uptok^{-1}}^2\left(\frac{\lambda + \sum_{i > k} \lambda_i}{n}\right)^2,\\
\underline{V}&:= \frac{1}{n}\sum_{i }\min\left\{1, \frac{\lambda_i^2}{ \lambda_{k+1}^2(\rho_k + 1)^2}\right\},\\
\overline{V} &:= \frac{k}{n} + \frac{n\sum_{i > k} \lambda_i^2}{\left(\lambda + \sum_{i > k} \lambda_i\right)^2}.
\end{align*}
Fix constants $a> 0$ and $b > 1/n$. There exists a constant $c > 0$ that only depends on $a, b$, s.t.\ the following holds: if either $\rho_k \in (a, b)$  or $k = \min\{\kappa: \rho_\kappa > b\}$, then 
\[
c^{-1}\leq {\underline{B}}\mathbin{/}{\overline{B}} \leq 1, \quad
 c^{-1} \leq {\underline{V}}\mathbin{/}{\overline{V}} \leq 1.
\]
\end{restatable}
\begin{proof}
The proof is a rather straightforward comparison of pairs of sums term by term. It is given in Appendix \ref{subseq::upper is lower}.  
\end{proof}

Secondly, if the data is sub-Gaussian, then oracle control for any $k < n$ results in tight bounds, but with worse constants. This happens because of the following lemma.
\begin{restatable}[$k$ can be taken to be $k^*$]{lemma}{kandkstar}\label{lm::k star must give eigvals}
Fix any constants $\gamma \in [0, 1)$, $b > 0$, $L > 0$. Denote 
\[
k^* = \min\{k: \rho_k > b\}.
\]
There exist constants $c, L'$ that only depend on $\sigma_x$, $\gamma$, $b$, $L$ s.t. the following holds: suppose assumptions \ref{as::gamma}$(k, \gamma)$ and \ref{as::condition number of A_k}$(k, \delta, L)$ hold for some $k \in [k^*, n]$. Then assumptions \ref{as::gamma}($k^*, \gamma$) and  \ref{as::condition number of A_k}$(k^*, \delta + ce^{-n/c}, L')$ hold too.
\end{restatable}
\begin{proofsketch}
Since $k \geq k^*$, $\mu_n(A_k)$ provides a lower bound for $\mu_n(A_{k^*})$. When it comes to $\mu_1(A_{k^*})$, it can be bounded with high-probability because the data is sub-Gaussian. The full proof is given in Appendix \ref{sec::singular values}.
\end{proofsketch}

\subsection{The role of sub-Gaussianity}
\label{sec::role of sub-Gaussianity}
As can be seen from the proof of Theorem \ref{th::showcase upper}, the strategy to obtain a tight bound is the following: ask the oracle to control the condition number of $A_k$, if that $k$ is too large, shift it to $k^*$, and then apply the bound from Corollary \ref{cor::upper bound given L main body}. In Section \ref{sec::A_k heavy tailed} we showed that if the norm $\|x_\ktoinf\|$ concentrates, and the effective rank $r_{h,k}$ is high enough, then the control over the condition number of $A_k$ is possible even if we have very weak moment assumptions instead of sub-Gaussianity. Moreover, as we have discussed in the proof sketches, if we didn't shift from $k$ to $k^*$,  we would only need the usual concentration results such as the law of large numbers or concentration of $k$-dimensional empirical covariance matrix with $n$ samples, which also hold under weak moment assumptions. Therefore, sub-Gaussianity is not essential to obtain the bound in the form given in Corollary \ref{cor::upper bound given L main body}, one just needs to substitute the sub-Gaussian concentration results with their heavy-tailed analogues.  However it may not necessarily give a tight result unless the oracle is guaranteed to choose the appropriate $k$ (e.g., $k = k^*$). To shift from $k$ to $k^*$ we also need an upper bound on $\|A_{k^*}\|$, which we derive from sub-Gaussianity. According to Section \ref{sec::A_k heavy tailed}, an analogous bound is still possible under weak moment assumptions, but additional work is required: to use Theorem \ref{th::A_k heavy tailed} for $k = k^*$ one would need to obtain a high-probability upper bound on $\|x_{k^*:\infty}\|$ under moment assumptions and to relate $r_k$ which we use in definition of $k^*$ to $r_{h,k}$, which is introduced in Theorem \ref{th::A_k heavy tailed}. 

\begin{comment}
One can suspect that the necessity of sub-Gaussianity may be swept under the rug because of Assumption \ref{as::condition number of A_k}$(k, \delta, L)$. We don't believe this to be the case. Some results that suggest this are asymptotic Bai-Yin Theorem  that states that only bounded 4th moment is necessary for the maximum and  minimum singular values of a random matrix to converge almost surely to the edges of the support of its limiting spectral distribution (see \cite{10.1214/aop/1022855421}), and Theorem 5.62 in \cite{Vershynin_2012} in which smallest and largest singular matrix with heavy-tailed columns are controlled in expectation in a non-asymptotic regime. 
We see finding the weakest conditions under which \ref{as::condition number of A_k}$(k, \delta, L)$ is satisfied as an interesting direction of future research.
\end{comment}

\section{Alternative forms of the bounds and effect of increasing regularization}
\label{sec::alternative form of the bound}
\subsection{Alternative form of the bound and its relation to classical in-sample analysis}
\label{sec::relation to in-sample} 
Theorem \ref{th:upper same as lower main body} reveals an alternative form of the bounds: when $\rho_k$ is lower- and upper-bounded by constants or when $k = k^*$, the bounds on the bias and variance respectively become  equal to the following up to a constant multiplier:
 \begin{align}
\tilde{B} &:= \sum_{i=1}^p\lambda_i|\theta_i^*|^2 \frac{\rho_k^2\lambda_{k+1}^2}{\left(\rho_k\lambda_{k+1} + \lambda_i\right)^2},\label{eq::bias through weighted combination}\\
\tilde{V}&:=  \frac1n \sum_{i=1}^p\frac{\lambda_i^2}{\left(\rho_k\lambda_{k+1} + \lambda_i\right)^2}.\label{eq::variance through weighted combination}
\end{align}

These expressions closely resemble the classical expressions for the in-sample bias and variance of ridge regression. Indeed, a straightforward computation gives
\begin{align*}
&\frac{1}{n}\E_{\eps}\|X\hat{\theta} - X\theta^*\|^2\\
=& \frac{1}{n}\|(XX^\top (XX^\top + \lambda I_n)^{-1} - I_n)X\theta^*\|^2 + \frac{v_\eps^2}{n}\|XX^\top (XX^\top + \lambda I_n)^{-1}\|_F^2\\
=& \underbrace{\sum_{i=1}^p\hat{\lambda}_i\langle v_i, \theta^*\rangle^2 \frac{(\lambda/n)^2}{\left(\lambda/n + \hat{\lambda}_i\right)^2}}_{\text{in-sample bias}}+ v_\eps^2\underbrace{\frac1n \sum_{i=1}^p\frac{\hat{\lambda}_i^2}{\left(\lambda/n + \hat{\lambda_i}\right)^2}}_{\text{in-sample variance}},
\end{align*}
where $\{\hat{\lambda}_i\}_{i = 1}^p$ are eigenvalues of the empirical covariance $n^{-1} X^\top X$ and $\{v_i\}_{i = 1}^p$ are the corresponding eigenvectors. Recall that $\rho_k\lambda_{k+1}  = \left(\lambda + \sum_{i > k}\lambda_i\right)/n$. One can see that Equations \eqref{eq::bias through weighted combination}--\eqref{eq::variance through weighted combination} can be obtained from the classical equations for the in-sample risk by substituting the empirical eigenvalues with population eigenvalues and increasing the regularization level $\lambda$ by $\sum_{i > k}\lambda_i$ --- the energy of the tail of the data. 

Similarly, $\tilde{B}$ has an interpretation as the bias term of ridge regression with infinite data: for $\bar{\lambda} > 0$ denote $\theta^*_{\bar{\lambda}}$ to be the solution to the following "population ridge regression" problem:
\[
\theta^*_\lambda = \argmin_{\theta} \left[\E\|X\theta - y\|^2 + \bar{\lambda}\|\theta\|^2\right] = \left(\Sigma + \frac{\bar{\lambda}}{n} I_p\right)^{-1}\Sigma \theta^*.
\]
A straightforward computation gives
\[
\|\theta - \theta^*\|_{\Sigma}^2 = \sum_i \lambda_i |\theta_i^*|^2\frac{(\bar{\lambda}/n)^2}{(\lambda_i + \bar{\lambda}/n)^2},
\]
which is equal to $\tilde{B}$  when $\bar{\lambda} = n\lambda_{k+1}\rho_k = \lambda + \sum_{i > k}\lambda_i$.

\subsection{Dependence on $\lambda$}
The alternative form of the bounds presented in Section \ref{sec::relation to in-sample} provides a convenient way to investigate the dependence on $\lambda$, which is cumbersome in the initial form because increasing $\lambda$ may decrease $k^*$. This effect, however, is negligible when Equations  \eqref{eq::bias through weighted combination}--\eqref{eq::variance through weighted combination} are considered. Indeed, in Appendix \ref{sec::alternative form appendix} we show the following
\begin{restatable}{lemma}{nokinaltform}
\label{lm::change of k is negligible in alternative form}
Suppose $k < n/c$ for some $c > 1$ and $k^* < k$. Then 
\[
\lambda_{k+1}\rho_{k} \leq \lambda_{k^* + 1} \rho_{k*} \leq \lambda_{k+1}\rho_{k} /(1 - b^{-1}c^{-1}).
\]
\end{restatable}
Because of this lemma, any $k \in [k^*, n/c]$ gives the same result (up to a constant factor) in Equations  \eqref{eq::bias through weighted combination}--\eqref{eq::variance through weighted combination}. One can, therefore, start with some $\lambda$ and the corresponding $k =k^*$ and then consider larger values of $\lambda$ without decreasing $k$ in Equations  \eqref{eq::bias through weighted combination}--\eqref{eq::variance through weighted combination}. The result will give sharp (up to a constant factor) bounds, which depend on $\lambda$ as follows:
 \begin{align*}
\tilde{B} &= \sum_i\lambda_i|\theta_i^*|^2 \frac{n^{-2}\left(\lambda + \sum_{i > k}\lambda_i\right)}{\left(n^{-1}\left(\lambda + \sum_{i > k}\lambda_i\right) + \lambda_i\right)^2},\\
\tilde{V}&=  \frac1n \sum_i\frac{\lambda_i^2}{\left(n^{-1}\left(\lambda + \sum_{i > k}\lambda_i\right) + \lambda_i\right)^2},
\end{align*}
which are obtained by simply plugging in the definition of $\rho_k$ into  \eqref{eq::bias through weighted combination}--\eqref{eq::variance through weighted combination}.

A particularly interesting case arises when $\lambda$ is large enough that it dominates $\sum_{i > k}\lambda_i$ and all eigenvalues of $A_k$ are equal to $\lambda$ up to a constant multiplier. The corresponding result is given by the following corollary.
\begin{restatable}{cor}{lambdadominatestail}
\label{cor::lambda dominates the tail}
There is a large positive constant $c$ that only depends on $\sigma_x$ such that if
\[
\lambda > cn\lambda_{\lfloor n/c\rfloor} + 2\sum_{i > \lfloor n/c\rfloor}\lambda_i,
\]
then 
 \begin{align*}
B/c \leq& \sum_i\lambda_i|\theta_i^*|^2 \frac{(\lambda/n)^2}{\left(\lambda/n + \lambda_i\right)^2},\\
V/c \leq&   \frac1n \sum_i\frac{\lambda_i^2}{\left(\lambda/n + \lambda_i\right)^2}.
\end{align*}

\end{restatable}

\begin{proof}
The full proof is given in Appendix \ref{sec::alternative form appendix}; the following is its outline:
\begin{enumerate}
\item Use Lemma \ref{lm::controlling A_k iff small ball and rank} to control the eigenvalues of $A_{\lfloor n/c\rfloor}$.
\item Use Theorem \ref{th::showcase upper} to obtain the bounds for $k = k^*$.
\item Use  Theorem \ref{th:upper same as lower main body} to convert the bounds into the form given in Equations \eqref{eq::bias through weighted combination}--\eqref{eq::variance through weighted combination}. 
\item Use  Lemma \ref{lm::change of k is negligible in alternative form} to substitute $k^*$ back with $\lfloor n/c\rfloor$.
\item  Since $\lambda > 2\sum_{i > k}\lambda_i$, $\lambda/n$  is equal to $\rho_k\lambda_{k+1}$ up to a multiplicative constant.
\end{enumerate}
\end{proof}

Note that the statement of Corollary~\ref{cor::lambda dominates the tail} does not require the notion of $k^*$.

\subsection{Comparison with other results}
\label{sec::comparison with Hsu and Hastie}
As we saw in the previous section, the alternative form given by Equations \eqref{eq::bias through weighted combination}--\eqref{eq::variance through weighted combination} has milder dependence on the choice of $k^*$ than our main bounds \eqref{eq::bias bound main}--\eqref{eq::variance bound main} and allows to compare to classical results for in-sample error of ridge regression. 
In this section we use it to compare with more recent developments: the non-asymptotic bounds in \cite{Hsu_random_design_ridge} and \cite{hastie2020surprises}.

First of all, we follow \cite{Hsu_random_design_ridge} and introduce the following notion of effective dimension of the problem:
\[
d(\bar{\lambda}):= \sum_i \frac{\lambda_i}{\bar{\lambda} + \lambda_i},
\]
where $\bar{\lambda}$ is a parameter which can informally be understood as effective level of regularization.  \cite{Hsu_random_design_ridge}  provide non-asymptotic bounds for $B$ and $V$ in the regime when
\begin{equation}
\label{eq::Hsu_random_design_ridge lambda range}
n \geq cd(\lambda/n) \log(1 + d(\lambda/n)),
\end{equation}
(see their Theorem 2).\footnote{Note that in \cite{Hsu_random_design_ridge}, the scaling of the regularization parameter is different from ours: to express their results in our terms one needs to substitute their $\lambda$ by $\lambda/n$ in our notation.} The simplified version of their results given in Remark 17 gives the following bounds:\footnote{Note that under our assumptions, $\mathrm{approx}(x) = 0$, where  $\mathrm{approx}(x)$ is defined in Equation~(7) in \cite{Hsu_random_design_ridge}.}
\begin{align*}
B \leq& \left(1 + \frac{c(1 + d(\lambda/n))}{n}\right)\sum_i\lambda_i|\theta_i^*|^2 \frac{(\lambda/n)^2}{\left(\lambda/n + \lambda_i\right)^2},\\
V \leq&\frac{c}{n}\sum_i \frac{\lambda_i^2}{(\lambda/n + \lambda_i)^2},\\
\end{align*}
where $c$ is some constant that depends on the concentration properties of the data. 
This is the same as the result of Corollary \ref{cor::lambda dominates the tail}, but with different constants. However, our  Corollary \ref{cor::lambda dominates the tail} covers a wider range of $\lambda$ if $n$ is large enough. This follows from the following lemma, which is proven in Appendix~\ref{sec::alternative form appendix}:
\begin{restatable}{lemma}{invertstiltjes}
\label{lm::range of lambda through Stiltjes transform}
Suppose that $n \geq c^2 + c$ for some $c > 0$ and take
\[
\lambda = cn\lambda_{\lfloor n/c\rfloor} + 2\sum_{i > \lfloor n/c\rfloor}\lambda_i.
\]

Then \[d(\lambda/n) \geq \frac{n}{2\max(2, (c+1)^2)}.\]
\end{restatable}

Indeed, $d(\lambda/n)$ is a decreasing function of $\lambda$, and due to Lemma $\ref{lm::range of lambda through Stiltjes transform}$ the range of $\lambda$ for which Corollary \ref{cor::lambda dominates the tail} is applicable when $d(\lambda/n) = O(n)$, while Equation \eqref{eq::Hsu_random_design_ridge lambda range} restricts to the range $d(\lambda/n) = O(n/\log n)$.

After we posted the first preprint of this paper, the following non-asymptotic bound for the interpolating regime (i.e., $\lambda = 0$) appeared in \citep{hastie2020surprises}: informally
\[
|V - V_S|\leq  \frac{c}{n^{1/7}}, \quad |B - B_S|\leq \frac{c\|\theta^*\|^2}{n},
\]
 where $c$ is a constant, $V_S$ and $B_S$ are defined as\footnote{Here we introduce the notation $\tilde{\lambda}:= (\gamma c_0)^{-1}$, where $\gamma$ and $c_0$ are parameters used in \cite{hastie2020surprises}.}
\begin{align}
V_S:=& \tilde{\lambda}^{-1}  \frac{\sum_i \frac{\lambda_i^2}{(1 + \tilde{\lambda}^{-1}  \lambda_i)^2}}{\sum_i \frac{\lambda_i}{(1 + \tilde{\lambda}^{-1} \lambda_i)^2}}\label{eq::surprises variance},\\
B_S :=& \left(1 + V_S\right)\sum_i \frac{\lambda_i |\theta^*_i|^2}{(1 + \tilde{\lambda}^{-1} \lambda_i)^2}\label{eq::surprises bias},
\end{align}
 and $\tilde{\lambda}$ is the solution to the  equation $n = d(\tilde{\lambda}).$
See their Definition 1 and Theorem 2 for the exact statement.\footnote{Note that there is a typo in their definition of $\mathscr{V}$: a multiplicative factor of $c_0$ is missing.}

Note that because of the equation for $\tilde\lambda$
\[
{\sum_i \frac{\lambda_i^2}{(\tilde{\lambda} +  \lambda_i)^2}} + {\sum_i \frac{\tilde{\lambda}\lambda_i}{(\tilde{\lambda} + \lambda_i)^2}} = {\sum_i \frac{\lambda_i(\lambda_i + \tilde\lambda)}{(\tilde{\lambda} +  \lambda_i)^2}} = d(\tilde{\lambda}) = n.
\]
This allows us to rewrite  \eqref{eq::surprises variance}--\eqref{eq::surprises bias} as
\begin{align}
V_S:=&\frac{1}{1 - \frac{1}{n}\sum_i \frac{\lambda_i^2}{(\tilde{\lambda}  + \lambda_i)^2}}\cdot \frac{1}{n} \sum_i \frac{\lambda_i^2}{(\tilde{\lambda}  + \lambda_i)^2}\label{eq::Hastie_variance_for_comparison},\\
B_S :=& \left(1 + V_S\right)\sum_i \lambda_i |\theta^*_i|^2 \frac{\tilde{\lambda}^2}{(\tilde{\lambda} +  \lambda_i)^2}\label{eq::Hastie_bias_for_comparison}.
\end{align}

Comparing these equations with \eqref{eq::bias through weighted combination}--\eqref{eq::variance through weighted combination} reveals that they are the same up to a constant multiplier whenever $\tilde{V}\leq 1 - 1/c$ for some constant $c$ and $\rho_k\lambda_{k+1}$ is up to a constant equal to $\tilde{\lambda}$. In the following, we show that this is indeed the case.

Recall that these results from \citep{hastie2020surprises} are for the interpolating regime, i.e., $\lambda = 0$. Let's see how $\tilde{\lambda}$ is related to $\lambda_{k+1}\rho_k$. The connection is given by the following lemma.
\begin{lemma}
Suppose that $k < n/c$ and  $\rho_k > c$ for some constant $c > 1$ . Then 
\[
\frac{\tilde\lambda}{\lambda_{k+1}\rho_k} \in \left(1 - \frac1c, \frac{1}{1 - \frac1c} \right).
\]
\end{lemma}
\begin{proof}
Denote $a = \frac{\tilde\lambda}{\lambda_{k+1}\rho_k}$. Then we can write
\begin{equation*}
n = \sum_i \frac{\lambda_i}{\lambda_i + a\lambda_k \rho_k}
\geq \sum_{i > k} \frac{\lambda_i}{\lambda_{k+1} (a\rho_k + 1)}
=  \frac{n\rho_k}{a\rho_k + 1},\\
\end{equation*}
which implies $a\rho_k + 1 \geq \rho_k$, so $a \geq 1 - 1/\rho_k > 1 - 1/c$.

For the upper bound on $a$ we write
\begin{equation*}
n = \sum_i \frac{\lambda_i}{\lambda_i + a\lambda_k \rho_k}
\leq k + \sum_{i > k} \frac{\lambda_i}{a\lambda_{k+1} \rho_k }
= k +  \frac{n}{a},\\
\end{equation*}
which gives $a \leq n/(n-k) < c/(c-1)$.
\end{proof}

The similarity of Equations \eqref{eq::Hastie_variance_for_comparison}--\eqref{eq::Hastie_bias_for_comparison} with our results should not be taken for granted, and it is actually quite surprising.  As we explain in Section \ref{sec::detailed comparison}, the regime considered in \cite{hastie2020surprises} is significantly different, so it is rather unclear why the results would have the same form.

\section{Negative regularization}
\label{sec::negative regularization main body}

The aim of this section is to find a family of regimes in which the optimal level of ridge regularization is negative. 
Since we are comparing different values of $\lambda$ in this section, the following notation will be useful: recall that for any $k$
\[
\rho_k(0) := \frac{1}{n\lambda_{k+1}}\sum_{i > k}\lambda_i,
\]
the value of $\rho_k$ for $\lambda = 0$. Intuitively, the components of the tail provide regularization for the first $k$ components, and the larger $\rho_k$ is, the more is that regularization. Thus, one could expect that if there is an abrupt jump in the sequence  $\{\rho_k(0)\}_{k=0}^p$, then that additional regularization is too large and negative $\lambda$ may be optimal.

 As we investigate further, a jump in $\rho_k(0)$ is indeed one of the sufficient conditions for optimality of negative regularization, but not the only one: the strength of the noise and how the signal is distributed among the principal components of the data also play an important role.

We start the discussion with several informal observations. The first observation one can make is that $V$ is a decreasing function of $\lambda$: indeed, $ V = \tr(\Sigma^{1/2} X^\top A^{-2} X \Sigma^{1/2})$ and increasing $\lambda$ increases all eigenvalues of $A$.
Thus, negative regularization cannot help with damping the noise compared to non-negative regularization, and the noise should be sufficiently small in order for negative regularization to be beneficial. 

Now let's look at the role of the signal in the tail. It contributes to error in two ways: first --- the components in the tail are not getting estimated themselves, second --- the signal that comes from those components acts as additional noise for estimation of the first $k$ components. When $\lambda$ is non-negative, the error of the first type dominates the error of the second type, but negative $\lambda$ can amplify the noise and result in error of the second type dominating. Therefore, the signal in the tail also needs to be sufficiently small in order for negative regularization to be optimal.

The final observation is the following: since we only compute the bounds up to a constant multiplier, the bound in Theorem \ref{th::showcase upper} cannot distinguish between negative and zero regularization. To see this, consider the form of the bound given in Section \ref{sec::alternative form of the bound}: up to a constant factor the bound is a weighted combination in each component with weight $\lambda + \sum_{i > k}\lambda_i$, and as $\lambda$ increases there is no need to change $k$. Now it is easy to see that for all $\lambda$ in range from $-\gamma \sum_{i > k}\lambda_i$ to zero, that weight is the same up to a constant factor. Thus, negative regularization can only decrease the excess risk by more than a constant factor in the critical regime, i.e., $\lambda = -\sum_{i > k} \lambda_i + \Diamond$ where $\Diamond$ is of smaller order than $\sum_{i > k} \lambda_i$. To consider such $\lambda$ and have $A_k$ PD we need tight concentration of eigenvalues of  $X_\ktoinf X_\ktoinf^\top$ around $\sum_{i > k}\lambda_i$. To ensure such tight control we restrict ourselves to the case of independent components, i.e., when Assumption \ref{as::independent components} is satisfied. In this case, the eigenvalues of $X_\ktoinf X_\ktoinf^\top$ can be bounded according to the following statement that was shown as an intermediate step in the proof of Lemma S.9 in \citep{benign_overfitting}.
\begin{lemma}
\label{lm::tight control of eigenvalues for independent coordinates}
Under assumption \ref{as::independent components} there exists a constant $c$ that only depends on $\sigma_x$ s.t. with probability at least $1 - ce^{-n/c}$,
\begin{align*}
\mu_1(X_\ktoinf X_\ktoinf^\top) \leq& \sum_{i > k} \lambda_i + c\left(n\lambda_{k+1} + \sqrt{n\sum_{i > k} \lambda_i^2}\right),\\
\mu_n(X_\ktoinf X_\ktoinf^\top) \geq& \sum_{i > k} \lambda_i - c\left(n\lambda_{k+1} + \sqrt{n\sum_{i > k} \lambda_i^2}\right).
\end{align*}
\end{lemma}

The fluctuations $n\lambda_{k+1} + \sqrt{n\sum_{i > k} \lambda_i^2}$  will be of smaller order than $\sum_{i > k} \lambda_i$ if $\rho_k(0)$ is larger than a constant, which is shown by the following  bounds:
\begin{gather}
n\lambda_{k+1} =\frac{1}{\rho_k(0)}\sum_{i > k} \lambda_i,\label{eq::eigenvalue deviation via rho_k first} \\
\sqrt{n\sum_{i > k} \lambda_i^2 }\leq \sqrt{n\lambda_{k+1}  \sum_{i > k} \lambda_i } = \frac{1}{\sqrt{\rho_k(0)}}\sum_{i > k} \lambda_i\label{eq::eigenvalue deviation via rho_k second}.
\end{gather}

Using this lemma allows us to obtain  following two lemmas. See Appendix \ref{sec::negative regularization appendix} for the proofs. 
\begin{restatable}[Lower bound on the bias for any non-negative regularization]{lemma}{uniformlower}
\label{lm::non-negative regularization uniform lower bound main body}
There exist constants $b, c$ that only depend on $\sigma_x$  such that the following holds: suppose that assumptions \ref{as::independent components} and \ref{as::prior on theta^*}$(\bar{\theta})$ hold. Take $k = \min\{\kappa: \rho_\kappa(0) > b\}$ and suppose that $k > 0$. Then with probability at least $1 - ce^{-n/c}$ for any $\lambda \geq 0$
\[
\E_{\theta^*}B \geq  \frac{1}{c}\|\bar{\theta}_\uptok\|_{\Sigma_\uptok^{-1}}^2\frac{\left(\sum_{i > k} \lambda_i\right)^2}{n^2}.
\]
\end{restatable}

\begin{restatable}[Upper bound on excess risk for some negative regularization]{lemma}{negreggeneralization}
\label{lm::negative regularization generalization bound main body}
There exists a constant $c$ that only depends on $\sigma_x$ such that the following holds: suppose that assumptions \ref{as::prior on theta^*}$(\bar{\theta})$ and \ref{as::independent components}  hold and that $\rho_k(0) > c$ for some $k < n/c$. Assume also that 
\begin{equation}
\label{eq::low noise condition for negative lambda bound}
v_\eps^2 \leq\frac{1}{c}\|\bar{\theta}_\uptok\|_{\Sigma_\uptok^{-1}}^2 \frac{\left(\sum_{i > k} \lambda_i\right)^2}{n^3\left(\sum_{i > k}\lambda_i^2\right)^2}.
\end{equation}
Then there exists such $\lambda < 0$ that with probability at least $1 - ce^{-n/c}$

\begin{align*}
\E_{\theta^*}B + v_\eps^2 V \leq& c\left(v_\eps^2\frac{k}{n} + v_\eps \|\bar{\theta}_\uptok\|_{\Sigma_\uptok^{-1}}\sqrt{\frac{\sum_{i > k} \lambda_i^2}{n}}+ \|\bar{\theta}_\uptok\|_{\Sigma_\uptok^{-1}}^2\frac{\lambda_{k+1}\sum_{i > k} \lambda_i}{n}   + \|\bar{\theta}_\ktoinf\|_{\Sigma_\ktoinf}^2  \right).
\end{align*}

\end{restatable}

Lemma \ref{lm::non-negative regularization uniform lower bound main body} provides a lower bound on the expected (over noise and $\theta^*$) excess risk which holds w.h.p. uniformly over all non-negative $\lambda$. Lemma \ref{lm::negative regularization generalization bound main body} provides an upper bound that can be achieved by some negative $\lambda$. Combining these two lemmas gives a sufficient condition for the optimal $\lambda$ to be negative, which is given by the following theorem.
\begin{theorem}
\label{th::negative regularization main main body}
There exist constants $b$ and $c$ that only depend on $\sigma_x$ such that the following holds. Suppose that assumptions \ref{as::prior on theta^*}$(\bar{\theta})$ and \ref{as::independent components} hold. Take $k = \min\{\kappa: \rho_\kappa(0) > b\}$ and suppose that $k < n/c$. The value of $\lambda$ that minimizes $\E_{\theta^*}B + v_\eps V$ will be negative with probability at least $1 - ce^{-n/c}$ if the following conditions are satisfied:
\begin{align*}
\text{small noise:}&& v_\eps^2 \leq& \frac{\|\bar{\theta}_\uptok\|_{\Sigma_\uptok^{-1}}^2}{c} \min\left( \frac{\left(\sum_{i > k}\lambda_i\right)^2}{nk}, \frac{\left(\sum_{i > k}\lambda_i\right)^4}{n^3\sum_{i > k}\lambda_i^2}\right), \\
\text{jump in effective rank:}&&\rho_k(0) >& c,\\
\text{small signal in the tail:}&&\|\bar{\theta}_\ktoinf\|_{\Sigma_\ktoinf}^2  \leq&\frac{1}{c}\|\bar{\theta}_\uptok\|_{\Sigma_\uptok^{-1}}^2\left(\frac{\sum_{i > k}\lambda_i}{n}\right)^2.\\
\end{align*}
\end{theorem}
\begin{proof}
It is easy to see that by taking $c$ large enough, the conditions of Lemmas \ref{lm::negative regularization generalization bound main body} and \ref{lm::non-negative regularization uniform lower bound main body} are satisfied, and the upper bound in Lemma \ref{lm::negative regularization generalization bound main body} becomes lower than the lower bound in Lemma \ref{lm::non-negative regularization uniform lower bound main body}.
\end{proof}

We see that the conditions indeed align with the intuition outlined in the beginning of this section: we need small variance, small signal in the tail, and a sharp jump in effective rank. However, we do not have matching lower bounds in the critical regime when Assumption \ref{as::gamma}$(k, \gamma)$ is not satisfied for a constant $\gamma > 1$. Thus, we don't know whether these conditions are also necessary.

\section{Comparison to other works}
\label{sec::detailed comparison}

As we mentioned in Section \ref{sec::related work}, recently there has been a number of papers studying population risk of interpolating solutions of linear regression, and we gave a rough split of those results into three categories there. Here we elaborate on the comparison between the approaches and results.

Results from the first category \citep{Dobriban2015HighDimensionalAO, Hastie2019SurprisesIH, wu2020optimal, richards2020asymptotics} compute exact asymptotic expressions for the excess risk assuming that $p/n$ goes to some constant as $p, n$ go to infinity, and that the spectral distribution of $\Sigma$ converges to some limiting distribution. From the point of view of our approach, such distributions are indistinguishable from isotropic: indeed, the very existence of limiting spectral measure implies that almost all eigenvalues are within a constant factor of each other.  Many of those works even assume explicitly that the spectrum of $\Sigma$ is upper- and lower-bounded by two constants \citep[page 7]{richards2020asymptotics}, \citep[Assumption 1]{wu2020optimal}, \citep[Theorem 3]{Hastie2019SurprisesIH}.  Our results don't need any asymptotic set up, and apply to $p = \infty$ with some fixed summable sequence $\lambda_i$, which has no meaningful notion of limiting distribution, and  no separation from zero is needed. For example, our setup covers kernel regression with a fixed kernel and increasing number of data points. On the other hand, when all $\lambda_i$ are within a constant factor of each other, our lower bounds become $B \geq \|\theta^*\|_\Sigma/c$ and $V \geq 1/c$, so the constant part of the whole signal doesn't get learned and the variance term is at least a constant, i.e., the asymptotic expressions obtained in the works from this category are all just different constants and our approach cannot distinguish them.  Therefore, we answer significantly different questions: while the asymptotic work distinguishes between constant error rates, we investigate when the error can be less than a constant. The final difference with our work is rather technical but quite strong: all the works in this category assume that the coordinates of the data become independent if multiplied by the inverse square root of the covariance.  This assumption stems from asymptotic random matrix theory techniques, on which these papers are based. To the best of our knowledge, it is not known how to extend these techniques beyond random matrices with independent elements. Our approach, however, does not require the coordinates to be independent. 

When it comes to the second category, featurized or kernel regression \citep{ montanari2020interpolation, ghorbani2020neural, mei2019generalization, ghorbani2020linearized,liang2020multiple}, the difference from our approach is that we do not assume any particular mechanism for data generation or how the features are constructed, but we directly make assumptions about feature vectors. Our results can in principle be applied in this setting if one computes the spectrum of the population covariance for particular features or kernels and the corresponding sub-Gaussian norms. The major difficulty that precludes such a direct comparison is that that computation is not straightforward. The works from this category operate in a more particular setting and circumvent the computation of the spectrum of $\Sigma$. On the other hand, it is not hard to trace strong similarities with our approach on the level of the proof. First of all,  all the papers in this category that we are aware of assume that the data comes from a very regular distribution: either  $d$-dimensional isotropic data with i.i.d. coordinates \citep[Assumption 1]{liang2020multiple}, or data from the uniform distribution on the sphere \citep[abstracts]{mei2019generalization, ghorbani2020linearized}, \citep[Section 3.2]{montanari2020interpolation}, or  data from the product of two uniform distributions on spheres  \citep[ Section 2.1]{ghorbani2020neural}. Second, in all those papers the kernel is either spherically symmetric \citep[Section 2.2]{ghorbani2020neural}, \citep[Equation 4]{liang2020multiple} or close to being spherically symmetric due to isotropic initialization of the neural network or isotropic choice of random features  \citep[Assumption 1]{ghorbani2020linearized}, \citep[ Thorem 2]{mei2019generalization}, \citep[Section 3.2]{ montanari2020interpolation}. After that, they consider the regime where $n$ is large compared to $d^\alpha$ for some $\alpha$ \citep[Assumption 1]{ montanari2020interpolation}, \citep[Theorem 1]{ ghorbani2020neural}, \citep[abstracts]{ mei2019generalization, ghorbani2020linearized,liang2020multiple}\footnote{In \citep{ mei2019generalization} $\alpha = 1$.}. Finally, all those papers  derive that kernel regression works effectively as ridge regression with polynomial features up to degree $\alpha $  \citep[abstracts]{montanari2020interpolation, ghorbani2020linearized}, \citep[Theorem 1]{ ghorbani2020neural}, \citep[Proposition 1 and Section 2.3]{liang2020multiple}. The only exception is \citet{mei2019generalization}, who derive asymptotic expressions for excess risk when the true function is affine (i.e., a polynomial of degree $1$) plus Gaussian misspecification. The connection with our results is that in such a regime (uniform distribution on the sphere, spherically symmetric kernel) polynomials are exactly the eigenfunctions of the kernel operator, which plays the role of the covariance operator, and there are $k \approx d^\alpha$ of polynomials of degree at most $\alpha$. Thus, their approach is similar to ours: separate the first $k$ eigendirections (or their approximations) and show that other directions act as regularization. 

The third category is where this paper belongs, so a more concrete comparison to other results is possible. Sections \ref{sec::benign overfitting story} and \ref{sec::our contribution intuition}  provide a detailed explanation of how our work generalizes the work of \citet{benign_overfitting}. \citet{Kobak2020OptimalRP} proved that negative ridge regularization is optimal in a spiked covariance model with one spike, which is a simple particular case with $k=1$ of our results. In Section \ref{sec::negative regularization main body}, we showed that negative regularization is optimal under a rich set of covariance structures, and gave general sufficient conditions.  \citet{chinot2021robustness} obtain non-asymptotic bounds for bias and variance in the ridgeless setting. They assume Gaussian data and the existence of $k^*$, which means that our results apply in their setting. Our bound for the bias term is tight, so it cannot be worse than theirs by more than a constant multiplier. At the same time, their bound on the bias term can be much worse than ours: note that their bound depends on $\|\theta^*\|$, while our bound scales with $\|\theta^*\|_\Sigma$, therefore their bound can be arbitrarily close to infinity while our bound stays finite. When it comes to the variance term, the bound of \citet{chinot2021robustness} is larger but holds with smaller probability, as they discuss when they compare their results to those in \citet{benign_overfitting}. \citet{Derezinski} start with an arbitrary covariance matrix and construct a specific data distribution for which the approximation error $\E\|\hat{\theta} - \theta^*\|^2$ has an explicit expression. We provide bounds for the excess risk $\|\hat{\theta} - \theta^*\|^2_\Sigma$, so our results are not directly comparable to theirs.  \citet{derezinski2020precise} consider expectation of the projector on the orthogonal complement to the span of i.i.d. data with arbitrary covariance and  derive tight upper and lower bounds for it with respect to Loewner order. The bias term in our setting is exactly such a projection of $\theta^*$, but measured in $\|\cdot\|_\Sigma$. Because of this mismatch in the norm, the results of \citet{derezinski2020precise} do not translate into our results  directly, even if we consider the expectation of the bias term.%\pbnote{This is probably too detailed and would be better summarized.}\atnote{made it shorter}

\section{Conclusions }
\label{sec::conclusions}
We studied the excess risk of ridge regression and showed how geometry of the data can influence both which part of the signal is learned and how the noise is damped. For a range of values of the regularization parameter we showed that learning can be seen as the composition of two parts: classical ridge regression in the first $k$ components (the "essentially low-dimensional part") and learning the zero estimator in the rest of the components (the "essentially high-dimensional part"). We introduced a general assumption under which the data is “essentially high-dimensional”, and provided geometric sufficient conditions for its satisfaction. Moreover, we investigated the regime in which the “essentially high-dimensional part” is too high-dimensional, and derived general sufficient conditions for negative regularization to be optimal: small noise, small energy of the  "essentially low-dimensional part", but an abrupt jump in the effective rank.

On the technical side, our proof decouples cleanly into an algebraic part, which holds with probability 1 for non-negative regularization,\footnote{For the case of negative regularization we need to condition on the event that all the necessary symmetric matrices are PD.} and the probabilistic part, where we plug in well-known concentration results from high-dimensional probability. This makes it easy to trace how different terms in the bound correspond to the parts of the estimator, and supports the geometric interpretation given above.

We provided a thorough overview of the related papers, and explained how our results are significantly different from them despite some optical similarities. Those similarities, however, are intriguing,  and hint at the task of developing a unified treatment of different regimes of overparameterized linear regression as a promising direction of future work.

\section*{Acknowledgements}
We gratefully acknowledge the support of the NSF through grants DMS-2023505 and DMS-2031883 and of the Simons Foundation through award \#814639.

\appendix
\newpage

\section{Definitions and Notation}
\label{sec::notations appendix}
\subsection{Sub-Gaussianity}
\label{sec::sub-Gaussianity appendix}
A random variable $z$ is sub-Gaussian if it has a finite sub-Gaussian norm 
\[
\|z\|_{\psi_2}:=\inf\left\{t>0:\E\exp(z^2/t^2)\le 2\right\}.
\]
The sub-Gaussian norm of a random vector $Z$ is 
\[
\|Z\|_{\psi_2}:=\sup_{s\not=0}\|\langle s,Z\rangle/\|s\|\rangle\|_{\psi_2}.
\]

\subsection{Standard mathematical objects}
\begin{itemize}
\item $M[i,j]$ denotes the element of the matrix $M$ which stands at the intersection of the $i$-th row and $j$-th column.
\item $\|v\|$ denotes the Euclidean norm for a vector $v$ in $\R^d$ for some $d$, .
\item $\|M\|$ denotes the operator norm (i.e., maximum singular value)  for a matrix $M$ in $\R^{m \times n}$ for some $m, n$.
\item $\tr(M)$ denotes the trace of a square matrix $M$.
\item $\|M\|_F $ denotes the  Frobenius norm for  a matrix $M$ in $\R^{m \times n}$,  i.e., $\|M\|_F := \sqrt{\tr(MM^\top)}$.
\item $\Sc^{p-1}$ denotes the unit sphere in $\R^p$, i.e., $\Sc^{p-1} = \{x \in \R^p: \|x\| = 1\}$.
\item $I_m$ is the $m\times m$ identity matrix.
\item $\|v\|_M := \sqrt{v^\top Mv}$ for any positive semidefinite (PSD) matrix $M \in \R^{m\times m}$ and any $v \in \R^m$.
\item $\mu_1(M)\ge\cdots\ge\mu_m(M)$ are the eigenvalues of a symmetric matrix $M\in \R^{m \times m}$ in decreasing order.
\end{itemize}

\subsection{Data and the learning procedure}

Recall from Section~\ref{sec::ridge setup main body} that 
\begin{itemize}
\item $X \in \R^{n\times p}$ --- a random matrix with i.i.d. centered rows.
\item $y = X\theta^* + \eps$ is the response vector, where $\theta^* \in \R^p$ is some unknown vector, and $\eps$ is noise,
\item components of $\eps$ are independent and have variance $v_\eps$, 
\item $\{x^i\}_{i = 1}^n$ are columns of $X^\top$ (i.e., $\{x^i\}_{i = 1}^n$ are our i.i.d. data points in $\R^p$).
\item $x$ denotes a new random draw from the data distribution, i.e., $x$  is independent from $X, \eps$ and $x$ has the same distribution as $x^1$.
\item $\Sigma = \diag(\lambda_1, \dots, \lambda_p)$ is the covariance matrix of a row of $X$.
\item $Z = X\Sigma^{-1/2}$, $\{z_i\}_{i=1}^p$ are columns of $Z$.
\item the rows of $Z$ are sub-Gaussian with sub-Gaussian norm at most $\sigma_x$,

\item ridge regression outputs $\hat{\theta}( y) := X^\top(\lambda I_n + XX^\top)^{-1}y$.
\end{itemize}

\subsection{Splitting the coordinates}
For some $k < n$ we  spit the coordinates into two groups: the first $k$ components and the rest of the components. Thus we introduce the following notation. Consider integers $a, b$ from $0$ to $\infty$ (we always either take $a=0$ and $b=k$ or $a=k$ and $b = \infty$).  
\begin{itemize}
\item For any matrix $M\in \R^{n \times p}$ denote $M_{a:b}$ to be the matrix that is comprised of the columns of $M$ from $a+1$-st to $b$-th. 
\item For any vector $\eta \in \R^p$ denote $\eta_{a:b}$ to be the vector comprised of components of $\eta$ from $a+1$-st to $b$-th. 
\item $\Sigma_\uptok = \diag(\lambda_1, \dots, \lambda_k),$ and $\Sigma_\ktoinf = \diag(\lambda_{k+1}, \lambda_{k+2}, \dots).$ 
\item $A_k = \lambda I_n + X_\ktoinf X_\ktoinf^\top$.
\item $r_k = \frac{1}{\lambda_{k+1}}\left(\lambda + \sum_{i > k}\lambda_i\right).$
\item $\rho_k = r_k/n$.
\item $\rho_k(0) = \frac{1}{n\lambda_{k+1}}\sum_{i > k}\lambda_i$.
\item For any $i$ we denote $A_{-i} = A_{-i} := X_{0:i-1}X_{0:i-1}^\top + X_{i:\infty}X_{i:\infty}^\top + \lambda I_n.$
\end{itemize}

\section{Ridge regression}
\label{sec::ridge appendix}
We are interested in evaluating the MSE of the ridge estimator. For positive regularization parameter $\lambda$ that estimator is defined as
\begin{align*}
\hat{\theta}(y) = \hat{\theta}( y) =& \argmin_\theta\left\{\|X\theta - y\|_2^2 + \lambda\|\theta\|_2^2\right\}\\
=& \left(\lambda I_p + X^\top X\right)^{-1}X^\top y.\\
\end{align*}

In the overparametrized case (i.e., $p > n$), however, the latter expression has a singularity at zero, because the matrix $X^\top X$ does not have full rank. If $\lambda = 0$ the solution to the minimization problem above is not unique. Moreover, if $\lambda < 0$, no solution exists at all because we are minimizing a quadratic form whose matrix has negative singular values. To alleviate these issues and extend the definition of the solution to non-positive values of $\lambda$, we propose the following: since the matrix $X^\top X$ doesn't have full rank, we can apply the Sherman-Morrison-Woodbury formula:
\[
\left( \lambda I_p + X^\top X\right)^{-1} = \lambda^{-1}I_p   - \lambda^{-2}X^\top(I_n + \lambda^{-1}XX^\top)^{-1}X.
\]
So, 
\begin{align*}
\hat{\theta}( y) &= \left(\lambda I_p + X^\top X\right)^{-1}X^\top\\
=& \lambda^{-1}X^\top - \lambda^{-2}X^\top(I_n + \lambda^{-1}XX^\top)^{-1}XX^\top\\
=& \lambda^{-1}X^\top - \lambda^{-1}X^\top(I_n + \lambda^{-1}XX^\top)^{-1}(\lambda^{-1}XX^\top + I_n - I_n)\\
=& \lambda^{-1}X^\top(I_n + \lambda^{-1}XX^\top)^{-1}\\
=& X^\top(\lambda I_n + XX^\top)^{-1}y.
\end{align*}

The matrix $XX^\top$ has full rank, and the expression above is continuous in $\lambda$ as long as $XX^\top + \lambda I_n$ stays PD. When $\lambda = 0$, $X^\top(\lambda I_n + XX^\top)^{-1}y$ is the minimum norm interpolating solution (the same solution that was considered in \citep{benign_overfitting}. Therefore, we use the expression
\begin{equation*}
\hat{\theta}( y) := X^\top(\lambda I_n + XX^\top)^{-1}y
\end{equation*}
to define the ridge regression solution for any $\lambda > -\mu_n(XX^\top)$. 

Note that $\hat{\theta}( y) $ is linear in $y$. Since we have $y = X\theta^* + \eps $ we can also write
\[
\hat{\theta}( y) = \hat{\theta}( X\theta^*) + \hat{\theta}( \eps). 
\]
The first term is the noiseless estimate; its error gives the bias term. The second term is the estimate obtained when the signal is pure noise. It gives the variance term. 

For the full MSE we have
\begin{align*}
\|\hat{\theta}( y) - \theta^*\|_\Sigma^2 =& \|\hat{\theta}( X\theta^*) + \hat{\theta}( \eps)- \theta^*\|_\Sigma^2\\
\leq& 2\|\hat{\theta}( X\theta^*)- \theta^*\|_\Sigma^2 + 2\|\hat{\theta}( \eps)\|_\Sigma^2\\
=&2(B + V_\eps),
\end{align*}
where we introduced bias $B$ and variance $V_\eps$:
\begin{alignat*}{3}
&B := &&\|\hat{\theta}( X\theta^*)- \theta^*\|_\Sigma^2 &&= \|(I_p - X^\top(\lambda I_n + XX^\top)^{-1}X)\theta^*\|^2_\Sigma,\\
&V_\eps := &&\|\hat{\theta}( \eps)\|_\Sigma^2 &&= \|X^\top(\lambda I_n + XX^\top)^{-1}\eps\|_\Sigma^2. 
\end{alignat*}

Finally, since $V_\eps$ is a quadratic form in $\eps$, by Lemma \ref{lm::quadratic form concentration} if the noise is sub-Gaussian, then its value is controlled by its expectation with high probability. That expectation, in its turn, scales linearly with the variance $v_\eps^2$ of the noise. Therefore, we can decouple the effect of the noise and only study the following purified variance term:
\begin{align*}
V :=& \frac{1}{v_\eps^2}\E_\eps V_\eps\\
=& \tr((\lambda I_n + XX^\top)^{-1}X\Sigma X^\top(\lambda I_n + XX^\top)^{-1})\\
=&\tr(\Sigma X^\top (\lambda I_n + XX^\top)^{-2} X).
\end{align*}

The main aim of our work is to give sharp non-asymptotic bounds for $B$ and $V$. 
\section{Concentration inequalities}

\begin{lemma}[Non-standard norms of sub-Gaussian vectors ] 
\label{lm::sub-Gauss_norm_upper}
Suppose $z$ is a sub-Gaussian vector in $\R^p$ with $\|z\|_{\psi_2} \leq \sigma$.  Consider $\Sigma = \diag(\lambda_1, \dots, \lambda_p)$ for some positive non-increasing sequence $\{\lambda_i\}_{i=1}^p$. Then for some absolute constant $c$ for any $t > 0$
\[
\Pbb\left\{\|\Sigma^{1/2} z\|^2 > c\sigma^2\left(t\lambda_1 + \sum_i \lambda_i\right)\right\} \leq 2e^{-t/c}.
\]
\end{lemma}
\begin{proof}
The argument consists of two parts: first, we obtain a bound that only works well in the case when all $\lambda_i$ are approximately the same. Next, we split the sequence $\{\lambda_i\}$ into pieces with approximately equal values within each piece and obtain the final result by applying the first part of the argument to each piece.

{\bf First part:} Consider a $1/4$-net $\{u_j\}_{j=1}^m$ on $\Sc^{p-1}$, such that $ m \leq 9^p$.  Note that for any vector $v \in \Sc^{p-1}$ there exists an element $u_j$ of that net such that $\langle v, u_j\rangle \geq {3}/{4}\cdot \|v\|$. Thus, we have
\[
\|\Sigma^{1/2} z\| \leq \frac{4}{3} \sqrt{\lambda_1}\max_j \langle z, u_j\rangle \leq 2\sqrt{\lambda_1}\max_j \langle z, u_j\rangle.
\]

Since the random variable $\langle z, u_j\rangle$ is $\sigma$-sub-Gaussian, it also holds for any $t > 0$ and some absolute constant $c$ that 
\begin{align*}
\Pbb(|\langle z, u_j\rangle| > t) &\leq 2e^{-ct^2/\sigma^2}, \\
\Pbb(4\lambda_1\langle z, u_j\rangle^2 > 4\lambda_1t\sigma^2) &\leq 2e^{-ct}.
\end{align*}
By multiplicity correction, we obtain
\[
\Pbb\left(\|\Sigma^{1/2} z\|^2 > 4\lambda_1\sigma^2 t + \frac{4\sigma^2\lambda_1\log 9}{c}p\right) \leq 2e^{-ct}.
\]

We see that the random variable $\left(\|\Sigma^{1/2} z\|^2 - \frac{4\sigma^2\lambda_1\log 9}{c}p\right)_+$ has sub-exponential norm bounded by $C\sigma^2\lambda_1$. 

{\bf Second part:} Now, instead of applying the result that we have just obtained to the whole vector $z$, split it in the following way: define the sub-sequence $\{i_j\}$ in such that $i_1 = 1$, and for any $l \geq 1$ $i_{l + 1} = \min\{i: \lambda_i < \lambda_{i_l} / 2\}$. Denote $z_l$ to be a sub-vector of $z$ comprised of components from the $i_l$-th to $(i_{l+1}-1)$-th. Let $\Sigma_l = \diag(\lambda_{i_l}, \dots, \lambda_{i_{l+1}-1}).$ 

Then by the initial argument, the random variable $\left(\|\Sigma_l^{1/2} z_l\|^2 - \frac{4\sigma^2\lambda_{i_l}\log 9}{c}(i_{l+1}-i_l)\right)_+$ has sub-exponential norm bounded by $C\sigma^2\lambda_{i_l}$. Since each next $\lambda_{i_l}$ is at most half of the previous, we obtain that the sum (over $l$) of those random variables has sub-exponential norm at most $2C\sigma^2\lambda_1.$ Combining this with the fact that
\[
\sum_{i = i_l}^{i_{l+1}-1} \lambda_i \geq (i_{l+1}-i_l)\lambda_{i_{l+1}-1} \geq (i_{l+1}-i_l)\lambda_{i_{l+1}}/2,
\]
we obtain that for some absolute constants $c_0, c_1, \dots$ for any $t > 0$
\begin{align*}
2e^{-c_0t}
\geq& \Pbb\left\{\sum_l \left(\|\Sigma_l^{1/2} z_l\|^2 - c_1\sigma^2\lambda_{i_l}(i_{l+1}-i_l)\right) > c_2\sigma^2\lambda_1t\right\}\\
\geq&\Pbb\left\{\|\Sigma^{1/2} z\|^2\geq c_3\sigma^2\sum_i \lambda_i + c_2\sigma^2\lambda_1 t\right\}.
\end{align*}
\end{proof}

\begin{lemma}[Concentration of the sum of squared norms]
\label{lm::sum of norms}
Suppose $Z \in \R^{n \times p}$ is a matrix with independent isotropic sub-Gaussian rows with $\|Z[i, *]\|_{\psi_2} \leq \sigma$.  Consider $\Sigma = \diag(\lambda_1, \dots, \lambda_p)$ for some positive non-increasing sequence $\{\lambda_i\}_{i=1}^p$.  Then for some absolute constant $c$ and any $t \in (0,n)$ with probability at least $1 - 2\exp(-ct)$,
\[
(n-\sqrt{nt}\sigma^2)\sum_{i > k} \lambda_i \leq \sum_{i=1}^n\|\Sigma_\ktoinf^{1/2} Z_{i, \ktoinf}\|^2 \leq (n+\sqrt{nt}\sigma^2)\sum_{i > k} \lambda_i.
\]
\end{lemma}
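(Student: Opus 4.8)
The plan is to regard the left-hand side as a sum of $n$ i.i.d.\ scalar random variables, one per row, and to control it by a Bernstein-type bound whose single-row ingredients come from Lemma~\ref{lm::subgauss_norm_upper}. Writing $W_i := \|\Sigma_\ktoinf^{1/2} Z_{i,\ktoinf}\|^2 = Z_{i,\ktoinf}^\top \Sigma_\ktoinf Z_{i,\ktoinf}$, the variables $W_1,\dots,W_n$ are i.i.d.\ and non-negative, and by isotropy $\E W_i = \tr\Sigma_\ktoinf = \sum_{j>k}\lambda_j =: S$; so the target reduces to the two-sided estimate $\bigl|\sum_{i=1}^n W_i - nS\bigr| \le t\sigma^2 S$ with probability at least $1-2e^{-ct}$. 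It is worth noting up front that this concentration around the exact mean $nS$ is produced by the averaging over the $n$ independent rows: a single $W_i$ need not concentrate around $S$, so there is no hope of proving the statement row-by-row.

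For the upper tail I would apply Lemma~\ref{lm::subgauss_norm_upper} row by row, which gives that $\bigl(W_i - c\sigma^2 S\bigr)_+$ is sub-exponential with $\psi_1$-norm at most $C\sigma^2\lambda_{k+1}$; since the rows are independent and $\lambda_{k+1}\le S$, a Bernstein bound on the sum of these $n$ independent sub-exponential random variables controls $\sum_i W_i$ from above, the linear part of the exponent being of order $t\sigma^2 S/(\sigma^2\lambda_{k+1})\ge t$, which already delivers the $e^{-ct}$ rate. For the lower tail I would use non-negativity of $W_i$: then $S-W_i\le S$ is centered and bounded above, so the one-sided (sub-gaussian) half of Bernstein's inequality, fed with a variance bound on $W_i$, again yields the deviation $t\sigma^2 S$ with probability at least $1-e^{-ct}$. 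Intersecting these events (together with the event on which the single-row estimate of Lemma~\ref{lm::subgauss_norm_upper} holds) gives the claim. The lemma is phrased for the tail block $\Sigma_\ktoinf$, but the identical argument with $\Sigma$ in place of $\Sigma_\ktoinf$ and $0$ in place of $k$ covers an arbitrary diagonal weight, in particular the all-ones weight needed for $\tr(X_\uptok\Sigma_\uptok^{-1}X_\uptok^\top)$.

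The step I expect to be the crux is obtaining the \emph{sharp} leading constant, i.e.\ deviation $t\sigma^2 S$ about the exact mean $nS$ rather than merely $\lesssim_{\sigma} nS$. This forces a bound on $\Var(W_i)$ of order $\sigma^4\|\Sigma_\ktoinf\|_F^2=\sigma^4\sum_{j>k}\lambda_j^2$ — which is what a Hanson–Wright-type estimate supplies (Theorem~6.2.1 in \cite{vershynin_hdp}, under independence of the coordinates of a row; in the fully general case one instead falls back to Lemma~\ref{lm::subgauss_norm_upper}, at the price of replacing the mean by a constant multiple of it) — so that the quadratic part of the Bernstein exponent for $\sum_i(W_i-S)$ is at least of order $t^2\bigl(\sum_{j>k}\lambda_j\bigr)^2 \big/ \bigl(n\sum_{j>k}\lambda_j^2\bigr)$; the tail effective rank $\bigl(\sum_{j>k}\lambda_j\bigr)^2/\sum_{j>k}\lambda_j^2$ appearing here is exactly what must absorb this term into $e^{-ct}$ over $t\in(0,n)$. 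Carrying this bookkeeping symmetrically through the lower tail (using that $\sigma$ is bounded below by an absolute constant, being the sub-gaussian norm of a unit-variance variable) then completes the argument.
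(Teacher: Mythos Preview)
Your plan is sound but considerably more elaborate than the paper's. The paper's argument is essentially one line: each $W_i=\|\Sigma_\ktoinf^{1/2}Z_{i,\ktoinf}\|^2$ is sub-exponential with $\psi_1$-norm at most $c_1\sigma^2\sum_{j>k}\lambda_j$ (squared norm of a sub-gaussian vector), and Bernstein's inequality applied directly to the centered i.i.d.\ sum $\sum_i(W_i-S)$ finishes --- no upper/lower tail split, no refinement of the row-wise $\psi_1$-norm down to $\sigma^2\lambda_{k+1}$ via Lemma~\ref{lm::subgauss_norm_upper}, and no Hanson--Wright. What your more careful route is chasing is exactly the subtlety you flag in your last paragraph: with only the coarse $\psi_1$-norm $\sigma^2 S$, Bernstein yields exponent $c\min(t,t^2/n)$, which on the stated range $t\in(0,n)$ is $ct^2/n$ rather than the advertised $ct$. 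The paper's proof does not engage with this (the substitution ``$t\to t/n$'' in its displayed inequality tacitly produces $\min(t,t^2/n)$, not $t$), so while your instinct that sharper variance control --- hence coordinate independence or a Hanson--Wright step --- would be needed for the literal $e^{-ct}$ rate is correct, the paper simply takes the coarser route and moves on.
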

\begin{proof}
Since $\{Z_{i, \ktoinf}\}_{i=1}^n$ are independent, isotropic and sub-Gaussian, $\|\Sigma_\ktoinf^{1/2} Z_{i, \ktoinf}\|^2$ are independent sub-exponential r.v.'s with expectation $\sum_{i > k} \lambda_i$ and sub-exponential norms bounded by $c_1\sigma^2\sum_{i > k}\lambda_i$. Applying Bernstein's inequality gives
\[
\Pbb\left(\left|\frac1n\sum_{i=1}^n\|\Sigma_\ktoinf^{1/2} Z_{i, \ktoinf}\|^2 - \sum_{i > k} \lambda_i\right| \geq t\sigma^2\sum_{i > k} \lambda_i \right) \leq 2\exp\left(-c_2\min(t, t^2)n\right).
\]
Changing $t$ to $\sqrt{t/n}$ gives the result.
\end{proof}

\begin{lemma}[Weakened Hanson-Wright inequality]
\label{lm::quadratic form concentration}
Suppose $M \in \R^{n\times n}$ is  a (random) PSD matrix and $\eps \in \R^n$ is a centered vector whose components $\{\eps_i\}_{i=1}^n$ are independent and have sub-Gaussian norm at most $\sigma$. Then for some absolute constants $c, C$ and any $t > 1$ with probability at least $1 - 2e^{-t/c}$,
\[
\eps^\top M \eps \leq C\sigma^2t\tr(M).
\]
\end{lemma}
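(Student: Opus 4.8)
The plan is to deduce this ``weakened'' bound from the classical Hanson--Wright inequality, applied conditionally on $A$. Throughout I read the hypothesis in the way it is used in the paper: $A$ is independent of $\eps$ (in every application $A$ is a function of the design matrix only, while $\eps$ is the exogenous noise). Conditioning on $A$ therefore turns $\tr(A)$, $\|A\|$ and $\|A\|_F$ into fixed scalars, and since the target bound $C\sigma^2 t\,\tr(A)$ is itself measurable with respect to $A$, it suffices to establish the estimate for every fixed PSD matrix; integrating the resulting conditional probability against the law of $A$ then removes the conditioning.

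So fix a PSD matrix $A$. The two structural facts I would use are the PSD comparisons $\|A\|\le\tr(A)$ and $\|A\|_F^2=\sum_i\mu_i(A)^2\le\mu_1(A)\tr(A)\le\tr(A)^2$. Next I would invoke Theorem~6.2.1 of \cite{vershynin_hdp}: since the coordinates of $\eps$ are independent, centered and sub-gaussian with $\|\eps_i\|_{\psi_2}\le\sigma$, for every $s\ge 0$
\[
\Pbb\left\{\left|\eps^\top A\eps-\E\,\eps^\top A\eps\right|\ge s\right\}\le 2\exp\left(-c_0\min\left\{\frac{s^2}{\sigma^4\|A\|_F^2},\ \frac{s}{\sigma^2\|A\|}\right\}\right).
\]
The mean is harmless: by independence and mean-zero of the $\eps_i$, $\E\,\eps^\top A\eps=\sum_iA_{ii}\E\eps_i^2\le C_1\sigma^2\tr(A)$, using $A_{ii}\ge0$ (as $A$ is PSD) and the sub-gaussian moment bound $\E\eps_i^2\lesssim\|\eps_i\|_{\psi_2}^2$.

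The key step is to take $s=\sigma^2 t\,\tr(A)$ and identify which branch of the minimum is active. By the two comparisons above, the first branch equals $\tfrac{t^2\tr(A)^2}{\|A\|_F^2}\ge t^2$ and the second equals $\tfrac{t\,\tr(A)}{\|A\|}\ge t$, so for $t>1$ the minimum is at least $t$. Hence, with $\eps$-probability at least $1-2e^{-c_0t}$,
\[
\eps^\top A\eps\le \E\,\eps^\top A\eps+\sigma^2 t\,\tr(A)\le (C_1+1)\sigma^2 t\,\tr(A),
\]
where the last inequality uses $t>1$. Relabelling constants gives $\eps^\top A\eps\le C\sigma^2 t\,\tr(A)$ with probability at least $1-2e^{-t/c}$ conditionally on $A$, and integrating over $A$ (using $A\perp\eps$) yields the claim.

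I do not expect a genuine obstacle here: the only point that needs attention is verifying which branch of the Hanson--Wright minimum governs the chosen $s$, and this is exactly what the two PSD comparisons $\|A\|\le\tr(A)$ and $\|A\|_F\le\tr(A)$ take care of; everything else is bookkeeping of absolute constants. (Note we only need the upper tail of Hanson--Wright, which is why no lower-bound variant appears.)
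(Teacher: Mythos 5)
Your proof is correct and follows essentially the same route as the paper: apply the Hanson--Wright inequality conditionally on $A$, bound $\E\,\eps^\top A\eps \lesssim \sigma^2\tr(A)$ using positive semidefiniteness and the sub-gaussian variance bound, and use $\|A\|_F\le\tr(A)$, $\|A\|\le\tr(A)$ to make the exponent $\min(t,t^2)=t$ for $t>1$. The only cosmetic difference is that you state the independence of $A$ and $\eps$ explicitly before deconditioning, while the paper handles this implicitly by noting the conditional tail bound does not depend on $A$.
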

\begin{proof}
By Theorem 6.2.1 (Hanson-Wright inequality) in \citep{vershynin_hdp},  for some absolute constant $c_1$  for any $t > 0$,
\[
\Pbb_M\left\{|\eps^\top M \eps  - \E\eps^\top M \eps | \geq t\right\} \leq 2\exp\left(-c_1\min\left\{\frac{t^2}{\|M\|_F^2\sigma^4 }, \frac{t}{\|M\|\sigma^2 }\right\}\right),
\]
where $\Pbb_M$ denotes conditional probability given $M$.

Since for any $i$, $\E\eps_i = 0$, and $\Var(\eps_i) \lesssim \sigma^2$, and since $M$ is PSD, we have 
\[
\E\eps^\top M \eps \leq c_2 \sigma^2 \tr(M).
\]

Moreover, since $\|M\|_F^2 \leq \tr(M)^2$ and $\|M\| \leq \tr(M),$ we obtain
\[
\Pbb_M\left\{\eps^\top M \eps > \sigma^2(c_2 + t)\tr(M)\right\} \leq 2\exp\{-c_1\min(t, t^2\}).
\]
Restricting to $t > 1$ and adjusting the constants gives the result (note that since the RHS doesn't depend on $M$, we can replace $\Pbb_M$ with $\Pbb$).
\end{proof}

\section{Controlling the singular values}

\label{sec::singular values}

\begin{lemma}[Bound on the norm of non-diagonal part of a Gram matrix]
\label{lm::non-diag part}
Denote $\mathring{A}_k$ to be the matrix $A_k$ with zeroed out diagonal elements: $\mathring{A}_k[i,j] = (1-\delta_{i,j})A_k[i,j]$.
Then for some absolute constant $c$ for any $t>0$ with probability at least $1-4e^{-t/c}$,
\[
\|\mathring{A}_k\| \leq c\sigma^2\sqrt{(t +  n)\left(\lambda_{k+1}^2(t + n) + \sum_{i > k} \lambda_i^2\right)}.
\]

\end{lemma}
\begin{proof}
We follow the lines of the decoupling argument from \cite{Vershynin_2012}. Consider a $1/4$-net $\{u_j\}_{j=1}^m$ on $\Sc^{n-1}$ s.t. $m \leq 9^n$. Then 
\[
\|\mathring{A}_k\| \leq 2 \max_j |u_j^\top   \mathring{A}_k u_j|.
\]
Indeed, take $v \in \Sc^{n-1}$ to be the eigenvector of $\mathring{A}_k$ whose eigenvalue has the largest absolute value $\mu$ (i.e., $\|\mathring{A}_k\| = \mu$), and let $u_j$ be the closest point in the net to $v$. Then
\begin{align*}
\| v - u_j\| \leq& 1/4,\\
 u_j^\top v \geq& 3/4,\\
|u_j^\top  \mathring{A}_k u_j| \geq& |u_j^\top \mathring{A}_k v| - |u_j^\top \mathring{A}_k (v - u_j)| \\
=& |\mu|u_j^\top v - |u_j^\top \mathring{A}_k (v - u_j)|\\
\geq& |\mu|u_j^\top v - \|u_j\|\|\mathring{A}_k \|\|v - u_j\|\\
\geq& |\mu|\left(\frac34 - \frac14\right).
\end{align*}
Denote the $k$-th coordinate of $u_j$ as $u_j[k]$. Note that 
\[
u_j^\top  \mathring{A}_k u_j = 4\E_T \sum_{k \in T \not\ni l} u_j[k]u_j[l] \mathring{A}_k[k,l],
\]
where the expectation is taken over a uniformly chosen random subset $T$ of $\{1, \dots, n\}$ (since $\mathring{A}_k$ has zeroed-out diagonal, we don't need to consider terms with $m=l$ which allows us to sum over $k \in T \not\ni l$). Thus, 
\begin{align*}
|u_j^\top  \mathring{A}_k u_j|
& \leq 4\max_T \left|\sum_{l \in T \not\ni m} u_j[l]u_j[m] \mathring{A}_k[l,m]\right| \\
& = 4\max_T \left|\left\langle \sum_{l \in T}u_j[l] X_\ktoinf[l,*], \sum_{m \not\in T} u_j[m] X_\ktoinf[m, *]\right\rangle\right|.
\end{align*}

Fix $j$ and denote
\begin{align*}
\xi^\top :=& \sum_{l \in T}u_j[l] X_\ktoinf[l, *]\Sigma_\ktoinf^{-1/2},\\
\eta^\top:=& \sum_{m \not\in T} u_j[m] X_\ktoinf[m, *]\Sigma_\ktoinf^{-1/2}.
\end{align*}

Note that since $u_j$ is from the sphere, $\{X_\ktoinf[i, *]\}_{i=1}^n$ are independent, and $l, m$ live in disjoint subsets, the vectors $\xi$ and $\eta$ are independent sub-Gaussian with sub-Gaussian norms bounded by $C\sigma$ for some absolute constant $C$.

First, that means that for some absolute constant $c_1$ we have
\[
\Pbb\left\{\left|\left\langle \Sigma^{1/2}\xi,\Sigma^{1/2}\eta\right\rangle\right| \geq t\sigma\|\Sigma \eta\|\right\} \leq 2e^{-c_1t^2}.
\]

Second, by Lemma \ref{lm::sub-Gauss_norm_upper}, for some constant $c_2$ for any $t > 0$
\[
\Pbb\left\{\|\Sigma \eta\|^2 \geq c_2\sigma^2\left(\lambda_{k+1}^2t + \sum_{i> k} \lambda_i^2\right)\right\} \leq 2e^{-t/c_2}.
\]

We obtain that for some absolute constant $c$ for any $t > 0$ with probability at least $1 - 4e^{-t/c}$
\[
\left|\left\langle \Sigma^{1/2}\xi,\Sigma^{1/2}\eta\right\rangle\right| < c\sigma^2\sqrt{t\left(\lambda_{k+1}^2t + \sum_{i> k} \lambda_i^2\right)}.
\]

Finally, making multiplicity correction for all $j$ (there are at most $9^n$ of them), and all subsets $T$ (at most $2^n$), we obtain that for some absolute constant $c$ with probability at least $1 - 4e^{-t/c}$
\[
\|\mathring{A}_k\| \leq c\sigma^2\sqrt{(t +  n)\left(\lambda_{k+1}^2(t + n) + \sum_{i > k} \lambda_i^2\right)}.
\]
\end{proof}

\begin{lemma}
\label{lm::top eigenvalue}
For some absolute constant $c$, for any $t>0$, with probability at least $1 - 6e^{-t/c}$,
\[
\|X_\ktoinf X_\ktoinf^\top\| \leq  c\sigma_x^2\left(\lambda_{k+1}(t + n) + \sum_{i> k} \lambda_i\right).
\]

%If it is additionally known that for some $\delta, L > 0$, w.p.\ at least $1-\delta$,
%\begin{equation}
%\label{eq::small ball}
%\|X_{1, \ktoinf}\|^2 \geq \frac{1}{L}\E \|X_{1,\ktoinf}\|^2 = \frac{1}{L}\sum_{i > k}\lambda_i,
%\end{equation}
%then w.p. at least $1 - n\delta - 4e^{-t/c}$,
%\begin{equation*}
%\mu_{\min}(X_\ktoinf X_\ktoinf^\top) \geq  \frac{1}{L}\sum_{i > k} \lambda_i - c\sigma^2\sqrt{(t +  n)\left(\lambda_{k+1}^2(t + n) + \sum_i \lambda_i^2\right)}.
%\end{equation*}
\end{lemma}
\begin{proof}
Note that $\|A\| \leq \max_i \|X_{i, *}\| + \|\mathring{A}\|.$ Combining Lemma \ref{lm::sub-Gauss_norm_upper} (with multiplicity correction) and Lemma \ref{lm::non-diag part} gives with probability $1-6e^{-t/c_1}$
\[
\|A\| \leq  c_1\sigma^2\left((t + c_1\log n)\lambda_1 + \sum_i \lambda_i+ \sqrt{(t +  n)\left(\lambda_1^2(t + n) + \sum_i \lambda_i^2\right)}\right).
\]

Now note that 
\begin{align*}
(t + c_1\log n)\lambda_1
\leq& c_1\sqrt{(t +  n)\left(\lambda_1^2(t + n) + \sum_i \lambda_i^2\right)} \\
\leq& c_1\sqrt{\lambda_1^2(t + n)^2 + \lambda_1(t + n)\sum_i \lambda_i}\\
\leq& c_1\left(\lambda_1(t + n) + \sum_i \lambda_i\right),
\end{align*}
where we used $\sqrt{a^2 + ab}\leq a + b$ in the last transition. Removing the dominated (up to a constant multiplier) terms gives the result.

%The second statement of the lemma follows from combining Lemma~\ref{lm::non-diag part} and Equation~\eqref{eq::small ball} with multiplicity correction.
\end{proof}

\controllingAk*
%\begin{lemma}[Restatement of Lemma \ref{lm::controlling A_k iff small ball and rank}]
%\label{lm::controlling A_k iff small ball and rank appendix}
%For any $\gamma \in [0, 1)$ and $\sigma_x > 0$ there exists $c> 0$ that only depends on $\sigma_x$ and %$\gamma$ such that when $\lambda > -\gamma\sum_{i > k} \lambda_i$  the following holds: for any $L \geq 1$
%\begin{itemize}
%\item If $\rho_k \geq L^2$ and with probability at least $(1 - \delta)^{1/n}$
%\[
%\lambda + \|X_\ktoinf[1, *]\|^2 \geq \frac{c}{L}\left(\lambda + \E\|X_\ktoinf[1, *]\|^2\right), 
%\]
%then with probability at least $1 - \delta - ce^{-n/c}$ 
%\[
%\mu_n(A_k) \geq L^{-1}\mu_1(A_k). 
%\]
%\item Suppose that it is known that with probability at least $ce^{-n/c}$ $\mu_n(A_k) \geq L^{-1}\mu_1(A_k)$. %Then $\rho_k \geq \frac{1}{cL}$ and with probability at least $\left(1 - ce^{-n/c}\right)^{1/n}$
%\[
%\lambda + \|X_\ktoinf[1, *]\|^2 \geq \frac{1}{cL}\left(\lambda + \E\|X_\ktoinf[1, *]\|^2\right), 
%\]
%\end{itemize}
%\end{lemma}
\begin{proof}
We start with the high-probability bounds that we can derive assuming only sub-Gaussianity and independence of data vectors. By Lemma~\ref{lm::sub-Gauss_norm_upper}, for some absolute constant $c$ and for any $t > 0$,
\[
\Pbb\left\{\|X_\ktoinf[i, *]\|^2 > c\sigma_x^2\left(t\lambda_{k+1} + \sum_{i > k} \lambda_i\right)\right\} \leq 2e^{-t/c}.
\]
By Lemma \ref{lm::non-diag part}, for some absolute constant $c$ and for any $t>0$, with probability at least $1-4e^{-t/c}$,
\[
\|\mathring{A}_k\| \leq c\sigma_x^2\sqrt{(t +  n)\left(\lambda_{k+1}^2(t + n) + \sum_{i > k} \lambda_i^2\right)}.
\]

Since $\|A_k\| \leq \lambda + \|\mathring{A}_k\| + \max_i \|X_\ktoinf[i, *]\|$, the above two statements imply that for any $t > 0$ with probability at least $1 - 4e^{-n/c} - 2ne^{-t/c}$,
\begin{align*}
\mu_1(A_k) \leq& \lambda + c\sigma_x^2\sqrt{n\left(\lambda_{k+1}^2n + \sum_{i > k} \lambda_i^2\right)}  + c\sigma_x^2\left(t\lambda_{k+1} + \sum_{i > k} \lambda_i\right)\\
\leq& \lambda + 2c\sigma_x^2\left((t+n)\lambda_{k+1} + \sum_{i > k}\lambda_i + \sqrt{n\sum_{i > k}\lambda_i^2}\right)\\
\leq& \lambda + 3c\sigma_x^2\left((t+n)\lambda_{k+1} + \sum_{i > k}\lambda_i\right),
\end{align*}
where we used the following chain of inequalities to make the last transition:
\[
2\sqrt{n\sum_{i > k}\lambda_i^2} \leq 2\sqrt{n\lambda_{k+1}\sum_{i > k}\lambda_i}\leq n\lambda_{k+1} + \sum_{i > k}\lambda_i.
\]
On the same event,
\[
\mu_n(A_k) \geq \lambda + \min_i\|X_\ktoinf[i, *]\|^2 - c\sigma_x^2\left(n\lambda_{k+1} + \sqrt{n\sum_{i > k}\lambda_i^2}\right).
\]

On the other hand, note that the sum of eigenvalues of $A_k$ is equal to 
\[
\tr(A_k)=  \lambda n + \sum_{i=1}^n\|\Sigma_\ktoinf^{1/2} Z_\ktoinf[i, *]^\top\|^2.
\]

By Lemma \ref{lm::sum of norms}, for some absolute constant $c$ and any $t \in (0,n)$, with probability at least $1 - 2e^{-ct}$,
\[
(n-\sqrt{nt}\sigma_x^2)\sum_{i>k} \lambda_i \leq \sum_{i=1}^n\|\Sigma_\ktoinf^{1/2} Z_\ktoinf[i, *]^\top\|^2 \leq (n+\sqrt{nt}\sigma_x^2)\sum_{i>k} \lambda_i.
\]

On this event
\begin{align*}
\mu_1(A_k) \geq& \lambda +\left(1-\sqrt{\frac{t}{n}}\sigma_x^2\right)\sum_{i>k} \lambda_i, \\
\mu_n(A_k) \leq& \lambda + \left(1+\sqrt{\frac{t}{n}}\sigma_x^2\right)\sum_{i>k} \lambda_i.
\end{align*}

Finally, note  that $\mu_1(A_k) \geq  \lambda_{k+1} \|Z_\ktoinf[*,1]\|^2 + \lambda.$ By Lemma \ref{lm::sum of norms}, for some $c_3$ and for any $t \in (0,n)$, with probability. at least $1 - 2e^{-c_3t}$,
\[
\|Z_\ktoinf[*,1]\|^2 \geq n - \sqrt{nt}\sigma_x^2,
\]which means that 
\[
\mu_1(A_k) \geq \lambda + n\lambda_{k+1}\left(1 - \sqrt{\frac{t}{n}}\sigma_x^2\right).
\]

Combining all those bounds together gives that there is a constant $c_x$ that only depends on $\sigma_x$ such that with probability at least $1 - c_x e^{-n/c_x}$ all the following inequalities hold simultaneously:
\begin{align*}
\mu_1(A_k) \leq& \lambda + c_x\left(n\lambda_{k+1} + \sum_{i > k}\lambda_i\right),\\
\mu_1(A_k) \geq& \lambda + \frac{1}{c_x} \sum_{i > k}\lambda_i,\\
\mu_1(A_k) \geq& \lambda + \frac{1}{c_x} n\lambda_{k+1},\\
\mu_n(A_k) \geq& \lambda + \min_i \|X_\ktoinf[i, *]\|^2 - c_x\left(n\lambda_{k+1} + \sqrt{n \sum_{i > k}\lambda_i^2}\right),\\
\mu_n(A_k) \leq& \lambda + c_x \sum_{i > k}\lambda_i,\\
\mu_n(A_k) \leq& \lambda + \min_i \|X_\ktoinf[i, *]\|^2.
\end{align*}

In view of the bounds that we derived above, the following inequality is a sufficient condition for the statement that with probability at least $1 - c_x e^{-n/c_x}$ the condition number of $A_k$ does not exceed $L$:
\[
\frac{1}{L}\left(\lambda + c_x\left(n\lambda_{k+1} + \sum_{i > k}\lambda_i\right)\right) \leq\lambda + \min_i \|X_\ktoinf[i, *]\|^2 - c_x\left(n\lambda_{k+1} + \sqrt{n \sum_{i > k}\lambda_i^2}\right).
\]

Note that for any $\zeta > 0$ 
\[
\sqrt{n\sum_{i > k}\lambda_i^2} < 2\sqrt{n\sum_{i > k}\lambda_i^2} \leq 2\sqrt{n\lambda_{k+1}\sum_{i > k}\lambda_i}\leq \zeta n\lambda_{k+1} + \zeta^{-1}\sum_{i > k}\lambda_i,
\]
which implies that for any $\zeta$ the following is also a sufficient condition:
\[
\lambda + \min_i \|X_\ktoinf[i, *]\|^2 \geq \lambda L^{-1} + c_x(1 + L^{-1} + \zeta) n\lambda_{k+1} + c_x(L^{-1} + \zeta^{-1})\sum_{i > k}\lambda_i.
\]

Recall that $\lambda > -\gamma \sum_{i > k}\lambda_i$, so 
\[
\sum_{i > k}\lambda_i \leq \frac{1}{1-\gamma}\left(\lambda + \sum_{i > k}\lambda_i\right),
\]
which allows us to  upper bound the right-hand side of that condition. We write
\begin{align*}
&\lambda L^{-1} + c_x(1 + L^{-1} + \zeta) n\lambda_{k+1} + c_x(L^{-1} + \zeta^{-1})\sum_{i > k}\lambda_i\\
\leq& L^{-1}\left(\lambda + \sum_{i > k}\lambda_i\right) + c_x(1 + L^{-1} + \zeta)\rho_k^{-1}\left(\lambda + \sum_{i > k}\lambda_i\right) +  \frac{c_x(L^{-1} + \zeta^{-1})}{1 - \gamma}\left(\lambda + \sum_{i > k}\lambda_i\right)\\
=& \left(\lambda + \sum_{i > k}\lambda_i\right)\left(L^{-1}\left(1 + c_x\rho_k^{-1} + \frac{c_x}{1-\gamma}\right) + \rho_k^{-1}\left(c_x + c_x\zeta\right) + \frac{c_x\zeta^{-1}}{1-\gamma}\right).
\end{align*}

Now  take $\zeta = \rho_k^{1/2}$ and a constant $c$ that is big enough depending on $\gamma$ and $c_x$. Then if $\rho_k > L^2 > 1$ and with probability at least $1 - \delta$,
\[
\lambda + \min_i \|X_\ktoinf[i, *]\|^2 \geq \frac{c}{L}\left(\lambda + \sum_{i > k}\lambda_i\right),
\]
then with probability at least $1 - \delta  - c_xe^{-n/c_x}$,
\[
\mu_n(A_k)\geq L^{-1}\mu_1(A_k). 
\]

Note that since the rows of $X_\ktoinf$ are i.i.d., the first condition is equivalent to that with probability at least $(1 - \delta)^{1/n}$
\[
\lambda +  \|X_\ktoinf[1, *]\|^2 \geq \frac{c}{L}\left(\lambda + \sum_{i > k}\lambda_i\right).
\]

Now let's derive a necessary condition. Suppose it is known that with probability at least $c_x e^{-n/c_x}$ $\mu_n(A_k) \geq L^{-1}\mu_1(A_k)$. Then
\begin{align*}
\lambda + \min_i \|X_\ktoinf[i, *]\|^2 \geq& \frac{1}{L}\left(\lambda + \frac{1}{c_x} \sum_{i > k}\lambda_i\right),\\
\lambda + c_x\sum_{i > k} \lambda_i \geq& \frac{1}{L}\left(\lambda + \frac{1}{c_x}n\lambda_{k+1}\right).
\end{align*}

For the first equation, we can write
\begin{align*}
\lambda + \min_i \|X_\ktoinf[i, *]\|^2& \geq \frac{1}{L}\left(\lambda + \frac{1}{c_x} \sum_{i > k}\lambda_i\right)\\
\lambda(1 - L^{-1} + L^{-1}c_x^{-1}) + \min_i \|X_\ktoinf[i, *]\|^2& \geq \frac{1}{Lc_x}\left(\lambda + \sum_{i > k}\lambda_i\right),\\
\lambda+ \min_i \|X_\ktoinf[i, *]\|^2& \geq \frac{1}{Lc_x(1 - L^{-1} + L^{-1}c_x^{-1}) }\left(\lambda + \sum_{i > k}\lambda_i\right)\\
&\geq \frac{1}{Lc_x}\left(\lambda + \sum_{i > k}\lambda_i\right),
\end{align*}
where we used the fact that $c_x > 1$ and $L > 1$. 

When it comes to the second equation, we write
\begin{align*}
\lambda + c_x\sum_{i > k} \lambda_i \geq& \frac{1}{L}\left(\lambda + \frac{1}{c_x}n\lambda_{k+1}\right),\\
(L - 1)\lambda + c_x L \sum_{i > k}\lambda_i \geq& \frac{1}{c_x}n\lambda_{k+1} = \frac{1}{c_x}\rho_k^{-1}\left(\lambda + \sum_{i > k}\lambda_i\right),\\
(L-1)\left(\lambda + \sum_{i > k}\lambda_i\right) + (c_x L - L + 1) \sum_{i > k}\lambda_i \geq& \frac{1}{c_x}\rho_k^{-1}\left(\lambda + \sum_{i > k}\lambda_i\right)\\
\left(L-1 + \frac{c_x L - L + 1}{1 - \gamma}\right)\left(\lambda + \sum_{i > k}\lambda_i\right) \geq& \frac{1}{c_x}\rho_k^{-1}\left(\lambda + \sum_{i > k}\lambda_i\right)\\
\rho_k \geq& c_x^{-1}\left(L-1 + \frac{c_x L - L + 1}{1 - \gamma}\right)^{-1} \geq c^{-1}L^{-1},
\end{align*}
where $c$ is a large enough constant that only depends on $\gamma$ and $c_x$. 
\end{proof}

\begin{lemma}
\label{lm::cond number for k}
Suppose assumptions \ref{as::gamma}$(k, \gamma)$ and \ref{as::condition number of A_k}$(k, \delta, L)$ are satisfied and $\gamma < 1$. Then for some absolute constant $c$ for any $t \in (0, n)$ with probability  at least $1 - \delta - 2e^{-ct}$
\[
\frac{1}{L}\left(1 - \frac{\sqrt{t}\sigma_x^2}{\sqrt{n}(1-\gamma)}\right)\left(\lambda + \sum_{i>k} \lambda_i\right) \leq \mu_n(A_k) \leq \mu_1(A_k) \leq  L\left(1 - \frac{\sqrt{t}\sigma_x^2}{\sqrt{n}(1-\gamma)}\right)\left(\lambda + \sum_{i>k } \lambda_i\right).
\]

Moreover, if   $\delta < 1-4e^{-ct}$ for some $t \in (0,n)$, then 
\[
\frac{\lambda + \sum_{i > k} \lambda_i}{n\lambda_{k+1}}  \geq \frac{1 - \sigma_x^2\sqrt{t/n}}{L+\frac{\gamma}{1-\gamma} + \frac{\sqrt{t}\sigma_x^2L }{\sqrt{n}(1-\gamma)}}.
\]

\end{lemma}
\begin{proof}
First of all, note that the sum of eigenvalues of $A_k$ is equal to 
\[
\tr(A_k)=  \lambda n + \sum_{i=1}^n\|\Sigma_\ktoinf^{1/2} Z_\ktoinf[i, *]^\top\|^2.
\]

By Lemma \ref{lm::sum of norms} for some absolute constant $c$ and any $t \in (0,n)$ with probability at least $1 - 2e^{-ct}$
\[
(n-\sqrt{nt}\sigma_x^2)\sum_{i>k} \lambda_i \leq \sum_{i=1}^n\|\Sigma_\ktoinf^{1/2} Z_\ktoinf[i, *]^\top\|^2 \leq (n+\sqrt{nt}\sigma_x^2)\sum_{i>k} \lambda_i.
\]

Now we know that with probability at least $1 - \delta - 2\exp(-c_2t)$ the following two conditions hold:
\begin{gather*}
\mu_1(A_k) \leq L \mu_n(A_k),\\
n\lambda  + (n-\sqrt{nt}\sigma_x^2)\sum_{i > k} \lambda_i \leq \sum_{i=1}^n \mu_i(A_k)  \leq  n\lambda  +(n+\sqrt{nt}\sigma_x^2)\sum_{i> k} \lambda_i.
\end{gather*}

The first line of the display above implies that
\[
n\mu_1(A_k)/L\leq \sum_{i=1}^n \mu_i(A_k)  \leq n\mu_n(A_k)\cdot L
\]

Thus, with probability  at least $1 - \delta - 2\exp(-c_2t)$,
\begin{gather*}
\frac{\lambda}{L} + \frac{n-\sqrt{nt}\sigma_x^2}{nL}\sum_{i > k} \lambda_i \leq \mu_n(A_k) \leq \mu_1(A_k) \leq \lambda L  + \frac{(n+\sqrt{nt}\sigma_x^2)L}{n}\sum_{i> k} \lambda_i,\\
\frac{1}{L}\left(\lambda  + \sum_{i} \lambda_i\right) - \frac{\sqrt{t}\sigma_x^2}{\sqrt{n}L}\sum_{i> k} \lambda_i \leq \mu_n(A_k) \leq \mu_1(A_k) \leq L\left(\lambda  + \sum_{i} \lambda_i\right) + \frac{\sqrt{t}\sigma_x^2L}{\sqrt{n}}\sum_{i> k} \lambda_i.
\end{gather*}
Using the fact that $\sum_{i > k} \lambda_i \leq \left(\lambda  + \sum_{i > k} \lambda_i\right)/(1-\gamma)$, we obtain
\[
\frac{1}{L}\left(\lambda  + \sum_{i> k} \lambda_i\right)\left(1 - \frac{\sqrt{t}\sigma_x^2}{\sqrt{n}(1-\gamma)}\right)\leq \mu_n(A_k) \leq \mu_1(A_k) \leq L \left(\lambda  + \sum_{i> k} \lambda_i\right)\left(1 + \frac{\sqrt{t}\sigma_x^2}{\sqrt{n}(1-\gamma)}\right),
\]
which gives the first assertion of the lemma. 

Next, note that $\mu_1(A_k) \geq  \lambda_{k+1} \|Z_\ktoinf[*,1]\|^2 + \lambda.$ By Lemma \ref{lm::sum of norms} for some $c_3$ for any $t \in (0,n)$ w.p. at least $1 - 2e^{-c_3t}$,  $\|Z_\ktoinf[*,1]\|^2 \geq n - \sqrt{nt}\sigma_x^2$, which means that if $1 - \delta - 2e^{-c_2t}- 2e^{-c_3t} > 0$ then with positive probability 
\begin{align*}
\lambda L  + \frac{(n+\sqrt{nt}\sigma_x^2)L}{n}\sum_{i>k} \lambda_i \geq&  \lambda_{k+1}(n - \sqrt{nt}\sigma_x^2) + \lambda,\\
\lambda (L-1)  + \frac{(n+\sqrt{nt}\sigma_x^2)L}{n}\sum_{i > k} \lambda_i \geq&  \lambda_{k+1}(n - \sqrt{nt}\sigma_x^2), \\
\left(\lambda  + \sum_{i > k} \lambda_i\right)(L-1)   + \left(1 + \frac{\sqrt{t}\sigma_x^2L }{\sqrt{n}}\right)\sum_{i > k} \lambda_i \geq&  \lambda_{k+1}(n - \sqrt{nt}\sigma_x^2), \\
\left(\lambda  + \sum_{i > k} \lambda_i\right)\left(L+\frac{\gamma}{1-\gamma} + \frac{\sqrt{t}\sigma_x^2L }{\sqrt{n}(1-\gamma)}\right)\geq&  \lambda_{k+1}(n - \sqrt{nt}\sigma_x^2). \\
\end{align*}

Taking $c_4 = \min(c_2, c_3)$ we see that if $\delta < 1-4e^{-c_4t}$, then 
\[
\frac{\lambda + \sum_{i > k} \lambda_i}{n\lambda_{k+1}}  \geq \frac{1 - \sigma_x^2\sqrt{t/n}}{L+\frac{\gamma}{1-\gamma} + \frac{\sqrt{t}\sigma_x^2L }{\sqrt{n}(1-\gamma)}}.
\]

\end{proof}

\kandkstar*
%\begin{lemma}[Restatement of Lemma \ref{lm::k star must give eigvals}]
%\label{lm::k star must give eigvals-appendix}
%
%Fix any constants $\gamma \in [0, 1)$, $b > 0$, $L > 0$. Denote 
%\[
%k^* = \min\{k: \rho_k > b\}
%\]
%There exist constants $c, L'$ that only depend on $\sigma_x$, $\gamma$, $b$, $L$ s.t. the following holds: %suppose assumptions \ref{as::gamma}$(k, \gamma)$ and \ref{as::condition number of A_k}$(k, \delta, L)$ hold %for some $k \in [k^*, n]$. Then assumptions \ref{as::gamma}($k^*, \gamma$) and  \ref{as::condition number of %A_k}$(k^*, \delta + ce^{-n/c}, L')$ hold too.
%\end{lemma}
\begin{proof}
First, by Lemma \ref{lm::cond number for k}
 for any $t \in (0, n)$ with probability  at least $1 - \delta - 2e^{-c_1t}$,
\[
\frac{1}{L}\left(1 - \frac{\sqrt{t}\sigma_x^2}{\sqrt{n}(1-\gamma)}\right)\left(\lambda + \sum_{i > k} \lambda_i\right) \leq \mu_n(A_k) \leq \mu_n(A_{k^*}).
\]

Next, by Lemma \ref{lm::top eigenvalue} we know that with probability at least $1 - 6e^{-t/c_3}$,
\[
\mu_1(A_{k^*})\leq  c_3\sigma_x^2\left(\lambda_{k^* + 1}(t + n) + \sum_{i> k^*} \lambda_i\right) + \lambda.
\]

By definition of $k^*$ and $\rho_k$
\[
\lambda_{k^* + 1}n = \rho_{k^*}^{-1}\left(\lambda + \sum_{i > k^*} \lambda_i\right) \leq b^{-1}\left(\lambda + \sum_{i > k^*} \lambda_i\right).
\]
 Therefore, 
 \[
 \lambda + \sum_{i > k} \lambda_i = \lambda + \sum_{i > k^*} \lambda_i - \sum_{i = k^* + 1}^k \lambda_i \geq \lambda + \sum_{i > k^*} \lambda_i - n\lambda_{k^* + 1} \geq (1 - b^{-1})\left(\lambda + \sum_{i > k^*} \lambda_i\right).
 \]
 
 Moreover, since $\lambda > -\gamma \sum_{i > k^*} \lambda_i$,
 \begin{align*}
 \lambda \leq& \lambda + \sum_{i > k^*}\lambda_i,\\
 \sum_{i > k^*} \lambda_i \leq& \frac{1}{1 - \gamma}\left(\lambda + \sum_{i > k^*}\lambda_i\right)\\
 \lambda_{k^* + 1}(t + n) \leq& b^{-1}(1 + t/n)\left(\lambda + \sum_{i > k^*}\lambda_i\right).
 \end{align*}
Thus, with probability at least $1 - \delta - 8e^{-t/c_4}$
\begin{align*}
\mu_n(A_{k^*}) \geq& \frac{1}{L}\left(1 - \frac{\sqrt{t}\sigma_x^2}{\sqrt{n}(1-\gamma)}\right)(1 - b^{-1})\left(\lambda + \sum_{i > k^*}\lambda_i\right),\\
\mu_1(A_{k^*}) \leq& \left(c_3 \sigma_x^2\left(\frac{1}{1-\gamma} + \frac{1}{b}\left(1 + \frac{t}{n}\right)\right) + 1\right)\left(\lambda + \sum_{i > k^*}\lambda_i\right).
\end{align*}

Taking $c_5$ large enough (depending on $L$, $b$, $\sigma_x$ and $\gamma$) and plugging in $t = n/c_5$ gives the result for $c = \max(8, c_4c_5)$ and 
\[
L' = \left(c_3 \sigma_x^2\left(\frac{1}{1-\gamma} + \frac{1}{b}\left(1 + c_5^{-1}\right)\right) + 1\right)\div\left(\frac{1}{L}\left(1 - \frac{\sigma_x^2}{\sqrt{c_5}(1-\gamma)}\right)(1 - b^{-1})\right).
\]

The derivation of \ref{as::gamma}($k^*, \gamma$) is obvious: indeed, assumption \ref{as::gamma}($k, \gamma$) states that
\[
 \lambda > -\gamma \sum_{i > k} \lambda_i.
\]
Since $k^* \geq k$, $\sum_{i > k} \lambda_i \leq \sum_{i > k^*} \lambda_i$, so
\[
 \lambda > -\gamma \sum_{i > k^*} \lambda_i,
\]
which is exactly assumption  \ref{as::gamma}($k^*, \gamma$).
\end{proof}

\section{Lower bounds}
\label{sec::lower}

A very convenient tool that we use to prove the lower bounds is the following 
\begin{lemma}[Lemma 9 from \citep{benign_overfitting}]
\label{lm::sum of non-neg lower bound whp}
Suppose that  $\{\eta_i\}_{i=1}^p$ is a sequence of non-negative random variables, and that $\{t_i\}_{i=1}^p$ is a sequence of non-negative real numbers (at least one of which is strictly positive) such that, for some $\delta \in (0,1)$ for any $i \leq p$ with probability at least $1-\delta$, $\eta_i > t_i$. Then with probability at least $1-2\delta$,
\[
\sum_{i=1}^n \eta_i \geq \frac12 \sum_{i=1}^p t_i.
\]
\end{lemma}

It turns out to be quite straightforward to express bias and variance terms as sums of non-negative series. This lemma allows us to give a separate high probability lower bound for each term in the series to obtain the high probability lower bound for the whole sum.

\subsection{Variance term}
\label{sec::variance lower appendix}
The argument for lower bounding the variance term is the same as in \citep{benign_overfitting}. We repeat it here because the result in \citep{benign_overfitting} is stated in a different form and in the ridgeless setting only.

\varlower*
%\begin{lemma}[Restatement of Lemma \ref{lm::var lower main body}]
%\label{lm::var lower}
%Fix any constant $\gamma \in [0, 1)$. There exists a constant $c$ that only depends on $\sigma_x$ and $\gamma$ %s.t. for any $k < n/c$ under  assumptions \ref{as::gamma}$(k, \gamma)$ and \ref{as::independent %components}$(k, \delta)$  w.p. at least $1 - ce^{-n/c} - 2\delta$
%\[
%V \geq \frac{1}{cn}\sum_{i = 1}\min\left\{1, \frac{\lambda_i^2}{ \lambda_{k+1}^2(\rho_k + 1)^2}\right\}.
%\]
%\end{lemma}

\begin{proof}
The variance term can be written as
\[
V = \tr\left(\Sigma X^\top A^{-2} X\right) = \sum_{i=1}^\infty \frac{\lambda_i^2 z_i^\top A_{-i}^{-2}z_i}{(1 + \lambda_i z_i^\top A_{-i}^{-1}z_i)^2},
\]
where $z_i$ are columns of matrix $Z$ (recall that $Z = X\Sigma^{-1/2}$). Note that every term in this sum is non-negative, even if $A_{-i}$ is not PSD. Denote  $A_{-i+}$ to be the PSD square root of $A_{-i}^2$, i.e., the matrix with the same eigendecomposition as $A_{-i}$, but with eigenvalues substituted by their absolute values. It immediately follows that
\[
V \geq \sum_{i=1}^\infty \frac{\lambda_i^2 z_i^\top A_{-i}^{-2}z_i}{(1 + \lambda_i z_i^\top A_{-i+}^{-1}z_i)^2},
\]

By Cauchy-Schwartz we have
\[
\|z_i\|^2\cdot z_i^\top A_{-i}^{-2}z_i \geq (z_i^\top A_{-i+}^{-1}z_i)^2. 
\]

Thus, 
\[
V \geq  \sum_{i=1}^\infty\frac{1}{\|z_i\|^2\left(1 + (\lambda_i z_i^\top A_{-i+}^{-1}z_i)^{-1}\right)^2}.
\]

Now our goal is to lower-bound the largest eigenvalues of $A_{-i+}^{-1}$. Let's write
\[
A_{-i} = \lambda I_n + \sum_{j \neq i} \lambda_j z_j z_j^\top.
\]
The idea is, as always, to separate the first $k$ coordinates. Our initial goal is to bound the norm of $\sum_{j \neq i, j > k} \lambda_j z_j z_j^\top$. Using Lemma \ref{lm::top eigenvalue}, for some absolute constant $c_1$ and for any $t > 0$, with probability at least $1 - 6e^{-t/c_1}$,
\[
\left\|\sum_{j \neq i, j > k} \lambda_j z_j z_j^\top\right\|\leq \left\|\sum_{ j > k} \lambda_j z_j z_j^\top\right\| = \|X_\ktoinf X_\ktoinf^\top\|\leq c_1\sigma_x^2\left(\lambda_{k+1}(t+n) + \sum_{i > k} \lambda_i\right)  
\]

The matrix $\sum_{j \neq i} \lambda_j z_j z_j^\top$ is a correction to $\sum_{j \neq i, j > k} \lambda_j z_j z_j^\top$ of rank at most $k$. Therefore, with probability at least $1 - 6e^{-t/c_1}$ the bottom $k$ eigenvalues of $\sum_{j \neq i} \lambda_j z_j z_j^\top$ lie in the segment from $0$ to $c_1\sigma_x^2\left(\lambda_{k+1}(t+n) + \sum_{i > k} \lambda_i\right)$. The matrix $A_{-i}$ has the same eigenvalues, but with $\lambda$ added to each one, so on the same event all the eigenvalues of $A_{-i}$ are from $\lambda$ to $\lambda + c_1\sigma_x^2\left(\lambda_{k+1}(t+n) + \sum_{i > k} \lambda_i\right)$. We can write
\begin{align*}
&c_1\sigma_x^2\left(\lambda_{k+1}(t+n) + \sum_{i > k} \lambda_i\right) + \lambda\\
\leq&  c_1\sigma_x^2\left(\lambda_{k+1}(t+n) + \frac{1}{1-\gamma}\left(\lambda + \sum_{i > k} \lambda_i\right)\right)  + \frac{\gamma}{1-\gamma}\left(\lambda + \sum_{i > k} \lambda_i\right),
\end{align*}
where we used that $\lambda > -\gamma \sum_{i > k} \lambda_i$ in the second line (for $\lambda < 0$ it implies $|\lambda| < \gamma \sum_{i > k} \lambda_i$). Moreover, for the left end of the segment we also have that either $\lambda > 0$ or 
\[
|\lambda| \leq \gamma \sum_{i > k}\lambda_i \leq \frac{\gamma}{1-\gamma}\left(\lambda + \sum_{i > k} \lambda_i\right).
\]

Thus, for some constant $c_2$ which only depends on $\sigma$ and $\gamma$, for any $i$ with probability at least $1 - 6e^{-n/c_2}$, for any $j > k$
\[
|\mu_j(A_i)| \leq c_2\left(\lambda_{k+1}n + \lambda + \sum_{i > k} \lambda_i\right).
\]

In words, with high probability the matrix $A_{-i}$ has at least $n-k$ eigenvalues whose magnitude is bounded by $c_2\left(\lambda_{k+1}n + \lambda + \sum_{i > k} \lambda_i\right)$. Recall that $A_{-i+}$ is PSD with the same magnitudes of the eienvalues. Denote $P_{i,k}$ to be the projector on the linear space spanned by the first $k$ eigenvectors of $A_{-i+}$. We can now write that with probability at least $1 - 6e^{-n/c_2}$
\[
z_i^\top A_{-i+}^{-1}z_i \geq  \|(I - P_{i,k})z_i\|^{2} c_2^{-1}\left(\lambda_{k+1}n + \lambda + \sum_{i > k} \lambda_i\right)^{-1}
\]

Since $z_i$ is independent of $P_{i,k}$, by Theorem 6.2.1 (Hanson-Wright inequality) in \citep{vershynin_hdp},  for some absolute constant $c_2$ and for any $t > 0$,
\[
\Pbb\left\{\left|\|P_{i,k}z_i\|^{2}  - \E_{z_i}\|P_{i,k}z_i\|^{2}\right| \geq t\right\} \leq 2\exp\left(-c_2^{-1}\min\left\{\frac{t^2}{\sigma_x^4\|P_{i,k}^2\|_F^2 }, \frac{t}{\sigma_x^2 \|P_{i,k}^2\|}\right\}\right).
\]

Since $P_{i,k}$ is an orthogonal projector of rank $k$, $\|P_{i,k}^2\|_F^2 = k,$ $\|P_{i,k}^2\| = 1$, and $\E_{z_i}\|P_{i,k}z_i\|^{2} = \tr(P_{i,k}) = k.$ Thus, w.p.\ at least $ 1-2e^{-t/c_2}$,
\[
\left|\|P_{i,k}z_i\|^{2}  - k\right| \leq \sigma_x^2\max(\sqrt{kt}, t)\leq (t + \sqrt{kt})\sigma_x^2.
\]

Next, by Lemma \ref{lm::sum of norms} for some constant $c_3$ and any $t \in (0,n)$ w.p. at least $1-2e^{-t/c_3}$,
\[
n - \sqrt{nt}\sigma_x^2 \leq \|z_i\|^2 \leq n + \sqrt{nt}\sigma_x^2.
\]

Take constant $c_4$ large enough depending on $\sigma_x$ and set $t = n/c_4$. Then for any $k < n/c_5$, w.p.\ at least $1 - 10 e^{-n/c_6} - \delta$,
\[
z_i^\top A_{-i+}^{-1}z_i \geq \frac{n }{c_7\left(\lambda_{k+1}n + \lambda + \sum_{i > k} \lambda_i\right)},
\]
where constants $c_5$ and $c_6$ depend only on $\sigma_x$ and constant $c_7$ depends only on $\sigma_x$ and $\gamma$.

Rewrite this equation as 
\begin{equation*}
(z_i^\top A_{-i+}^{-1}z_i)^{-1} \leq  c_7\left(\lambda_{k+1} + \frac{1}{n}\left(\lambda + \sum_{i > k} \lambda_i\right)\right) = c_7\lambda_{k+1}(\rho_k + 1),
\end{equation*}
where $\rho_k := \frac{1}{n\lambda_{k+1}}\left(\lambda + \sum_{i > k} \lambda_i\right).$

On the same event
\[
\frac{1}{\|z_i\|^2\left(1 + (\lambda_i z_i^\top A_{-i+}^{-1}z_i)^{-1}\right)^2} \geq \frac{1}{c_8n\left(1 + \frac{\lambda_{k+1}}{\lambda_i}(\rho_k + 1)\right)^2},
\]
where $c_8$ depends only on $\sigma_x$ and $\gamma$.

Finally, by Lemma \ref{lm::sum of non-neg lower bound whp}, we can convert lower bounds for separate non-negative terms into a lower bound on their sum: with probability at least $1 - 20 e^{-n/c_6} $,
\[
V \geq \frac{1}{8c_8n}\sum_{i = 1}^p\min\left\{1, \frac{\lambda_i^2}{ \lambda_{k+1}^2(\rho_k + 1)^2}\right\},
\]
where we also used that $1/(a + b)^2 \geq \min(a^{-2}, b^{-2})/4$ for non-negative $a, b$.
\end{proof}

\subsection{Bias term}
\label{sec::bias lower appendix}
\biaslower*
%\begin{lemma}[Restatement of Lemma \ref{lm::bias lower main body}]
%\label{lm::bias lower}
%Fix any constant $L > 0$. There exists $c$ that only depends on $\sigma_x$ and $L$ s.t. for any $k\in \{1, 2, %\dots, p\}$
%under assumptions \ref{as::prior on theta^*}$(\bar{\theta})$ and \ref{as::lowest eigenvalue of A_-k}$(k, %\delta, L)$  w.p. at least $1 - 2\delta - ce^{-n/c}$
%\[
%\E_{\theta^*} B \geq \frac1c\sum_i\frac{\lambda_i\bar{\theta}^2_i}{\left(1 + %\frac{\lambda_i}{\lambda_{k+1}\rho_k}\right)^2},
%\]
%where $\E_{\theta^*}$ denotes the expectation over the random draw of $\theta^*$ from the prior distribution %described in assumption \ref{as::prior on theta^*}$(\bar{\theta})$.
%\end{lemma}
\begin{proof}

Applying Sherman-Morrison-Woodbury yields
\[
\left( \lambda I_p + X^\top X\right)^{-1} = \lambda^{-1}I_p   - \lambda^{-2}X^\top(I_n + \lambda^{-1}XX^\top)^{-1}X.
\]
So, 
\begin{align*}
\left(\lambda I_p + X^\top X\right)^{-1}X^\top X - I_p
=& \left(\lambda I_p + X^\top X\right)^{-1}(\lambda I_p + X^\top X - \lambda I_p) - I_p\\
=& -\lambda\left(\lambda I_p + X^\top X\right)^{-1}\\
=& I_p   - \lambda^{-1}X^\top(I_n + \lambda^{-1}XX^\top)^{-1}X\\
=& I_p   - X^\top(\lambda I_n + XX^\top)^{-1}X.
\end{align*}

Thus, the bias term becomes 
\[
(\theta^*)^\top\left(I_p   - X^\top(\lambda I_n + XX^\top)^{-1}X\right)\Sigma\left(I_p   - X^\top(\lambda I_n + XX^\top)^{-1}X\right)\theta_*\,
\]
and taking expectation over the prior kills all the off-diagonal elements, so
\[
\E_{\theta^*} \mathcal{B} = \sum_i \left(\left(I_p   - X^\top(\lambda I_n + XX^\top)^{-1}X\right)\Sigma\left(I_p   - X^\top(\lambda I_n + XX^\top)^{-1}X\right)\right)[i,i]\cdot\bar{\theta}_i^2.
\]
Let's compute the diagonal elements of the matrix 
\[\left(I_p   - X^\top(\lambda I_n + XX^\top)^{-1}X\right)\Sigma\left(I_p   - X^\top(\lambda I_n + XX^\top)^{-1}X\right).
\]

The $i$-th diagonal element is equal to the bias term for the case when $\theta^* = e_i$ --- the $i$-th vector of the standard orthonormal basis. Note that the $i$-th row of $I_p   - X^\top(\lambda I_n + XX^\top)^{-1}X$ is equal to $e_i - \sqrt{\lambda_i}z_i^\top (\lambda I_n + XX^\top)^{-1}X,$ so the $i$-th diagonal element of the initial matrix is given by 
\begin{align*}
&\sum_{j = 1}^p \lambda_i \left(e_i[j] - \sqrt{\lambda_i\lambda_j}z_i^\top (\lambda I_n + XX^\top)^{-1}z_j\right)^2\\
&\lambda_i\left(1 - \lambda_i z_i^\top A^{-1}z_i\right)^2 + \sum_{j \neq i} \lambda_i\lambda_j^2(z_i^\top A^{-1}z_j)^2.
\end{align*}

 Recall that $A = \lambda I_n + \sum_{i = 0}^p \lambda_i z_i z_i^\top$, $A_{-i} := A - \lambda_i z_iz_i^\top.$ 

First, let's use Sherman-Morrison identity to convert $A$ in $z_i^\top A^{-1} z_i$ into $A_{-i}$:
\begin{align*}
    1 - \lambda_i z_i^\top A^{-1}z_i =& 1 - \lambda_i z_i^\top \left(A_{-i} + \lambda_i z_iz_i^\top\right)^{-1}z_i\\
	=&1 - \lambda_i z_i^\top \left(A_{-i}^{-1} - \lambda_iA_{-i}^{-1}z_i(1 + z_i^\top A_{-i}^{-1}z_i)^{-1}z_i^\top A_{-i}^{-1}\right)z_i\\
	=& 1 - \lambda_iz_i^\top A_{-i}^{-1}z_i + \frac{\left(\lambda_iz_i^\top A_{-i}^{-1}z_i\right)^2}{1 + \lambda_iz_i^\top A_{-i}^{-1}z_i}\\
	=&\frac{1}{1 + \lambda_iz_i^\top A_{-i}^{-1}z_i}.
	\end{align*}
\begin{comment}% Also transforming the second term		
		Second, $\left(\sqrt{\lambda_i\lambda_j} z_i^\top A^{-1}z_j\right)^2$ also has alternative representation:
		\begin{align*}
		&A^{-1}z_i\\
		=&\left(A_{-i}^{-1} - \lambda_iA_{-i}^{-1}z_i(1 + z_i^\top A_{-i}^{-1}z_i)^{-1}z_i^\top A_{-i}^{-1}\right)z_i\\
		=&\frac{A_{-i}^{-1}z_i}{1 + \lambda_i z_i^\top A_{-i}^{-1}z_i};
        \end{align*}
        \begin{align*}
		&\left(\sqrt{\lambda_i}\lambda_j z_i^\top A^{-1}z_j\right)^2\\
		=& \left(\sqrt{\lambda_i}\lambda_j \frac{z_j^\top A_{-i}^{-1}z_i}{1 + \lambda_i z_i^\top A_{-i}^{-1}z_i}\right)^2\\
		=& \frac{\lambda_iz_i^\top A_{-i}^{-1}(\lambda_j^2 z_j z_j^\top) A_{-i}^{-1}z_i}{\left(1 + \lambda_i z_i^\top A_{-i}^{-1}z_i\right)^2}
		\end{align*}
  So the diagonal element becomes 
		\[
		\frac{\lambda_i}{(1 + \lambda_iz_i^\top A_{-i}^{-1}z_i)^2} + \frac{\lambda_iz_i^\top A_{-i}^{-1}\left(\sum_{j \neq i}\lambda_j^2 z_j z_j^\top\right) A_{-i}^{-1}z_i}{\left(1 + \lambda_i z_i^\top A_{-i}^{-1}z_i\right)^2}
		\]

\end{comment}
  So the diagonal element becomes 
		\[
		\frac{\lambda_i}{(1 + \lambda_iz_i^\top A_{-i}^{-1}z_i)^2} +\sum_{j \neq i} \lambda_i\lambda_j^2(z_i^\top A^{-1}z_j)^2\geq \frac{\lambda_i}{(1 + \lambda_iz_i^\top A_{-i}^{-1}z_i)^2},
		\]
and thus
\[
\E_{\theta^*} {B} \geq \sum_i  \frac{\lambda_i\bar{\theta}^2_i}{(1 + \lambda_iz_i^\top A_{-i}^{-1}z_i)^2}.
\]
Let's bound each term in that sum from below with high probability. By our assumptions, for any $i$ with probability at least $1-\delta$
\[
 \mu_n(A_{-i}) \geq \frac{1}{L}\left(\lambda + \sum_{j > k} \lambda_j\right).
\]
Next, 
\[
\frac{\lambda_i}{(1 + \lambda_iz_i^\top A_{-i}^{-1}z_i)^2} \geq \frac{\lambda_i}{\left(1 + \lambda_i\mu_n(A_{-i})^{-1}\|z_i\|^2\right)^2},
\]
and by Lemma \ref{lm::sum of norms} for some absolute constant $c_1$ for any $t \in (0,n)$ w.p. at least $1-2e^{-t/c_1}$ we have $\|z_i\|^2 \leq n - \sqrt{tn}\sigma_x^2 \leq n/2$, where the last transition is true if additionally $t \leq n/(4\sigma_x^4).$

Recall that $\rho_k := \frac{\lambda + \sum_{j > k} \lambda_j}{n\lambda_{k+1}}.$ We obtain by plugging $t = n/(4\sigma_x^4)$ that w.p. at least $1 - \delta - 2e^{-n/c_2}$,
\[
 \frac{\lambda_i\bar{\theta}^2_i}{(1 + \lambda_iz_i^\top A_{-i}^{-1}z_i)^2} \geq  \frac{\lambda_i\bar{\theta}^2_i}{\left(1 + \frac{L\lambda_i}{2\lambda_{k+1}\rho_k}\right)^2},
\]
where $c_2$ only depends on $\sigma_x$.

Finally, since all the terms are non-negative and we need to obtain a lower bound on their sum, Lemma \ref{lm::sum of non-neg lower bound whp} gives the result.

\end{proof}
\ExchCoordPlusCondNum*
\begin{proof}
First of all, note that Assumption \ref{as::gamma}$(k, \gamma)$ with $\gamma < 1$ directly implies that $\lambda + \sum_{i > k}\lambda_i \geq 0$, which is the second part of Assumption \ref{as::lowest eigenvalue of A_-k}$(k, \delta, L)$.

Next, by Lemma \ref{lm::cond number for k} for some absolute constant $c_1$ for any $t \in (0, n)$ with probability  at least $1 - \delta - 2e^{-ct}$
\[
\mu_n(A_k) \geq \frac{1}{L}\left(1 - \frac{\sqrt{t}\sigma_x^2}{\sqrt{n}(1-\gamma)}\right)\left(\lambda + \sum_{i>k} \lambda_i\right).
\]
Taking $t = n/c_2$ where $c_2$ is large enough depending on $\gamma, \sigma_x$ we get that for $c$ large enough with probability at least $1 - 2e^{-n/c}$ 
\[
\mu_n(A_k) \geq \frac{1}{cL}\left(\lambda + \sum_{i>k} \lambda_i\right).
\]
Now we just need to propagate that result to $A_{-i}$ for all $i$. 

For $i \leq k$, we simply have $A_{-i} \succeq A_k$ with probability $1$, so indeed $\forall i \leq k$
\[
\Pbb\left(\mu_n(A_{-i}) \geq \frac{1}{cL}\left(\lambda + \sum_{i>k} \lambda_i\right)\right) \geq \Pbb\left(\mu_n(A_{k}) \geq \frac{1}{cL}\left(\lambda + \sum_{i>k} \lambda_i\right)\right) \geq 1 - 2e^{-n/c}.
\]

When it comes to $i > k$, we can write
\begin{align*}
A_{-i} =& \lambda I_n +  \sum_{j \neq i} \lambda_j z_j z_j^\top\\
=& \lambda I_n + \sum_{j\leq k} \lambda_j z_j z_j^\top + \sum_{j>k, j \neq i} \lambda_j z_j z_j^\top\\
\succeq& \lambda I_n + \lambda_{1}z_1z_1^\top + \sum_{j>k, j \neq i} \lambda_j z_j z_j^\top\\ 
\succeq& \lambda I_n + \lambda_{i}z_1z_1^\top + \sum_{j>k, j \neq i} \lambda_j z_j z_j^\top.
\end{align*}
Now note that due to Assumption \ref{as::exchangeable components}, the distribution of the matrix $\lambda I_n + \lambda_{i}z_1z_1^\top + \sum_{j>k, j \neq i} \lambda_j z_j z_j^\top$ is the same as the distribution of $A_k = \lambda I_n +\sum_{j>k} \lambda_j z_j z_j^\top$. Therefore
\begin{align*}
&\Pbb\left(\mu_n(A_{-i}) \geq \frac{1}{cL}\left(\lambda + \sum_{i>k} \lambda_i\right)\right)\\
\geq& \Pbb\left(\mu_n\left( \lambda I_n + \lambda_{i}z_1z_1^\top + \sum_{j>k, j \neq i} \lambda_j z_j z_j^\top\right) \geq \frac{1}{cL}\left(\lambda + \sum_{i>k} \lambda_i\right)\right)\\
=&\Pbb\left(\mu_n(A_k) \geq \frac{1}{cL}\left(\lambda + \sum_{i>k} \lambda_i\right)\right)\\
\geq& 1 - 2e^{-n/c},
\end{align*}
which finishes the proof.
\end{proof}

\section{Deriving a useful identity}
\label{sec::identity}

Motivated by the results of \cite{benign_overfitting}, we split the principal directions of the covariance matrix into two parts: small dimensional and high dimensional. The main idea of our argument is to use classical machinery (like some sort of uniform convergence argument) in the small dimensional subspace. To do this we write $\hat{\theta}( y)^\top = \bigl[\hat{\theta}( y)_\uptok^\top, \hat{\theta}( y)_\ktoinf^\top\bigr]$ and mentally split the search process for $\hat{\theta}(y)$ into two parts: first, for any fixed $\theta_\uptok$, optimize for $\theta_\ktoinf$. Then only the first $k$ coordinates are left. The result of that optimization in $\theta_\ktoinf$ is the following identity:
\begin{equation}
\label{eq::identity}
\hat{\theta}( y)_\uptok + X_\uptok^\top A_k^{-1} X_\uptok\hat{\theta}( y)_\uptok = X_\uptok^\top A_k^{-1}y.
\end{equation}
The goal of this section is to derive this identity.

\subsection{Derivation in the ridgeless case}
In the ridgeless case we are simply dealing with projections, and $\hat{\theta}(y)$ is the minimum norm interpolating solution. Note that $\hat{\theta}(y)_\ktoinf$ is also the minimum norm solution to the equation $X_\ktoinf\theta_\ktoinf = y - X_\uptok \hat{\theta}(y)_\uptok$, where $\theta_\ktoinf$ is the variable. Thus, we can write
\[
\hat{\theta}(y)_\ktoinf = X_\ktoinf^\top \left(X_\ktoinf X_\ktoinf^\top\right)^{-1}\left(y - X_\uptok \hat{\theta}(y)_\uptok\right).
\]

Now we need to minimize the norm in $\hat{\theta}(y)_\uptok$ (our choice of $\hat{\theta}(y)_\ktoinf$ already makes the solution interpolating): we need to minimize the norm of the following vector:
\[
v(\theta_\uptok) = \Bigl[\theta_\uptok^\top, \left(y - X_\uptok \theta_\uptok\right)^\top\left(X_\ktoinf X_\ktoinf^\top\right)^{-1}X_\ktoinf\Bigr]
\]

As $\theta_\uptok$ varies, this vector sweeps an affine subspace of our Hilbert space. The vector $\hat{\theta}(y)_\uptok$ gives the minimum norm if and only if for any additional vector $\eta_\uptok$ we have $v(\hat{\theta}(y)_\uptok) \perp v(\hat{\theta}(y)_\uptok + \eta_\uptok) - v(\hat{\theta}(y)_\uptok)$. Let's write out the second vector: $\forall \eta_\uptok \in \R^k$
\[
 v(\hat{\theta}(y)_\uptok + \eta_\uptok) - v(\hat{\theta}(y)_\uptok) = 
\Bigl[\eta_\uptok^\top,  - \eta_\uptok^\top X_\uptok^\top\left(X_\ktoinf X_\ktoinf^\top\right)^{-1}X_\ktoinf\Bigr]
\]
We see that the above mentioned orthogonality for any $\eta_\uptok$ is equivalent to the following:
\begin{align*}
\hat{\theta}(y)_\uptok^\top - \left(y - X_\uptok \hat{\theta}(y)_\uptok\right)^\top\left(X_\ktoinf X_\ktoinf^\top\right)^{-1}X_\uptok &= 0, \\
\hat{\theta}(y)_\uptok + X_\uptok^\top A_k^{-1} X_\uptok\hat{\theta}(y)_\uptok &= X_\uptok^\top A_k^{-1}y, \\
\end{align*}
where we replaced $X_\ktoinf X_\ktoinf^\top = :A_k$.

\subsection{Checking for the case of non-vanishing regularization}

So, now we have $\lambda \neq 0$ and we want to prove that $\hat{\theta}(y)_\uptok + X_\uptok^\top A_k^{-1} X_\uptok\hat{\theta}(y)_\uptok = X_\uptok^\top A_k^{-1}y$. Recall that
\begin{align*}
\hat{\theta}(y) =& X^\top(\lambda I_n + XX^\top)^{-1}y,\\
\hat{\theta}(y)_\uptok =& X_\uptok^\top(A_k + X_\uptok X_\uptok^\top)^{-1}y.
\end{align*}
This identity yields
\begin{align*}
&\hat{\theta}(y)_\uptok + X_\uptok^\top A_k^{-1} X_\uptok\hat{\theta}(y)_\uptok\\
=& X_\uptok^\top(A_k + X_\uptok X_\uptok^\top)^{-1}y + X_\uptok^\top A_k^{-1} X_\uptok X_\uptok^\top(A_k + X_\uptok X_\uptok^\top)^{-1}y\\
=&X_\uptok^\top A_k^{-1}( A_k + X_\uptok X_\uptok^\top)(A_k + X_\uptok X_\uptok^\top)^{-1}y\\
=&X_\uptok^\top A_k^{-1}y.
\end{align*}

\section{Variance}
\label{sec:: variance upper}
Recall that the variance term is 
\[
V = \frac{1}{v_\eps^2}\E_\eps\|\hat{\theta}( \eps)\|_\Sigma^2 = \frac{1}{v_\eps^{2}}\E_\eps\|X^\top(\lambda I_n + XX^\top)^{-1}\eps\|_\Sigma^2.
\]
In this section we prove the following lemma.
\begin{lemma}
\label{lm::variance}
If for some $k < n$ the matrix $A_k$ is PD, then
\[
V \leq\frac{\mu_1(A_k^{-1})^2\tr( X_\uptok\Sigma_\uptok^{-1} X_\uptok^\top)}{\mu_n(A_k^{-1})^2 \mu_k\left(\Sigma_\uptok^{-1/2}X_\uptok^\top X_\uptok \Sigma_\uptok^{-1/2}\right)^2} +  \mu_1(A_k^{-1})^2\tr(X_\ktoinf\Sigma_\ktoinf X_\ktoinf^\top).
\]
\end{lemma}

Note that the RHS of the inequality above is  straightforward to estimate if one knows the spectrum of $A_k$. Indeed, the matrices $X_\uptok\Sigma_\uptok^{-1} X_\uptok^\top$ and $X_\ktoinf\Sigma_\ktoinf X_\ktoinf^\top$ have i.i.d. elements on their diagonals, so their traces concentrate around expectations: 
\[
\tr( X_\uptok\Sigma_\uptok^{-1} X_\uptok^\top) \sim kn\text{ and }\tr(X_\ktoinf\Sigma_\ktoinf X_\ktoinf^\top) \sim n\sum_{i > k}\lambda_i^2,
\]
where we use $\sim$ informally to denote approximate equality with high probability.

When it comes to the matrix $\Sigma_\uptok^{-1/2}X_\uptok^\top X_\uptok \Sigma_\uptok^{-1/2}/n$, this is just a sample covariance matrix of $n$ isotropic vectors in $k$-dimensional space. Since $k$ is small compared to $n$, it concentrates around the identity. Thus, \[
\mu_k\left(\Sigma_\uptok^{-1/2}X_\uptok^\top X_\uptok \Sigma_\uptok^{-1/2}\right)\sim n.
\]
These computations are done rigorously in the proof of Theorem \ref{th::main upper main body}.

\subsection{First $k$ components}

It was shown in Section \ref{sec::identity} that the following identity holds (c.f. \eqref{eq::identity}):

\[
X_\uptok^\top A_k^{-1}\eps = \hat{\theta}( \eps)_\uptok + X_\uptok^\top A_k^{-1}X_\uptok \hat{\theta}( \eps)_\uptok.
\]
Multiplying the identity by $\hat{\theta}( \eps)_\uptok^\top$ from the left, and using that $\hat{\theta}( \eps)_\uptok^\top \hat{\theta}( \eps)_\uptok \geq 0$ we get
\begin{equation}\label{eqn:firstthetahatineq}
\hat{\theta}( \eps)_\uptok^\top X_\uptok^\top A_k^{-1}\eps 
 \geq \hat{\theta}( \eps)_\uptok^\top X_\uptok^\top A_k^{-1}X_\uptok \hat{\theta}( \eps)_\uptok.
\end{equation}

The leftmost expression is linear in $\hat{\theta}( \eps)_\uptok$, and the rightmost is quadratic. We use these expressions to bound  $\|\hat{\theta}( \eps)_\uptok\|_{\Sigma_\uptok}$.

First, we extract that norm from the quadratic part
\begin{align*}
\hat{\theta}( \eps)_\uptok^\top X_\uptok^\top A_k^{-1}X_\uptok \hat{\theta}( \eps)_\uptok
\geq& \mu_n(A_k^{-1})\hat{\theta}( \eps)_\uptok^\top X_\uptok^\top X_\uptok \hat{\theta}( \eps)_\uptok \\
\geq& \mu_n(A_k^{-1}) \|\hat{\theta}(\eps)_\uptok\|_{\Sigma_\uptok}^2\mu_k\left(\Sigma_\uptok^{-1/2}X_\uptok^\top X_\uptok \Sigma_\uptok^{-1/2}\right).
\end{align*}
%where $\Sigma_\uptok = \diag(\lambda_1, \dots, \lambda_k)$, and $\mu_l$ denotes the $l$-th largest eigenvalue of a matrix.
Then we can substitute~\eqref{eqn:firstthetahatineq} and apply Cauchy-Schwarz to obtain
\begin{align*}
\|\hat{\theta}( \eps)_\uptok\|_{\Sigma_\uptok}^2\mu_n(A_k^{-1})\mu_k\left(\Sigma_\uptok^{-1/2}X_\uptok^\top X_\uptok \Sigma_\uptok^{-1/2}\right)
& \leq \hat{\theta}( \eps)_\uptok^\top X_\uptok^\top A_k^{-1}X_\uptok \hat{\theta}( \eps)_\uptok \\
& \leq \hat{\theta}( \eps)_\uptok^\top X_\uptok^\top A_k^{-1}\eps \\
& \leq  \|\hat{\theta}( \eps)_\uptok\|_{\Sigma_\uptok}\left\|\Sigma^{-1/2}_{\uptok} X_\uptok^\top A_k^{-1}\eps\right\|,
\end{align*}
and so
\[
\|\hat{\theta}( \eps)_\uptok\|_{\Sigma_\uptok}^2 \leq \frac{\eps^\top A_k^{-1}X_\uptok\Sigma_\uptok^{-1} X_\uptok^\top A_k^{-1}\eps}{\mu_n(A_k^{-1})^2 \mu_k\left(\Sigma_\uptok^{-1/2}X_\uptok^\top X_\uptok \Sigma_\uptok^{-1/2}\right)^2}.
\]
Since $\eps$ is independent of $X$, taking expectation in $\eps$ only leaves the trace in the numerator:
\begin{align*}
\frac{1}{v_\eps^2}\E_\eps\|\hat{\theta}( \eps)_\uptok\|_{\Sigma_\uptok}^2 \leq& \frac{\tr( A_k^{-1}X_\uptok\Sigma_\uptok^{-1} X_\uptok^\top A_k^{-1})}{\mu_n(A_k^{-1})^2 \mu_k\left(\Sigma_\uptok^{-1/2}X_\uptok^\top X_\uptok \Sigma_\uptok^{-1/2}\right)^2}\\
\leq& \frac{\mu_1(A_k^{-1})^2\tr( X_\uptok\Sigma_\uptok^{-1} X_\uptok^\top)}{\mu_n(A_k^{-1})^2 \mu_k\left(\Sigma_\uptok^{-1/2}X_\uptok^\top X_\uptok \Sigma_\uptok^{-1/2}\right)^2},
\end{align*}
where we transitioned to  the second line by using the fact that $\tr(MM'M)\leq \mu_1(M)^2\tr(M')$ for PD matrices $M, M'$.
\subsection{Components starting from $k+1$-st}

The rest of the variance term is 
\[
\left\|\Sigma_\ktoinf^{1/2}X_\ktoinf^\top A^{-1}\eps\right\|^2 = \eps^\top A^{-1}X_\ktoinf\Sigma_\ktoinf X_\ktoinf^\top A^{-1}\eps.
\]
Since $\eps$ is independent of $X$, taking expectation in $\eps$ only leaves the trace of the matrix:
\begin{align*}
\frac{1}{v_\eps^2}\E_\eps\left\|\Sigma_\ktoinf^{1/2}X_\ktoinf^\top A^{-1}\eps\right\|^2 =& \tr( A^{-1}X_\ktoinf\Sigma_\ktoinf X_\ktoinf^\top A^{-1})\\
\leq& \mu_1(A^{-1})^2\tr(X_\ktoinf\Sigma_\ktoinf X_\ktoinf^\top)\\
\leq& \mu_1(A_k^{-1})^2\tr(X_\ktoinf\Sigma_\ktoinf X_\ktoinf^\top).\\
\end{align*}
Here we again used the fact that $\tr(MM'M)\leq \mu_1(M)^2\tr(M')$ for PD matrices $M, M'$ to transition to the second line. We then used $A \succeq A_k$ to infer $\mu_1(A^{-1}) \leq \mu_1(A_k^{-1})$. 

\section{Bias}
\label{sec:: bias upper}
The bias term is given by $\|\theta^* - \hat{\theta}( X\theta^*)\|_\Sigma^2$. In this section we prove the following
\begin{lemma}[Bias term]
\label{lm::bias}
Suppose that for some $k < n$ the matrix $A_k$ is PD. Then there exists an absolute constant $c$ such that
\begin{align*}
&\lefteqn{\|\theta^* - \hat{\theta}( X\theta^*)\|_\Sigma^2/c} \\
& \leq \|\theta^*_\ktoinf\|_{\Sigma_\ktoinf}^2+ \frac{\mu_1(A_k^{-1})^2}{\mu_n(A_k^{-1})^2}\frac{\mu_1\left(\Sigma^{-1/2}_\uptok X_\uptok^\top X_\uptok\Sigma_\uptok^{-1/2} \right)}{\mu_k\left(\Sigma^{-1/2}_\uptok X_\uptok^\top X_\uptok\Sigma_\uptok^{-1/2} \right)^2}\|X_\ktoinf \theta^*_\ktoinf\|^2 \\ &\qquad +\frac{\|\theta_\uptok^*\|_{\Sigma_\uptok^{-1}}^2}{\mu_n(A_k^{-1})^2\mu_k\left(\Sigma^{-1/2}_\uptok X_\uptok^\top X_\uptok\Sigma_\uptok^{-1/2} \right)^2}\\
&\qquad +\lambda_{k+1} \bigl(1+ \max(0, -\lambda)\mu_1(A_k^{-1})\bigr) \mu_1(A^{-1})\|X_\ktoinf\theta^*_\ktoinf\|^2\\
&\qquad +\lambda_{k+1} \bigl(1+ \max(0, -\lambda)\mu_1(A_k^{-1})\bigr)\frac{\mu_1(A_k^{-1})}{\mu_n(A_k^{-1})^2} \frac{\mu_1(\Sigma_{\uptok}^{-1/2}X_\uptok^\top X_\uptok\Sigma_{\uptok}^{-1/2})}{\mu_k(\Sigma_{\uptok}^{-1/2}X_\uptok^\top X_\uptok\Sigma_{\uptok}^{-1/2})^2}\|\Sigma_{\uptok}^{-1/2}\theta^*_\uptok\|^2.
\end{align*}

\end{lemma}
\subsection{First $k$ components}
We need to bound $\|\theta^*_\uptok- \hat{\theta}(y)_\uptok(\lambda, X\theta^*)\|_{\Sigma_\uptok}^2$. By Section \ref{sec::identity}, in particular identity \eqref{eq::identity}, we have
\[
\hat{\theta}( X\theta^*)_\uptok + X_\uptok^\top A_k^{-1} X_\uptok\hat{\theta}( X\theta^*)_\uptok = X_\uptok^\top A_k^{-1}X\theta^*.
\]
Denote the error vector as $\zeta:= \hat{\theta}( X\theta^*) - \theta^*$. We can rewrite the equation above as
\[
\zeta_\uptok + X_\uptok^\top A_k^{-1} X_\uptok\zeta_\uptok = X_\uptok^\top A_k^{-1}X_\ktoinf \theta^*_\ktoinf - \theta^*_\uptok.
\]

Multiplying both sides by $\zeta_\uptok^\top$ from the left and using that $\zeta_\uptok^\top \zeta_\uptok = \|\zeta_\uptok\|^2 \geq 0$ we obtain
\[
\zeta_\uptok^\top X_\uptok^\top A_k^{-1} X_\uptok\zeta_\uptok \leq \zeta_\uptok^\top X_\uptok^\top A_k^{-1}X_\ktoinf \theta^*_\ktoinf - \zeta_\uptok^\top \theta^*_\uptok.
\]

Next, divide and multiply by $ \Sigma_\uptok^{1/2}$ in several places: 
\begin{align*}
\zeta_\uptok^\top \Sigma_\uptok^{1/2} \Sigma^{-1/2}_\uptok X_\uptok^\top A_k^{-1} X_\uptok\Sigma_\uptok^{-1/2} \Sigma^{1/2}_\uptok\zeta_\uptok \leq& \zeta_\uptok^\top \Sigma_\uptok^{1/2} \Sigma^{-1/2}_\uptok X_\uptok^\top A_k^{-1}X_\ktoinf \theta^*_\ktoinf\\
&\qquad - \zeta_\uptok^\top \Sigma_\uptok^{1/2} \Sigma^{-1/2}_\uptok \theta^*_\uptok.
\end{align*}
Now we pull out the lowest singular values of the matrices in the LHS and largest singular values of the matrices in the RHS to obtain lower and upper bounds respectively, yielding
\begin{align*}
& \|\zeta_\uptok\|_{\Sigma_\uptok}^2\mu_n(A_k^{-1})\mu_k\left(\Sigma^{-1/2}_\uptok X_\uptok^\top X_\uptok\Sigma_\uptok^{-1/2} \right) \\
& \leq  \|\zeta_\uptok\|_{\Sigma_\uptok} \mu_1(A_k^{-1})\sqrt{\mu_1\left(\Sigma^{-1/2}_\uptok X_\uptok^\top X_\uptok\Sigma_\uptok^{-1/2} \right)}\|X_\ktoinf \theta^*_\ktoinf\|\\
& + \|\zeta_\uptok\|_{\Sigma_\uptok} \|\theta_\uptok^*\|_{\Sigma_\uptok^{-1}},
\end{align*}
and so
\begin{align*}
\|\zeta_\uptok\|_{\Sigma_\uptok} \leq& \frac{\mu_1(A_k^{-1})}{\mu_n(A_k^{-1})}\frac{\mu_1\left(\Sigma^{-1/2}_\uptok X_\uptok^\top X_\uptok\Sigma_\uptok^{-1/2} \right)^{1/2}}{\mu_k\left(\Sigma^{-1/2}_\uptok X_\uptok^\top X_\uptok\Sigma_\uptok^{-1/2} \right)}\|X_\ktoinf \theta^*_\ktoinf\| \\ +&\frac{\|\theta_\uptok^*\|_{\Sigma_\uptok^{-1}}}{\mu_n(A_k^{-1})\mu_k\left(\Sigma^{-1/2}_\uptok X_\uptok^\top X_\uptok\Sigma_\uptok^{-1/2} \right)}.
\end{align*}

\subsection{The rest of the components}
Recall that the full bias term is $ \|(I_p - X^\top(\lambda I_n + XX^\top)^{-1}X)\theta^*\|^2_\Sigma$ and that $A = \lambda I_n + XX^\top$. The contribution of the components of $\zeta$, starting from the $k+1$st can be bounded as follows:
\begin{multline*}
\|\theta^*_\ktoinf - X_\ktoinf^\top A^{-1}X\theta^*\|^2_{\Sigma_\ktoinf}\\
\leq 3\left(\|\theta^*_\ktoinf\|_{\Sigma_\ktoinf}^2 + \|X_\ktoinf^\top A^{-1}X_\ktoinf\theta^*_\ktoinf\|^2_{\Sigma_\ktoinf} + \|X_\ktoinf^\top A^{-1}X_\uptok\theta^*_\uptok\|^2_{\Sigma_\ktoinf}\right).
\end{multline*}

First of all, let's deal with the second term:
\begin{align*}
\|X_\ktoinf^\top A^{-1}X_\ktoinf\theta^*_\ktoinf\|^2_{\Sigma_\ktoinf}
=& \|\Sigma_\ktoinf^{1/2}X_\ktoinf^\top A^{-1}X_\ktoinf\theta^*_\ktoinf\|^2\\
\leq&\|\Sigma_\ktoinf\|\|X_\ktoinf^\top A^{-1}X_\ktoinf\theta^*_\ktoinf\|^2\\
=&\lambda_{k+1}(\theta^*_\ktoinf)^\top X_\ktoinf^\top A^{-1} \underbrace{(A - \lambda I_n - X_\uptok X_\uptok^\top)}_{X_\ktoinf X_\ktoinf^\top}A^{-1}X_\ktoinf\theta^*_\ktoinf\\
\leq& \lambda_{k+1}(\theta^*_\ktoinf)^\top X_\ktoinf^\top A^{-1} {\bigl(A + \max(0, -\lambda) I_n\bigr)}A^{-1}X_\ktoinf\theta^*_\ktoinf\\
\leq& \lambda_{k+1} \bigl(\mu_1(A^{-1}) + \max(0, -\lambda)\mu_1(A^{-1})^2\bigr) \|X_\ktoinf\theta^*_\ktoinf\|^2\\
\leq& \lambda_{k+1} \bigl(1+ \max(0, -\lambda)\mu_1(A_k^{-1})\bigr)\mu_1(A_k^{-1}) \|X_\ktoinf\theta^*_\ktoinf\|^2,
\end{align*}
where we used that $\mu_1(A_k^{-1}) \geq \mu_1(A^{-1})$ in the last transition.

Now, let's deal with the last term. Note that $A = A_k + X_\uptok X_\uptok^\top$. By the Sherman–Morrison–Woodbury formula,
\begin{align*}
A^{-1}X_\uptok =& (A_k^{-1} + X_\uptok X_\uptok^\top)^{-1}X_\uptok\\
=&\left(A_k^{-1} - A_k^{-1}X_\uptok\left(I_k + X_\uptok^\top A_k^{-1}X_\uptok\right)^{-1}X_\uptok^TA_k^{-1}\right)X_\uptok\\
=&A_k^{-1}X_\uptok\left(I_n -  \left(I_k + X_\uptok^\top A_k^{-1}X_\uptok\right)^{-1}X_\uptok^TA_k^{-1}X_\uptok\right)\\
=&A_k^{-1}X_\uptok\left(I_n -  \left(I_k + X_\uptok^\top A_k^{-1}X_\uptok\right)^{-1}\left(I_k + X_\uptok^TA_k^{-1}X_\uptok - I_k\right)\right)\\
=& A_k^{-1}X_\uptok\left(I_k + X_\uptok^\top A_k^{-1}X_\uptok\right)^{-1}.
\end{align*}
Thus, 
\begin{align*}
&\|X_\ktoinf^\top A^{-1}X_\uptok\theta^*_\uptok\|^2_{\Sigma_\ktoinf}\\
=& \|X_\ktoinf^\top A_k^{-1}X_\uptok\left(I_k + X_\uptok^\top A_k^{-1}X_\uptok\right)^{-1}\theta^*_\uptok\|^2_{\Sigma_\ktoinf}\\
=& \|\Sigma_{\ktoinf}^{1/2}X_\ktoinf^\top A_k^{-1}X_\uptok\Sigma_{\uptok}^{-1/2}\left(\Sigma_{\uptok}^{-1} + \Sigma_{\uptok}^{-1/2}X_\uptok^\top A_k^{-1}X_\uptok\Sigma_{\uptok}^{-1/2}\right)^{-1}\Sigma_{\uptok}^{-1/2}\theta^*_\uptok\|^2\\
\leq& \|A_k^{-1/2}X_\ktoinf \Sigma_\ktoinf X_\ktoinf^\top A_k^{-1/2}\|\mu_1(A_k^{-1/2})^2 \frac{\mu_1(\Sigma_{\uptok}^{-1/2}X_\uptok^\top X_\uptok\Sigma_{\uptok}^{-1/2})}{\mu_k(\Sigma_{\uptok}^{-1/2}X_\uptok^\top A_k^{-1}X_\uptok\Sigma_{\uptok}^{-1/2})^2}\|\Sigma_{\uptok}^{-1/2}\theta^*_\uptok\|^2\\
\leq&\|\Sigma_\ktoinf \|\|A_k^{-1/2}X_\ktoinf X_\ktoinf^\top A_k^{-1/2}\|\frac{\mu_1(A_k^{-1})}{\mu_n(A_k^{-1})^2} \frac{\mu_1(\Sigma_{\uptok}^{-1/2}X_\uptok^\top X_\uptok\Sigma_{\uptok}^{-1/2})}{\mu_k(\Sigma_{\uptok}^{-1/2}X_\uptok^\top X_\uptok\Sigma_{\uptok}^{-1/2})^2}\|\Sigma_{\uptok}^{-1/2}\theta^*_\uptok\|^2\\
=& \lambda_1\|I_n - \lambda A_k^{-1}\|\frac{\mu_1(A_k^{-1})}{\mu_n(A_k^{-1})^2} \frac{\mu_1(\Sigma_{\uptok}^{-1/2}X_\uptok^\top X_\uptok\Sigma_{\uptok}^{-1/2})}{\mu_k(\Sigma_{\uptok}^{-1/2}X_\uptok^\top X_\uptok\Sigma_{\uptok}^{-1/2})^2}\|\Sigma_{\uptok}^{-1/2}\theta^*_\uptok\|^2\\
\leq&\lambda_1\bigl(1 + \max(0,-\lambda) \mu_1(A_k^{-1})\bigr)\frac{\mu_1(A_k^{-1})}{\mu_n(A_k^{-1})^2} \frac{\mu_1(\Sigma_{\uptok}^{-1/2}X_\uptok^\top X_\uptok\Sigma_{\uptok}^{-1/2})}{\mu_k(\Sigma_{\uptok}^{-1/2}X_\uptok^\top X_\uptok\Sigma_{\uptok}^{-1/2})^2}\|\Sigma_{\uptok}^{-1/2}\theta^*_\uptok\|^2,\\
\end{align*}
where in the last transition we used the fact that $I_n - \lambda A_k^{-1}$ is a PSD matrix with norm bounded by 1 for $\lambda > 0$.

Putting those bounds together yields the result.

\section{Main results}
\subsection{Upper bound on the prediction MSE}
\label{subsec::upper}

\mainthm*
%\begin{theorem}[Restatement of Theorem \ref{th::main upper main body}]
%\label{th::main upper appendix}
%There exists (large) constant $c$, which only depends on $\sigma_x$ s.t. for any $k < n/c$  with probability %at least $1 - ce^{-n/c}$ if the matrix $A_k$ is PD, then 
%
%\begin{align*}
%B/c \leq& \|\theta^*_\ktoinf\|_{\Sigma_\ktoinf}^2\left(1 +  \frac{\mu_1(A_k^{-1})^2}{\mu_n(A_k^{-1})^2} + %n\lambda_{k+1}\mu_1(A_k^{-1})\left(1 +\max(0, -\lambda)\mu_1(A_k^{-1})\right) \right)\\
%+& \|\theta_\uptok^*\|_{\Sigma_\uptok^{-1}}^2\left(\frac{1}{n^2\mu_n(A_k^{-1})^2} +  %\frac{\lambda_{k+1}}{n}\frac{\mu_1(A_k^{-1})}{\mu_n(A_k^{-1})^2}\left(1 +\max(0, %-\lambda)\mu_1(A_k^{-1})\right) \right) \\
%V/c \leq& \frac{\mu_1(A_k^{-1})^2}{\mu_n(A_k^{-1})^2}\frac{k}{n} + n \mu_1(A_k^{-1})^2 \sum_{i > k} %\lambda_i^2.  \\
%\end{align*}
%\end{theorem}
\begin{proof}
Lemmas \ref{lm::variance} and \ref{lm::bias} bound the bias and variance on the event that $A_k$ is PD. Next to those lemmas we already put explanations of why those bounds are easy to assess via concentration arguments. Here we just do this rigorously.

Recall the bounds from Lemmas \ref{lm::variance} and \ref{lm::bias}: for some absolute constant $c$
\begin{align}
B/c \leq& \|\theta^*_\ktoinf\|_{\Sigma_\ktoinf}^2\\
+& \frac{\mu_1(A_k^{-1})^2}{\mu_n(A_k^{-1})^2}\frac{\mu_1\left(\Sigma^{-1/2}_\uptok X_\uptok^\top X_\uptok\Sigma_\uptok^{-1/2} \right)}{\mu_k\left(\Sigma^{-1/2}_\uptok X_\uptok^\top X_\uptok\Sigma_\uptok^{-1/2} \right)^2}\|X_\ktoinf \theta^*_\ktoinf\|^2 \label{eq::upper_bound_first_term}\\ +&\frac{\|\theta_\uptok^*\|_{\Sigma_\uptok^{-1}}^2}{\mu_n(A_k^{-1})^2\mu_k\left(\Sigma^{-1/2}_\uptok X_\uptok^\top X_\uptok\Sigma_\uptok^{-1/2} \right)^2}\\
+&\lambda_{k+1} \bigl(1+ \max(0, -\lambda)\mu_1(A_k^{-1})\bigr) \mu_1(A^{-1})\|X_\ktoinf\theta^*_\ktoinf\|^2\\
+&\lambda_{k+1} \bigl(1+ \max(0, -\lambda)\mu_1(A_k^{-1})\bigr)\frac{\mu_1(A_k^{-1})}{\mu_n(A_k^{-1})^2} \frac{\mu_1(\Sigma_{\uptok}^{-1/2}X_\uptok^\top X_\uptok\Sigma_{\uptok}^{-1/2})}{\mu_k(\Sigma_{\uptok}^{-1/2}X_\uptok^\top X_\uptok\Sigma_{\uptok}^{-1/2})^2}\|\Sigma_{\uptok}^{-1/2}\theta^*_\uptok\|^2,\\
V/c \leq &\frac{\mu_1(A_k^{-1})^2\tr(X_\uptok\Sigma_\uptok^{-1} X_\uptok^\top)}{\mu_n(A_k^{-1})^2 \mu_k\left(\Sigma_\uptok^{-1/2}X_\uptok^\top X_\uptok \Sigma_\uptok^{-1/2}\right)^2}\\
+& \mu_1(A_k^{-1})^2\tr(X_\ktoinf\Sigma_\ktoinf X_\ktoinf^\top)\label{eq::upper_bound_last_term},
\end{align}
where the first four terms correspond to the bias and the last two to the variance.
By inspecting that expression one can notice that it consists of some products of simple quantities that could be assessed individually. Namely, those quantities are:
\begin{enumerate}
\item $\mu_1(A_k^{-1})$ and $\mu_n(A_k^{-1})$ --- smallest and largest singular values of $A_k$. 
In this theorem we assume that those quantities are known or there is some oracle control over them.

\item $\mu_1\left(\Sigma_\uptok^{-1/2}X_\uptok^\top X_\uptok \Sigma_\uptok^{-1/2}\right)$ and $\mu_k\left(\Sigma_\uptok^{-1/2}X_\uptok^\top X_\uptok \Sigma_\uptok^{-1/2}\right)$. 

The matrix $X_\uptok \Sigma_\uptok^{-1/2} \in \R^{k\times n}$ has $n$ i.i.d. columns with isotropic sub-Gaussian distribution in $\R^k$. The matrix $\Sigma_\uptok^{-1/2}X_\uptok^\top X_\uptok \Sigma_\uptok^{-1/2}/n$ is the sample covariance matrix of those columns, so when $k \ll n$ it concentrates around its expectation, which is $I_k$. More precisely, by Theorem 5.39 in \citep{Vershynin_2012}, for some constants $c_x',C_x'$ (which only depend on $\sigma_x$ ) for every $t > 0$ s.t. $\sqrt{n} - C_x'\sqrt{k} - \sqrt{t} > 0$, with probability $1 - 2\exp(-c_x' t)$,
\begin{align}
\label{eq::eigvals_k_by_k}
\mu_k\left(\Sigma_\uptok^{-1/2}X_\uptok^\top X_\uptok \Sigma_\uptok^{-1/2}\right) \geq&\left(\sqrt{n} - C_x'\sqrt{k} - \sqrt{t}\right)^2,  \\
\mu_1\left(\Sigma_\uptok^{-1/2}X_\uptok^\top X_\uptok \Sigma_\uptok^{-1/2}\right) \leq& \left(\sqrt{n} + C_x'\sqrt{k} + \sqrt{t}\right)^2.
\end{align}

\item $\tr\left(X_\uptok\Sigma_\uptok^{-1} X_\uptok^\top\right)$  and $\tr\left(X_\ktoinf \Sigma_\ktoinf X_\ktoinf^\top\right)$. 

$\tr\left(X_\uptok\Sigma_\uptok^{-1} X_\uptok^\top\right)$ is the sum of squared norms of columns of $\Sigma_\uptok^{-1/2}X_\uptok^\top$, which are $n$ i.i.d. isotropic vectors in $\R^k$.  Analogously, $\tr\left(X_\ktoinf \Sigma_\ktoinf X_\ktoinf^\top\right)$ is the sum of squared norms of n i.i.d. sub-Gaussian vectors with covariance $\Sigma_\ktoinf^2$. Therefore, they concentrate around their expectations by the law of large numbers. More precisely, by Lemma \ref{lm::sum of norms} with probability at least $1 - 4e^{-c_2t}$,
\begin{align*}
\tr\left(X_\uptok\Sigma_\uptok^{-1} X_\uptok^\top\right)\leq (n + \sqrt{tn}\sigma_x^2)k,\\
\tr\left(X_\ktoinf \Sigma_\ktoinf X_\ktoinf^\top\right)\leq (n + \sqrt{tn}\sigma_x^2)\sum_{i > k} \lambda_i^2.
\end{align*}

\item $\|X_\ktoinf\theta^*_\ktoinf\|^2$.

Once again, this quantity concentrates by the law of large numbers. The vector ${X_\ktoinf\theta^*_\ktoinf}/{\|\theta^*_\ktoinf\|_{\Sigma_\ktoinf}}$ has $n$ i.i.d.\ centered components with unit variances  and sub-Gaussian norms at most $\sigma_x$.  Treating those components as sub-Gaussian vectors in $\R^1$, we can apply Lemma \ref{lm::sum of norms} to get that for any $t \in (0,n)$, with probability at least $1 - 2e^{-c_2t}$,
\[
\|X_\ktoinf\theta^*_\ktoinf\|^2\leq (n + \sqrt{tn}\sigma_x^2)\|\theta^*_\ktoinf\|_{\Sigma_\ktoinf}^2.
\]
\end{enumerate}

Now take constant $c_4$ to be large enough depending on $\sigma_x$ and set $t = n/c_4$. For some constant $c_5$ which only depends on $\sigma_x$ we get that with probability at least $1 - c_5 e^{-n/c_5}$, all the following inequalities hold at the same time:
\begin{align*}
\mu_k\left(\Sigma_\uptok^{-1/2}X_\uptok^\top X_\uptok \Sigma_\uptok^{-1/2}\right) \geq& n/c_5,\\
\mu_1\left(\Sigma_\uptok^{-1/2}X_\uptok^\top X_\uptok \Sigma_\uptok^{-1/2}\right) \leq& c_5 n,\\
\|X_\ktoinf\theta^*_\ktoinf\|^2 \leq& c_5 n \|\theta^*_\ktoinf\|_{\Sigma_\ktoinf}^2,\\
\|X_\ktoinf \Sigma_\ktoinf X_\ktoinf^\top\| \leq& c_5\left(\lambda_{k+1}^2n + \sum_{i > k} \lambda_i^2\right),\\
\tr\left(X_\uptok\Sigma_\uptok^{-1} X_\uptok^\top\right)\leq& c_5nk,\\
\tr\left(X_\ktoinf \Sigma_\ktoinf X_\ktoinf^\top\right)\leq& c_5n\sum_{i > k} \lambda_i^2.
\end{align*}

Next, plug these bounds into \eqref{eq::upper_bound_first_term}--\eqref{eq::upper_bound_last_term}:
\begin{equation*}
 \frac{\mu_1(A_k^{-1})^2}{\mu_n(A_k^{-1})^2}\frac{\mu_1\left(\Sigma^{-1/2}_\uptok X_\uptok^\top X_\uptok\Sigma_\uptok^{-1/2} \right)}{\mu_k\left(\Sigma^{-1/2}_\uptok X_\uptok^\top X_\uptok\Sigma_\uptok^{-1/2} \right)^2}\|X_\ktoinf \theta^*_\ktoinf\|^2 \leq c_5^3\frac{\mu_1(A_k^{-1})^2}{\mu_n(A_k^{-1})^2} \|\theta^*_\ktoinf\|^2_{\Sigma_\ktoinf},
 \end{equation*}
 \begin{equation*}
\frac{\|\theta_\uptok^*\|_{\Sigma_\uptok^{-1}}^2}{\mu_n(A_k^{-1})^2\mu_k\left(\Sigma^{-1/2}_\uptok X_\uptok^\top X_\uptok\Sigma_\uptok^{-1/2} \right)^2}   \leq c_5^2 \frac{\|\theta_\uptok^*\|_{\Sigma_\uptok^{-1}}^2}{\mu_n(A_k^{-1})^2n^2}, 
 \end{equation*}
 \begin{multline*}
\lambda_{k+1} \bigl(1+ \max(0, -\lambda)\mu_1(A_k^{-1})\bigr) \mu_1(A^{-1})\|X_\ktoinf\theta^*_\ktoinf\|^2 \leq\\ 
\leq c_5^2\lambda_{k+1}\left(1 +\max(0, -\lambda)\mu_1(A_k^{-1})\right) \mu_1(A_k^{-1})n\|\theta^*_\ktoinf\|^2_{\Sigma_\ktoinf},
 \end{multline*}
 \begin{multline*}
\lambda_{k+1} \bigl(1+ \max(0, -\lambda)\mu_1(A_k^{-1})\bigr)\frac{\mu_1(A_k^{-1})}{\mu_n(A_k^{-1})^2} \frac{\mu_1(\Sigma_{\uptok}^{-1/2}X_\uptok^\top X_\uptok\Sigma_{\uptok}^{-1/2})}{\mu_k(\Sigma_{\uptok}^{-1/2}X_\uptok^\top X_\uptok\Sigma_{\uptok}^{-1/2})^2}\|\Sigma_{\uptok}^{-1/2}\theta^*_\uptok\|^2\leq\\
\leq c_5^4\lambda_{k+1}\left(1 +\max(0, -\lambda)\mu_1(A_k^{-1})\right) \frac{\mu_1(A_k^{-1})}{\mu_n(A_k^{-1})^2}\frac{1}{n}\|\theta^*_\uptok\|_{\Sigma_\uptok^{-1}}^2,
 \end{multline*}
 \begin{equation*}
\frac{\mu_1(A_k^{-1})^2\tr(X_\uptok\Sigma_\uptok^{-1} X_\uptok^\top)}{\mu_n(A_k^{-1})^2 \mu_k\left(\Sigma_\uptok^{-1/2}X_\uptok^\top X_\uptok \Sigma_\uptok^{-1/2}\right)^2} \leq c_5^3\frac{\mu_1(A_k^{-1})^2}{\mu_n(A_k^{-1})^2}\frac{k}{n},
 \end{equation*}
 \begin{equation*}
 \mu_1(A_k^{-1})^2\tr(X_\ktoinf\Sigma_\ktoinf X_\ktoinf^\top) \leq c_5  \mu_1(A_k^{-1})^2 n\sum_{i > k}\lambda_i^2.
\end{equation*}

Putting all the terms together gives the result.
\end{proof}

\upperboundgivenL*
\begin{proof}
Almost all the work was already done in Lemma \ref{lm::cond number for k}. It says that for some absolute constant $c_1$ and for any $t \in (0,n)$ with probability  at least $1 - \delta - 2e^{-c_1t}$,
\[
\frac{1}{L}\left(1 - \frac{\sqrt{t}\sigma_x^2}{\sqrt{n}(1-\gamma)}\right)\left(\lambda + \sum_{i} \lambda_i\right) \leq \mu_n(A_k) \leq \mu_1(A_k) \leq  L\left(1 - \frac{\sqrt{t}\sigma_x^2}{\sqrt{n}(1-\gamma)}\right)\left(\lambda + \sum_{i } \lambda_i\right).
\]

Moreover, if   $\delta < 1-4e^{-c_1t}$, then 
\[
\rho_k \geq \frac{1 - \sigma^2\sqrt{t/n}}{L+\frac{\gamma}{1-\gamma} + \frac{\sqrt{t}\sigma^2L }{\sqrt{n}(1-\gamma)}}.
\]

We just need to choose $t$, plug these bounds into the result of Theorem \ref{th::main upper main body} and evaluate the result up to multiplicative constants.

First, choose constant $c_2$ large enough depending on $L$, $\gamma$, $\sigma_x$ , and put $t = n/c_2$. Statements above imply that if $\delta < 1 - 4e^{-n/(c_1c_2)}$, then for some constant $c_3$ which only depends on $L$, $\gamma$, $\sigma_x$, with probability at least $1 -  \delta - c_2 e^{-n/(c_1c_2)}$,
\begin{align*}
\mu_n(A_k^{-1}) =& \mu_1(A_k)^{-1} \geq \frac{1}{c_3} \left(\lambda + \sum_{i} \lambda_i\right)^{-1},\\
\mu_1(A_k^{-1}) =& \mu_n(A_k)^{-1} \leq \frac{1}{c_3}\left(\lambda + \sum_{i} \lambda_i\right)^{-1},\\
\rho_k \geq& \frac{1}{c_3}.
\end{align*}

These three inequalities allow us to evaluate the result of Theorem \ref{th::main upper main body}: let's plug them term-by-term:
\begin{itemize}
\item Since $\lambda > -\gamma \sum_{i > k} \lambda_i$, 
\[
\max(0, -\lambda) \leq \frac{\gamma}{1-\gamma} \left(\lambda + \sum_{i} \lambda_i\right).
\]

Thus,
\[
1 + \max(0, -\lambda)\mu_1(A_k^{-1})\leq 1 + \frac{\gamma}{1-\gamma} c_3,
\]
so this term is just a constant.

\item 
\[
n\lambda_{k+1} \mu_1(A_k^{-1}) \leq c_3 n\lambda_{k+1} \left(\lambda + \sum_{i} \lambda_i\right) = c_3 / \rho_k \leq c_3^2,
\]
so this term is also just a constant. 

\item 
\[
\frac{1}{n^2\mu_n(A_k^{-1})^2} \leq \frac{c_3^2}{n}\left(\lambda + \sum_{i} \lambda_i\right)^2.
\]
\item 
\begin{align*}
\frac{\lambda_{k+1}}{n}\frac{\mu_1(A_k^{-1})}{\mu_n(A_k^{-1})^2} \leq& \frac{c_3^3}{n^2}\cdot n\lambda_{k+1}\left(\lambda + \sum_{i} \lambda_i\right) \\
=& \frac{c_3^3}{n^2}\cdot \rho_k^{-1}\left(\lambda + \sum_{i} \lambda_i\right)^2\\
\leq& \frac{c_3^4}{n^2}\left(\lambda + \sum_{i} \lambda_i\right)^2.
\end{align*}
\item $\frac{\mu_1(A_k^{-1})^2}{\mu_n(A_k^{-1})^2} \leq L^2$ --- also just a constant.
\item 
\[
 n \mu_1(A_k^{-1})^2 \leq c_3^2 n \left(\lambda + \sum_{i} \lambda_i\right)^{-2}.
 \]
\end{itemize}

Plugging all these bounds in the statement of Theorem \ref{th::main upper main body} gives the result for a large enough $c$.
\end{proof}

\subsection{Upper bound matches the lower bound}
\label{subseq::upper is lower}

In the next theorem we show that the upper bound given in Theorem \ref{th::main upper main body} matches the lower bounds from Lemmas \ref{lm::var lower main body} and \ref{lm::bias lower main body} if we choose suitable $k$. Note that by Lemmas \ref{lm::cond number for k} and \ref{lm::k star must give eigvals}, being able to control the condition number of $A_{k'}$ for some $k' < n$ implies that we can choose a suitable $k$.
(Note that there are choices of $\theta^*$ and $\Sigma$ for which the lower bound $\overline{B}$ is larger than the upper bound of Lemma~5.4 in~\citep{pmlr-v119-negrea20a}; this seems to be because the proof of Lemma~B.1 in that paper applies Lemma~B.2 to a nonsymmetric matrix. This error was removed in the newer version of the same paper, which uses the results of \cite{benign_overfitting} instead.)

\uppersameaslower*
\begin{proof}
First of all, we represent
\begin{align*}
\|\theta^*_\ktoinf\|_{\Sigma_\ktoinf}^2 +  \|\theta_\uptok^*\|_{\Sigma_\uptok^{-1}}^2\left(\frac{\lambda + \sum_{i > k} \lambda_i}{n}\right)^2 =& \sum_i \left(\mathbbm{1}\{i \leq k\}\frac{|\theta_i^*|^2\rho_k^2\lambda_{k+1}^2}{\lambda_i} + \mathbbm{1}\{i > k\}\lambda_i|\theta_i^*|^2\right)\\
\frac{k}{n} + \frac{n\sum_{i > k} \lambda_i^2}{\left(\lambda + \sum_{i > k} \lambda_i\right)^2}=& \sum_i \left(\mathbbm{1}\{i \leq k\}\frac1n + \mathbbm{1}\{i > k\}\frac{\lambda_i^2}{n\lambda_{k+1}^2\rho_k^2}\right)
\end{align*}

In the following we will bound the ratio of the sums from the statement of the theorem by bounding the ratios of the corresponding terms.

\begin{itemize}
\item First case:  $\rho_k \in (a, b)$.
\begin{itemize}
\item Bias term:
\begin{itemize}
\item $i \leq k$:
\begin{align*}
&\frac{\lambda_i|\theta_i^*|^2}{\left(1 + \frac{\lambda_i}{\lambda_{k+1}\rho_k}\right)^2} : \frac{|\theta_i^*|^2\rho_k^2\lambda_{k+1}^2}{\lambda_i}\\
=& \frac{\lambda_i^2}{\rho_k^2\lambda_{k+1}^2\left(1 + \frac{\lambda_i}{\lambda_{k+1}\rho_k}\right)^2}\\
=&\left(1 + \frac{\lambda_{k+1}\rho_k}{\lambda_i}\right)^{-2}\\
\in& \left((1+b)^{-2},1\right)
\end{align*}

\item $i > k$:
\begin{align*}
&\frac{\lambda_i|\theta_i^*|^2}{\left(1 + \frac{\lambda_i}{\lambda_{k+1}\rho_k}\right)^2} : \lambda_i|\theta_i^*|^2\\
=&\left(1 + \frac{\lambda_i}{\lambda_{k+1}\rho_k}\right)^{-2}\\
\in&\left((1+a^{-1})^{-2},1\right)
\end{align*}
\end{itemize}
\item Variance term:
\begin{itemize}
\item $i \leq k$:
\begin{align*}
&\frac1n\min\left\{1, \frac{\lambda_i^2}{ \lambda_{k+1}^2(\rho_k + 1)^2}\right\} : \frac1n\\
\in& \left((1+b)^{-2},1\right]
\end{align*}
\item $i > k$:
\begin{align*}
&\frac1n\min\left\{1, \frac{\lambda_i^2}{ \lambda_{k+1}^2(\rho_k + 1)^2}\right\} : \frac{\lambda_i^2}{n\lambda_{k+1}^2\rho_k^2}\\
=& \frac{\lambda_i^2}{ \lambda_{k+1}^2(\rho_k + 1)^2}:\frac{\lambda_i^2}{\lambda_{k+1}^2\rho_k^2}\\
=& \frac{\rho_k^2}{ (\rho_k + 1)^2}\\
\in& \left((1 +  a^{-1})^{-2},1\right)
\end{align*}
\end{itemize}
\end{itemize}
\item Second case: $k = \min\{l: \rho_l > b\}$. In this case we have
\begin{gather*}
\rho_{k} \geq b,\\
\frac{\lambda_k + n\lambda_{k+1} \rho_k}{n\lambda_k} = \frac{\lambda + \lambda_k + \sum_{i > k} \lambda_i}{n\lambda_k} = \rho_{k-1} < b,\\
\forall i \leq k:\quad \lambda_i \geq \lambda_k \geq \frac{n\lambda_{k+1}\rho_k}{nb-1} = \frac{\lambda_{k+1}\rho_k}{b} \geq \frac{\lambda_{k+1}\rho_k}{b}.
\end{gather*}

The rest of the computation is analogous to the previous case:
\begin{itemize}
\item Bias term:
\begin{itemize}
\item $i \leq k$:
\begin{align*}
&\frac{\lambda_i|\theta_i^*|^2}{\left(1 + \frac{\lambda_i}{\lambda_{k+1}\rho_k}\right)^2} : \frac{|\theta_i^*|^2\rho_k^2\lambda_{k+1}^2}{\lambda_i}\\
=& \frac{\lambda_i^2}{\rho_k^2\lambda_{k+1}^2\left(1 + \frac{\lambda_i}{\lambda_{k+1}\rho_k}\right)^2}\\
=&\left(1 + \frac{\lambda_{k+1}\rho_k}{\lambda_i}\right)^{-2}\\
\in& \left[(1+b)^{-2},1\right)
\end{align*}

\item $i > k$:
\begin{align*}
&\frac{\lambda_i|\theta_i^*|^2}{\left(1 + \frac{\lambda_i}{\lambda_{k+1}\rho_k}\right)^2} : \lambda_i|\theta_i^*|^2\\
=&\left(1 + \frac{\lambda_i}{\lambda_{k+1}\rho_k}\right)^{-2}\\
\in&\left[(1+b^{-1})^{-2},1\right)
\end{align*}
\end{itemize}
\item Variance term:
\begin{itemize}
\item $i \leq k$:
\begin{align*}
&\frac1n\min\left\{1, \frac{\lambda_i^2}{ \lambda_{k+1}^2(\rho_k + 1)^2}\right\} : \frac1n\\
\in& \left[\frac{\lambda_{k+1}^2\rho_k^2/b^2}{ \lambda_{k+1}^2(\rho_k + 1)^2},1\right]\\
\subseteq& \left[\frac{b^2}{(b+1)^2b^2}, 1\right]\\
=& \left[(b + 1)^{-2},1\right]
\end{align*}
\item $i > k$:
\begin{align*}
&\frac1n\min\left\{1, \frac{\lambda_i^2}{ \lambda_{k+1}^2(\rho_k + 1)^2}\right\} : \frac{\lambda_i^2}{n\lambda_{k+1}^2\rho_k^2}\\
=& \frac{\lambda_i^2}{ \lambda_{k+1}^2(\rho_k + 1)^2}:\frac{\lambda_i^2}{\lambda_{k+1}^2\rho_k^2}\\
=& \frac{\rho_k^2}{ (\rho_k + 1)^2}\\
\in& \left[(1 + b^{-1})^{-2},1\right]
\end{align*}
\end{itemize}
\end{itemize}
\end{itemize}
\end{proof}
\subsection{Alternative form of the main bound}
\label{sec::alternative form appendix}

\nokinaltform*
\begin{proof}
\begin{align*}
\lambda_{k^* + 1} \rho_{k*} =& \lambda_{k+1}\rho_{k} + \frac{1}{n}\sum_{i=k^* + 1}^k \lambda_i\\
\leq& \lambda_{k+1}\rho_{k} + \frac{k - k^*}{n}\lambda_{k^* + 1}\\
=& \lambda_{k+1}\rho_{k} + \frac{k - k^*}{n}\frac{\lambda_{k^* + 1} \rho_{k*}}{\rho_{k*}}\\
\leq& \lambda_{k+1}\rho_{k} + \frac{\lambda_{k^* + 1} \rho_{k*}}{bc},\\
\end{align*}
where we used $k - k^* < n/c$ and $\rho_{k^*} > b$ in the last transition. Moving $\frac{\lambda_{k^* + 1} \rho_{k*}}{bc}$ to the left-hand side and dividing both sides by $(1 - b^{-1}c^{-1})$ gives the result.
\end{proof}

\lambdadominatestail*

\begin{proof}
Set $\gamma = 0$ and denote $c_1$ to be the constant $c$ from Lemma \ref{lm::controlling A_k iff small ball and rank}. Take $L = 2c_1$ and $b = L^2$, $a = b/2$.  For such choice of $\gamma, L, a, b$ denote $c_2$ to be the constant from Theorem \ref{th::showcase upper} and take any $\tilde{k} < n/c_2$.

Take any $\lambda$ s.t. 
\[
\lambda \geq 2\sum_{i > \tilde{k}}\lambda_i \quad\text{ and }\quad \rho_{\tilde{k}} \geq L^2,
\]
i.e.,
\[
\lambda \geq \max\left(2\sum_{i > \tilde{k}}\lambda_i,\; L^2n\lambda_{\tilde{k}+1} - \sum_{i > \tilde{k}}\lambda_i\right).
\]

Then the conditions of the first part of Lemma \ref{lm::controlling A_k iff small ball and rank} are satisfied with $\delta = 0$, which means that with probability $1 - c_1e^{-n/c_1}$, $\mu_n(A_{\tilde{k}})\geq L^{-1}\mu_1(A_{\tilde{k}})$, so the assumptions of the first part of Theorem \ref{th::showcase upper} are satisfied with $\delta = c_1e^{-n/c_1}$ and $\bar{k} = \tilde{k}$. Note also that since $\rho_{\tilde{k}} \geq L^2 = b$, then $k^* \leq \tilde{k}$. This means that with probability at least $1 - c_1e^{-n/c_1} - c_2e^{-n/c_2}$, for $k = k^*$,
\begin{align*}
B/c_2 \leq& \|\theta^*_\ktoinf\|_{\Sigma_\ktoinf}^2 +  \|\theta_\uptok^*\|_{\Sigma_\uptok^{-1}}^2\left(\frac{\lambda + \sum_{i > k} \lambda_i}{n}\right)^2,\\
V/c_2 \leq& \frac{k}{n} + \frac{n\sum_{i > k} \lambda_i^2}{\left(\lambda + \sum_{i > k} \lambda_i\right)^2}.  \\
\end{align*}

Now since $k = k^*$, by Theorem \ref{th:upper same as lower main body} there exists a large constant $c_3$ (that depends on $b$ and $c_2$) such that on the same event,
 \begin{align*}
B/c_3 \leq& \sum_i\lambda_i|\theta_i^*|^2 \frac{\rho_k^2\lambda_{k^*+1}^2}{\left(\rho_{k^*}\lambda_{k^*+1} + \lambda_i\right)^2},\\
V/c_3\leq&  \frac1n \sum_i\frac{\lambda_i^2}{\left(\rho_{k^*}\lambda_{k^*+1} + \lambda_i\right)^2},
\end{align*}
where $\tilde{B}$ and $\tilde{V}$ are defined in Equations \eqref{eq::bias through weighted combination}--\eqref{eq::variance through weighted combination}. 

We've just cast  the bounds to the alternative form, which allows us to transition from $k^*$ to the initial value $\tilde{k}$. By Lemma \ref{lm::change of k is negligible in alternative form} since $n/c_2 \geq \tilde{k}\geq k^*$ there exists a constant $c_4$ that depends on $c_2, c_3, b$ such that on the same event
 \begin{align*}
B/c_4 \leq& \sum_i\lambda_i|\theta_i^*|^2 \frac{\rho_k^2\lambda_{k+1}^2}{\left(\rho_k\lambda_{k+1} + \lambda_i\right)^2},\\
V/c_4\leq& \frac1n \sum_i\frac{\lambda_i^2}{\left(\rho_k\lambda_{k+1} + \lambda_i\right)^2}.
\end{align*}

Finally, since $\lambda > 2\sum_{i > k}\lambda_i$, we have
\[
\lambda/n \leq \rho_k\lambda_{k+1} = \frac{1}{n}\left(\lambda + \sum_{i > k}\lambda_i\right) \leq 1.5\lambda/n.
\]
Thus, on the same event
 \begin{align*}
B/(2.25c_4) \leq& \sum_i\lambda_i|\theta_i^*|^2 \frac{(\lambda/n)^2}{\left(\lambda/n + \lambda_i\right)^2},\\
V/(2.25c_4) \leq&  \frac1n \sum_i\frac{\lambda_i^2}{\left(\lambda/n + \lambda_i\right)^2}.
\end{align*}

To finish the proof take $c = \max(2.25c_4, c_1 + c_2, L^2)$ and $\tilde{k} = \lfloor n/c\rfloor$.

\end{proof}

\invertstiltjes*
\begin{proof}
\[
d(\lambda/n) = \sum_i \frac{\lambda_i}{\lambda_i + c\lambda_{\lfloor n/c\rfloor} + \frac{2}{n}\sum_{i > \lfloor n/c\rfloor}\lambda_i}.
\]
Consider two cases: \begin{enumerate}
\item[{\bf Case 1:}] $(1 + c)\lambda_{\lfloor n/c\rfloor} \geq \frac{2}{n}\sum_{i > \lfloor n/c\rfloor}\lambda_i$. Then
\begin{align*}
&\sum_i \frac{\lambda_i}{\lambda_i + c\lambda_{\lfloor n/c\rfloor} + \frac{2}{n}\sum_{i > \lfloor n/c\rfloor}\lambda_i}\\
\geq& \sum_i \frac{\lambda_i}{\lambda_i + (1+2c)\lambda_{\lfloor n/c\rfloor}}\\
\geq& \sum_{i\leq \lfloor n/c\rfloor} \frac{\lambda_i}{\lambda_i +(1 +  2c)\lambda_{\lfloor n/c\rfloor}}\\
\geq& \sum_{i\leq \lfloor n/c\rfloor} \frac{\lambda_i}{\lambda_i(2 + 2c)}\\
=& \frac{\lfloor n/c\rfloor}{2 + 2c}.
\end{align*}
\item[{\bf Case 2:}]  $(1 + c)\lambda_{\lfloor n/c\rfloor} < \frac{2}{n}\sum_{i > \lfloor n/c\rfloor}\lambda_i$. Then
\begin{align*}
&\sum_i \frac{\lambda_i}{\lambda_i + c\lambda_{\lfloor n/c\rfloor} + \frac{2}{n}\sum_{i > \lfloor n/c\rfloor}\lambda_i}\\
\geq& \sum_{i > \lfloor n/c\rfloor} \frac{\lambda_i}{\lambda_i + c\lambda_{\lfloor n/c\rfloor} + \frac{2}{n}\sum_{i > \lfloor n/c\rfloor}\lambda_i}\\
\geq& \sum_{i > \lfloor n/c\rfloor} \frac{\lambda_i}{(1 + c)\lambda_{\lfloor n/c\rfloor} + \frac{2}{n}\sum_{i > \lfloor n/c\rfloor}\lambda_i}\\
\geq&\sum_{i > \lfloor n/c\rfloor} \frac{\lambda_i}{ \frac{4}{n}\sum_{i > \lfloor n/c\rfloor}\lambda_i}\\
=& \frac{n}{4}.
\end{align*}
A straightforward computation shows that if $n \geq c^2 + c$ then $n/c - 1 \geq n/(c+1)$, so
\[
 \frac{\lfloor n/c\rfloor}{2 + 2c} \geq \frac{n}{2(c+1)^2},
\]
 which finishes the proof.
\end{enumerate}
\end{proof}

\section{Negative regularization}
\label{sec::negative regularization appendix}

\uniformlower*
\begin{proof}
We start exactly as in the proof of Lemma \ref{lm::bias lower main body}, where it was shown that if $A_{-i}$ is PSD for every $i$ (which is satisfied almost surely when $\lambda \geq 0$ ) then
\begin{equation}
\label{eq::algebraic lower bound bias}
\E_{\theta^*} B \geq \sum_i  \frac{\lambda_i\bar{\theta}^2_i}{(1 + \lambda_iz_i^\top A_{-i}^{-1}z_i)^2} \geq \sum_i  \frac{\lambda_i\bar{\theta}^2_i}{(1 + \lambda_i\mu_n( A_{-i}^{-1})\|z_i\|^2)^2}.
\end{equation}

Note that have $\mu_n( A_{-i}^{-1})$ is a decreasing function of $\lambda$ with probability 1. Thus, the right-hand side of \eqref{eq::algebraic lower bound bias} is a non-decreasing function of $\lambda$ with probability 1, and any lower bound for it when $\lambda = 0$ will also hold uniformly for all $\lambda \geq 0$. Thus, for the remainder of the proof, fix $\lambda = 0$.

We are going to use Lemma \ref{lm::tight control of eigenvalues for independent coordinates} to lower bound $\mu_n(A_{-i})$ for each $i$ separately (we are {\em not} looking for a uniform bound over all $i$ simultaneously). If $i \leq k$, then $A_{-i} \succeq A_{k}$ with probability 1, so we can just use  Lemma \ref{lm::tight control of eigenvalues for independent coordinates} directly. If $i > k$, consider the following matrix:
\[
X_\ktoinf^{(i)} := [\sqrt{\lambda_{k+1}}z_{k+1}, \dots, \sqrt{\lambda_{i-1}}z_{i-1} , \sqrt{\lambda_{i}}z_{1}, \sqrt{\lambda_{i+1}}z_{i+1}, \dots, \sqrt{\lambda_{p}}z_{p}].
\]
In words, we took matrix $X$, multiplied the first column by $\sqrt{\lambda_{i}/\lambda_1}$ (to make the variances equal to $\lambda_i$), swapped the first column with the $i$-th column and dropped the first $k$ columns. The purpose of this matrix is to write the following:
\[
A_{-i} = \sum_{j \neq i} \lambda_j z_j z_j^\top  \succeq \lambda_{i} z_1 z_1^\top +  \sum_{j > k, j \neq i} \lambda_j z_j z_j^\top =  X_\ktoinf^{(i)}(X_\ktoinf^{(i)})^\top.
\]
Thus, to lower bound $\mu_n(A_{-i})$ one can just lower bound $\mu_n(X_\ktoinf^{(i)}(X_\ktoinf^{(i)})^\top)$. This can be done by using Lemma \ref{lm::tight control of eigenvalues for independent coordinates} with matrix $X_\ktoinf^{(i)}$ instead of $X_\ktoinf$, which is valid because matrix $X_\ktoinf^{(i)}$ satisfies exactly the same assumptions, namely the matrix $X_\ktoinf^{(i)}\Sigma^{-1/2}_\ktoinf$ has independent centered $\sigma_x$-sub-Gaussian elements with unit variances.

Therefore, by Lemma  \ref{lm::tight control of eigenvalues for independent coordinates}  for some constant $c_1$ that only depends on $\sigma_x$ for any $i$ with probability at least $1 - c_1e^{-n/c_1}$,
\begin{align*}
\mu_n(A_{-i}) \geq& \sum_{i > k} \lambda_i - c_1\left(n\lambda_{k+1} + \sqrt{n\sum_{i > k}\lambda_i^2}\right)\\
\geq& \left(1 - c_1\rho_k(0)^{-1} - c_1\rho_k(0)^{-1/2}\right)\sum_{i > k} \lambda_i\\
=& n\lambda_{k+1}(\rho_k - c_1 - c_1\sqrt{\rho_k}),
\end{align*}
where we used Equations \eqref{eq::eigenvalue deviation via rho_k first} and \eqref{eq::eigenvalue deviation via rho_k second}. Choose a constant $b$ large enough depending on $c_1$, so that $\rho_k - c_1 - c_1\sqrt{\rho_k} \geq \rho_k/c_2$ for some constant $c_2$ that only depends on $\sigma_x$. 

By Lemma \ref{lm::sum of norms}, for some absolute constant $c_3$ for any $t \in (0,n)$, w.p.\ at least $1-2e^{-t/c_3}$, we have $\|z_i\|^2 \leq n - \sqrt{tn}\sigma_x^2 \leq n/2$, provided $t \leq n/(4\sigma_x^4).$ Combining it with the previous results and taking constant $c_4$ large enough depending on $\sigma_x$ and $c_2$ we get that if $\rho_k > c_4$ then for any $i$ with probability at least $1 - c_4 e^{-n/c_4}$,
 \[
 \frac{\lambda_i\bar{\theta}^2_i}{(1 + \lambda_i\mu_n( A_{-i}^{-1})\|z_i\|^2)^2} \geq \frac{1}{c_4} \frac{\lambda_i\bar{\theta}^2_i}{(1 + \frac{n\lambda_i}{n\lambda_{k+1}\rho_k})^2} = \frac{1}{c_4} \frac{\lambda_i\bar{\theta}^2_i}{(1 + \frac{\lambda_i}{\lambda_{k+1}\rho_k})^2}.
 \]
 Now we convert the high-probability lower bound for each term into the high-probability lower bound for the whole sum. Using Lemma \ref{lm::sum of non-neg lower bound whp} gives that with probability at least $1 - 2c_4e^{-n/c_4}$,
 \[
 \E_{\theta^*} B \geq \frac{1}{2c_4}\sum_i \frac{\lambda_i\bar{\theta}^2_i}{(1 + \frac{\lambda_i}{\lambda_{k+1}\rho_k})^2}.
 \]

Finally, by Theorem \ref{th:upper same as lower main body} there exists a constant $c_5$ that only depends on $b$ s.t. 
\[
\sum_{i} \frac{\lambda_i\bar{\theta}^2_i}{(1 + \frac{\lambda_i}{\lambda_{k+1}\rho_k})^2} \geq \frac{1}{c_5}\|\theta_\uptok\|_{\Sigma_\uptok^{-1}}^2\left(\frac{\sum_{i > k}\lambda_i}{n}\right)^2.
\]

Therefore, setting the constant $c$ large enough (depending on $b$ and $\sigma_x$) gives the result.
\end{proof}

\negreggeneralization*
\begin{proof}
In the following $c_1, c_2, \dots$ are constants that only depend on $\sigma_x$.

Let's introduce a new variable $\Diamond$ such that $\lambda = -\sum_{i > k} \lambda_i + \Diamond$. 

By Lemma \ref{lm::tight control of eigenvalues for independent coordinates} with probability at least $1 - c_1e^{-n/c_1}$,
\begin{align*}
\mu_1(A_k) = \lambda + \mu_1(X_\ktoinf X_\ktoinf^\top) \leq& \Diamond + c_1\left(n\lambda_{k+1} + \sqrt{n\sum_{i > k} \lambda_i^2}\right),\\
\mu_n(A_k) = \lambda + \mu_n(X_\ktoinf X_\ktoinf^\top) \geq& \Diamond - c_1\left(n\lambda_{k+1} + \sqrt{n\sum_{i > k} \lambda_i^2}\right).
\end{align*}

Let's put 
\begin{equation}
\label{eq::diamond range}
\sum_{i > k} \lambda_i > \Diamond > 2 c_1\left(n\lambda_{k+1} + \sqrt{n\sum_{i > k} \lambda_i^2}\right).
\end{equation}
Note that the range for  $\Diamond$ is non-empty if $\rho_k$ is large enough according to Equations \eqref{eq::eigenvalue deviation via rho_k first} and \eqref{eq::eigenvalue deviation via rho_k second}.
On the same event we get
\begin{align*}
\mu_n(A_k^{-1})^{-1} = \mu_1(A_k) \leq& \frac32 \Diamond,&\mu_n(A_k^{-1}) \geq \frac23\Diamond^{-1},\\
\mu_1(A_k^{-1})^{-1} = \mu_n(A_k)  \geq& \frac12\Diamond,&\mu_1(A_k^{-1}) \leq 2\Diamond^{-1}.
\end{align*}

Now we are in a position to use Theorem \ref{th::main upper main body}. Recall that $0 < \Diamond < \sum_{i > k}\lambda_i$. Thus
\[
\max(0, -\lambda) = -\lambda =  \sum_{i > k}\lambda_i - \Diamond \leq \sum_{i > k}\lambda_i .
\]

Note that results of Theorem \ref{th::main upper main body} still apply for the case when the expectation of the bias term is taken over the prior from assumption \ref{as::prior on theta^*}$(\bar{\theta})$. Indeed, as explained in the sketch of its proof, it decomposes very clearly into an algebraic and a stochastic part, where concentration results are applied. One can see that the only stochastic quantity that changes when the expectation over $\theta^*$ is taken is $\|X_\uptok\theta^*_\uptok\|^2.$ To obtain the result of the theorem one needs to show that $\E_\theta^*\|X_\ktoinf\theta^*_\ktoinf\|^2 \leq \tilde{c}\|\bar{\theta}_\ktoinf\|^2_{\Sigma_\ktoinf}$ with probability $1 - \tilde{c}e^{-n/\tilde{c}}$ for some $\tilde{c}$ that only depends on $\sigma_x$. This is indeed the case because expectations over $\theta^*$ of the squared components of $X_\ktoinf\theta^*_\ktoinf$ are i.i.d. sub-exponential random variables with expectation $\|\bar{\theta}_\ktoinf\|^2_{\Sigma_\ktoinf}$ and sub-exponential norm bounded by $\bar{c}\|\bar{\theta}_\ktoinf\|^2_{\Sigma_\ktoinf}$ for a constant $\bar{c}$ that only depends on $\sigma_x$. Thus, the desired concentration result holds by the same application of Bernstein's inequality as in Lemma \ref{lm::sum of norms}.

Thus, we can plug our bounds on eigenvalues into Theorem \ref{th::main upper main body} to get  that if $k < n/c_2$ then with probability at least $1 - c_1 e^{-n/c_1} - c_2 e^{-n/c_2}$,
\begin{align*}
\E_{\theta}B/c_2 \leq& \|\bar{\theta}_\ktoinf\|_{\Sigma_\ktoinf}^2\left(1 +  \frac{(2\Diamond^{-1})^2}{\left(\frac23\Diamond^{-1}\right)^2} + n\lambda_{k+1}(2\Diamond^{-1})\left(1 +(2\Diamond^{-1}) \sum_{i > k}\lambda_i\right) \right)\\
+& \|\bar{\theta}_\uptok\|_{\Sigma_\uptok^{-1}}^2\left(\frac{1}{n^2\left(\frac23\Diamond^{-1}\right)^2} +  \frac{\lambda_{k+1}}{n}\frac{(2\Diamond^{-1})}{\left(\frac23\Diamond^{-1}\right)^2}\left(1 +(2\Diamond^{-1}) \sum_{i > k}\lambda_i\right) \right), \\
V/c_2 \leq& \frac{(2\Diamond^{-1})^2}{\left(\frac23\Diamond^{-1}\right)^2}\frac{k}{n} + n (2\Diamond^{-1})^2 \sum_{i > k} \lambda_i^2.  \\
\end{align*}

Recall that $\Diamond < \sum_{i > k}\lambda_i$, so $1 + (2\Diamond^{-1})\sum_{i > k}\lambda_i$ is the same as $\Diamond^{-1} \sum_{i > k}\lambda_i$ up to a constant multiplier. That is, on the same event,
\begin{align}
B/c_3 \leq& \|\bar{\theta}_\ktoinf\|_{\Sigma_\ktoinf}^2\left(1 + \frac{n\lambda_{k+1}\sum_{i > k}\lambda_i}{\Diamond^2}\right)\label{eq::bias via Diamond first}\\
+& \|\bar{\theta}_\uptok\|_{\Sigma_\uptok^{-1}}^2\left(\frac{\Diamond^2}{n^2} + \frac{\lambda_{k+1}\sum_{i > k}\lambda_i}{n} \right), \label{eq::bias via Diamond second}\\
V/c_3 \leq& \frac{k}{n} +  \frac{n\sum_{i > k} \lambda_i^2}{\Diamond^2}. \label{eq::variance via Diamond}
\end{align}

One can see that $\Diamond$ balances the bias in the first $k$ components against two things: the bias in the tail and the variance. The value of $\Diamond$ that is optimal to balance the bias in the first $k$ components and the bias in the tail is $\sqrt{n\lambda_{k+1}\sum_{i > k}\lambda_i}$. As we will check further, up to a constant factor, $\Diamond$  will be in the range that we set in Equation \eqref{eq::diamond range}. There are two cases then: the first case is when this choice of $\Diamond$ is optimal because the variance is not larger than the bias. The second case is when $\Diamond$ needs to be chosen larger than $\sqrt{n\lambda_{k+1}\sum_{i > k}\lambda_i}$ to decrease the variance. So, consider two cases:
\begin{enumerate}
\item If the noise is small, meaning that
\[
v_\eps^2\frac{n\sum_{i > k} \lambda_i^2}{n\lambda_{k+1}\sum_{i > k}\lambda_i} \leq \|\bar{\theta}_\ktoinf\|_{\Sigma_\ktoinf}^2 +  \|\bar{\theta}_\uptok\|_{\Sigma_\uptok^{-1}}^2 \frac{\lambda_{k+1}\sum_{i > k}\lambda_i}{n},
\]
 then set
 \[
 \Diamond = a\sqrt{n\lambda_{k+1}\sum_{i > k}\lambda_i}
 \]
 for a constant $a$ that only depends on $\sigma_x$ that we will choose next.
 This $a$ must be such that Equation~\eqref{eq::diamond range} is satisfied, which means
 \begin{align*}
 a\sqrt{n\lambda_{k+1}\sum_{i > k}\lambda_i} \leq& \sum_{i > k}\lambda_i,\\
 a\sqrt{n\lambda_{k+1}\sum_{i > k}\lambda_i} \geq& 2 c_1\left(n\lambda_{k+1} + \sqrt{n\sum_{i > k} \lambda_i^2}\right).
 \end{align*}
 Using  $\sqrt{n\sum_{i > k} \lambda_i^2} \leq \sqrt{n\lambda_{k+1}\sum_{i > k}\lambda_i}$ we obtain that it is enough for $a$ to satisfy
 \begin{align*}
 a \leq& \rho_k(0)^{1/2},\\
 a \geq& 2 c_1\left(\rho_k(0)^{-1/2} + 1\right).
 \end{align*} 
One can see that $a = 4c_1$ satisfies this condition when $c > \max(1, 16c_1^2)$ since  $\rho_k(0) > c$. Taking such an $a$, plugging $\Diamond$ into Equations \eqref{eq::bias via Diamond first}--\eqref{eq::variance via Diamond}, and choosing $c_4$ big enough depending on $a, c_1, c_2, c_3$, we get that with probability at least $1 - c_4e^{-n/c_4}$,
\begin{align*}
B + v_\eps^2V \leq& c_4\left(\frac{k}{n} v_\eps^2 + v_\eps^2\frac{n\sum_{i > k} \lambda_i^2}{n\lambda_{k+1}\sum_{i > k}\lambda_i} + \|\bar{\theta}_\ktoinf\|_{\Sigma_\ktoinf}^2 +  \|\bar{\theta}_\uptok\|_{\Sigma_\uptok^{-1}}^2 \frac{\lambda_{k+1}\sum_{i > k}\lambda_i}{n}\right)\\
\leq& 2c_4\left(\frac{k}{n} v_\eps^2 +  \|\bar{\theta}_\ktoinf\|_{\Sigma_\ktoinf}^2 +  \|\bar{\theta}_\uptok\|_{\Sigma_\uptok^{-1}}^2 \frac{\lambda_{k+1}\sum_{i > k}\lambda_i}{n}\right),
\end{align*}
which implies the desired bound for any $c > 2c_4$.

\item 
If the noise is large, meaning that
\begin{equation}
\label{eq::negative regularization large noise}
v_\eps^2\frac{n\sum_{i > k} \lambda_i^2}{n\lambda_{k+1}\sum_{i > k}\lambda_i} > \|\bar{\theta}_\ktoinf\|_{\Sigma_\ktoinf}^2 +  \|\bar{\theta}_\uptok\|_{\Sigma_\uptok^{-1}}^2 \frac{\lambda_{k+1}\sum_{i > k}\lambda_i}{n},
\end{equation}
then set 
\[
\Diamond = a\sqrt{\frac{v_\eps}{\|\bar{\theta}_\uptok\|_{\Sigma_\uptok^{-1}}}n\sqrt{n\sum_{i > k} \lambda_i^2}}.
\]
for a constant $a$ that only depends on $\sigma_x$ that we choose next. As in the previous case, $a$ must be such that Equation \eqref{eq::diamond range} is satisfied, which means
 \begin{align*}
 a\sqrt{\frac{v_\eps}{\|\bar{\theta}_\uptok\|_{\Sigma_\uptok^{-1}}}n\sqrt{n\sum_{i > k} \lambda_i^2}} \leq& \sum_{i > k}\lambda_i,\\
 a\sqrt{\frac{v_\eps}{\|\bar{\theta}_\uptok\|_{\Sigma_\uptok^{-1}}}n\sqrt{n\sum_{i > k} \lambda_i^2}} \geq& 2 c_1\left(n\lambda_{k+1} + \sqrt{n\sum_{i > k} \lambda_i^2}\right).
 \end{align*}
 
 The first condition is satisfied whenever $a < \sqrt{c}$ due to Equation \eqref{eq::low noise condition for negative lambda bound}.
 Now consider the second condition. Because of Equation \eqref{eq::negative regularization large noise}, we have
 \begin{align}
 v_\eps\sqrt{n\sum_{i > k} \lambda_i^2} \geq& \|\bar{\theta}_\uptok\|_{\Sigma_\uptok^{-1}}\lambda_{k+1}\sum_{i > k}\lambda_i,\\
 \frac{\Diamond}{a}=\sqrt{\frac{v_\eps}{\|\bar{\theta}_\uptok\|_{\Sigma_\uptok^{-1}}}n\sqrt{n\sum_{i > k} \lambda_i^2}} \geq& \sqrt{n\lambda_{k+1}\sum_{i > k}\lambda_i}\label{eq::large noise increases optimal regularization}.
 \end{align}
 Thus, it is enough to satisfy
 \[
 a\sqrt{n\lambda_{k+1}\sum_{i > k}\lambda_i} \geq 2 c_1\left(n\lambda_{k+1} + \sqrt{n\sum_{i > k} \lambda_i^2}\right).
 \]
 This is exactly the same condition as in the previous case, so it can be reduced to
 \[
 a \geq 2c_1(\rho_k(0)^{-1/2} + 1).
 \]
 
Thus, just as in the small variance case, we see that since $c > \max(1, 16c_1^2)$ then $a = 4c_1$ satisfies both conditions. 

 Take such an $a$. Before plugging $\Diamond$ into Equations \eqref{eq::bias via Diamond first}--\eqref{eq::variance via Diamond}, note the following. Because of Equation \eqref{eq::large noise increases optimal regularization}, we have
 \begin{align*}
 \frac{n\lambda_{k+1}\sum_{i > k}\lambda_i}{\Diamond^2} \leq& \frac{1}{a^2},\\
 \frac{\Diamond^2}{n^2} \geq& a^2\frac{\lambda_{k+1}\sum_{i > k}\lambda_i}{n},
 \end{align*}
 which means that if we take $c_5$ large enough depending on $a$ and $c_3$, then Equations \eqref{eq::bias via Diamond first}--\eqref{eq::variance via Diamond} imply
 \begin{align*}
B/c_5 \leq& \|\bar{\theta}_\ktoinf\|_{\Sigma_\ktoinf}^2
+ \|\bar{\theta}_\uptok\|_{\Sigma_\uptok^{-1}}^2\frac{\Diamond^2}{n^2}, \\
V/c_5 \leq& \frac{k}{n} +  \frac{n\sum_{i > k} \lambda_i^2}{\Diamond^2}. 
 \end{align*}
Now plugging in the expression for $\Diamond$ gives that with probability at least $1 - c_1e^{-n/c_1}$,
 \begin{align*}
 B + v_\eps^2V \leq& c_5\left(\frac{k}{n}v_\eps^2 + (a^{-2} + a^2)v_\eps \|\bar{\theta}_\uptok\|_{\Sigma_\uptok^{-1}}\sqrt{\frac{\sum_{i > k} \lambda_i^2}{n}}  + \|\bar{\theta}_\ktoinf\|_{\Sigma_\ktoinf}^2\right),
 \end{align*}
 which implies the result for $c > \max((a^{-2} + a^2)c_5, c_1)$.
 \end{enumerate}
\end{proof}

\bibliography{benign_overfitting_in_ridge_regression}
\end{document}